\newtheorem{thm}{Theorem}[section]
\newtheorem{cor}[thm]{Corollary}
\newtheorem{claim}[thm]{Claim}
\newtheorem{fact}[thm]{Fact}
\newtheorem{lemma}[thm]{Lemma}
\newtheorem{prop}[thm]{Proposition}
\theoremstyle{definition}
\newtheorem{definition}[thm]{Definition}
\newtheorem{question}[thm]{Question}
\newtheorem{conj}[thm]{Conjecture}
\def\rquotient#1#2{%
	\makeatletter
	\raise.3ex\hbox{$#1$}/\lower.3ex\hbox{$#2$}%
	\makeatother
}	
\newcommand{\subjclass}[2][2010]{%
	\let\@oldtitle\@title%
	\gdef\@title{\@oldtitle\footnotetext{#1 \emph{Mathematics subject classification.} #2}}%
}
\newcommand{\keywords}[1]{%
	\let\@@oldtitle\@title%
	\gdef\@title{\@@oldtitle\footnotetext{\emph{Key words and phrases.} #1.}}%
}
\newcommand{\Address}{{
		\bigskip
		\small
		
\noindent \textsc{Universit\'e de Montpellier\\ 
Institut Montpellierain Alexander Grothendieck\\
Place Eug\`ene Bataillon\\
34090 Montpellier (France)}\par\nopagebreak
\noindent \textit{E-mail address}: \texttt{anthony.genevois@umontpellier.fr}
		
}}
\title{Flat braid groups, right-angled Artin groups, and commensurability}
\date{\today}
\author{Anthony Genevois}
\subjclass{Primary 20F65. Secondary 20F36.}
\keywords{flat braid groups, twin groups, planar braid groups, right-angled Artin groups}
\begin{document}

\maketitle

\begin{abstract}
For every $n\geq 1$, the flat braid group $\mathrm{FB}_n$ is an analogue of the braid group $B_n$ that can be described as the fundamental group of the configuration space
$$\left\{ \{x_1, \ldots, x_n \} \in \mathbb{R}^n / \mathrm{Sym}(n) \mid \text{there exist at most two indices $i,j$ such that } x_i=x_j \right\}.$$
Alternatively, $\mathrm{FB}_n$ can also be described as the right-angled Coxeter group $C(P_{n-2}^\mathrm{opp})$, where $P_{n-2}^\mathrm{opp}$ denotes the opposite graph of the path $P_{n-2}$ of length $n-2$. In this article, we prove that, for every $n= 7$ or $\geq 11$, $\mathrm{PFB}_n$ is not virtually a right-angled Artin group, disproving a conjecture of Naik, Nanda, and Singh. In the opposite direction, we observe that $\mathrm{FB}_7$ turns out to be commensurable to the right-angled Artin group $A(P_4)$.
\end{abstract}

\tableofcontents

\newpage
\section{Introduction}

\noindent
Recall that, given a graph $\Gamma$, the corresponding \emph{right-angled Artin group} is
$$A(\Gamma):= \langle u \text{ vertex of } \Gamma \mid [u,v]=1 \text{ if $u$ and $v$ are adjacent in } \Gamma \rangle,$$
and that the corresponding \emph{right-angled Coxeter group} is
$$C(\Gamma):= \langle u \text{ vertex of } \Gamma \mid u^2=1 \text{ for every $u$, } [u,v]=1 \text{ if $u$ and $v$ are adjacent in } \Gamma \rangle.$$
It is well-known that right-angled Artin and Coxeter groups are tightly connected. Most notably, every right-angled Artin group turns out to be isomorphic to a finite-index subgroup of some right-angled Coxeter group \cite{MR1783167}. Conversely, despite the fact that every right-angled Coxeter group can be virtually described as a subgroup of some right-angled Artin group, right-angled Coxeter groups may not be commensurable to right-angled Artin groups. A simple example is given by $C(C_n)$, where $C_n$ is a cycle of length $n \geq 5$. Indeed, $C(C_n)$ can be described as the reflection group associated to a right-angled $n$-gon in the hyperbolic plane $\mathbb{H}^2$, so $C(C_n)$ is virtually the fundamental group of a closed surface of genus $\geq 2$. However, a right-angled Artin group that does not admit $\mathbb{Z}^2$ as a subgroup is automatically free, so no right-angled Artin group can be commensurable to $C(C_n)$. Consequently, one can think of the family of right-angled Coxeter groups as being strictly larger than the family of right-angled Artin groups. A natural problem, then, is to understand when a group from the bigger family belongs to the smaller family. More precisely:

\begin{question}\label{question:MainQuestion}
Given a graph $\Gamma$, when is the right-angled Coxeter group $C(\Gamma)$ commensurable to a right-angled Artin group?
\end{question}

\noindent
Recall that two groups are (\emph{abstractly}) \emph{commensurable} whenever they contain two isomorphic finite-index subgroups. 

\medskip \noindent
Question~\ref{question:MainQuestion}, and its analogue where ``commensurable'' is replaced with ``quasi-isometric'', is well-known in geometric group theory. Nevertheless, it is poorly understood. Some invariants are available in order to distinguish some right-angled Artin and Coxeter groups, such as divergence and thickness \cite{MR2874959, MR3314816, MR4584587} or Morse boundaries \cite{MR3339446, MR3664526, CounterBehr} and Morse subgroups \cite{MR4464462}. In the other directions, a few constructions are known in order to produce (finite-index) subgroups in right-angled Coxeter groups that are right-angled Artin groups. See for instance \cite{MR1875609}, and most notably \cite{MR4735045} (based on \cite{MR3768473} and further studied in \cite{Visual}). As a concrete but very specific application, it is determined in \cite{MR4735045} precisely when some two-dimensional one-ended right-angled Coxeter groups defined by planar graphs are commensurable to right-angled Artin groups. (These examples are not representative of the general case because they are based on the large-scale geometry of graph manifolds \cite{MR2376814, MR4023336}.) Despite all these results available in the literature, no global picture seems to emerge and an answer to Question~\ref{question:MainQuestion} seems currently to be out of reach in full generality. 

\medskip \noindent
In this article, we focus on a specific family of right-angled Coxeter groups, known as \emph{flat braid groups}. For every $n \geq 1$, the flat braid group $\mathrm{FB}_n$  on $n$ strands is the fundamental group of the configuration space
$$\left\{ \{x_1, \ldots, x_n \} \in \mathbb{R}^n / \mathrm{Sym}(n) \mid \text{there exist at most two indices $i,j$ such that } x_i=x_j \right\}.$$
One can think of an element of $\mathrm{FB}_n$ as a configuration of $n$ arcs in the infinite strip $\mathbb{R} \times [0,1]$ connecting $n$ marked points on each of the parallel lines $\mathbb{R} \times \{1\}$ and $\mathbb{R} \times \{ 0\}$ such that each arc is monotonic and no three arcs intersect at a common point. Two such configurations are considered as equivalent if one can be deformed into the other by a homotopy of such configurations in $\mathbb{R} \times [0,1]$ keeping the end points of the arcs fixed. See Figure~\ref{FlatBraid}. From this description, one can define a natural morphism $\mathrm{FB}_n \to \mathrm{Sym}(n)$, encoding how a flat braid permutes the $n$ strands. The kernel of this morphism is the \emph{pure flat braid group} $\mathrm{PFB}_n$.
\begin{figure}
\begin{center}
\includegraphics[width=0.6\linewidth]{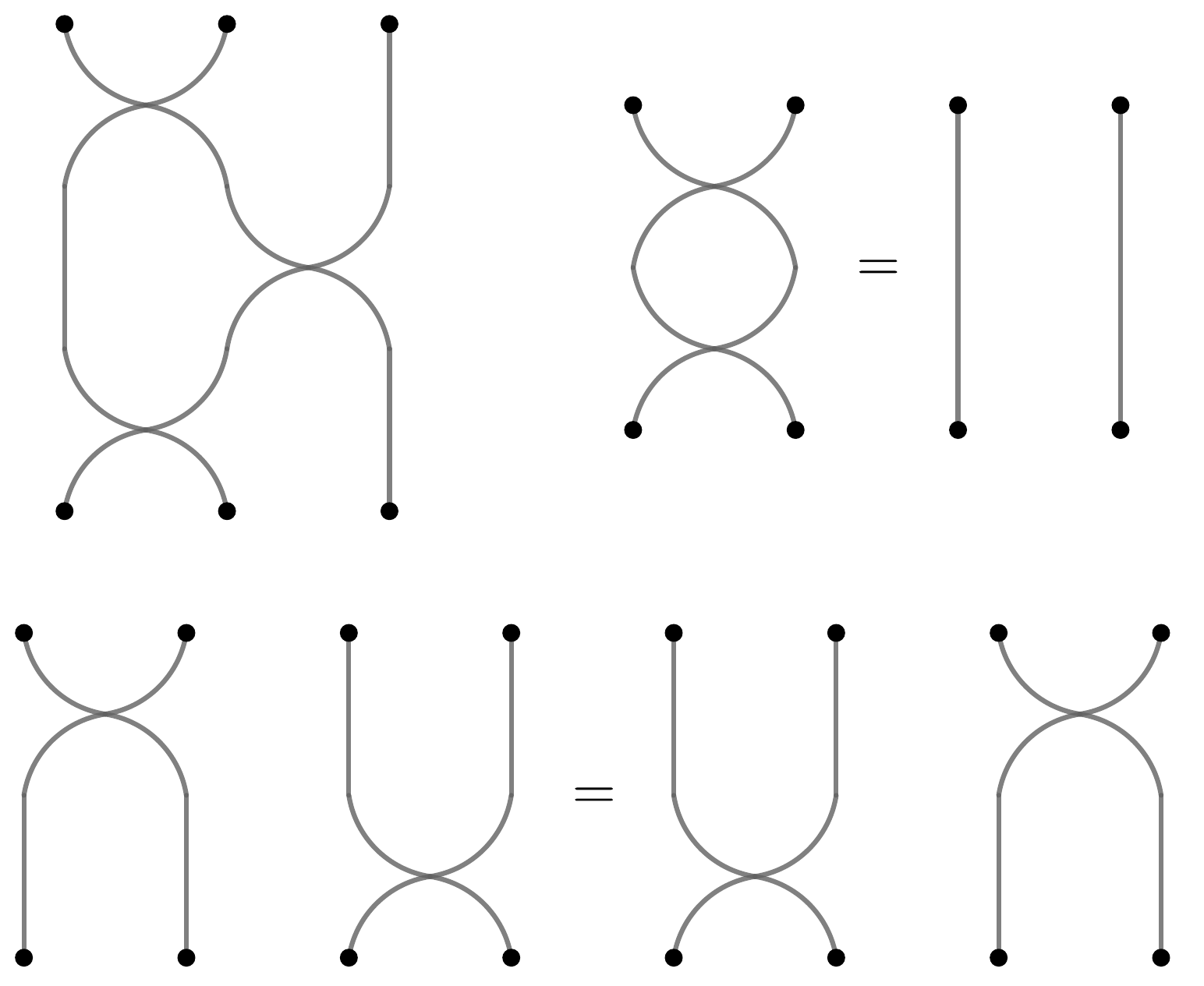}
\caption{A flat braid from $\mathrm{FB}_3$ (namely, $\sigma_1 \sigma_2 \sigma_1$), and the two typical relations between flat braids (namely, $\sigma_1^2=1$ and $\sigma_1\sigma_3= \sigma_3 \sigma_1$.}
\label{FlatBraid}
\end{center}
\end{figure}

\medskip \noindent
If $\sigma_i$ denotes the element of $\mathrm{FB}_n$ that twists the $i$th and $(i+1)$st strands, then it is clear that $\mathrm{FB}_n$ is generated by $\{\sigma_1, \ldots, \sigma_{n-1}\}$. Moreover, $\mathrm{FB}_n$ admits
$$\langle \sigma_1 ,\ldots, \sigma_{n-1} \mid \sigma_i^2=1 \text{ for every } 1 \leq i \leq n-1, \ [\sigma_i,\sigma_j]=1 \text{ whenever } |i-j| \geq 2 \rangle$$
as a presentation. Thus, $\mathrm{FB}_n$ coincides with the right-angled Coxeter group $C(P_{n-2}^\mathrm{opp})$ where $P_{n-2}^\mathrm{opp}$ denotes the opposite graph of the path $\mathrm{P}_{n-2}$ of length $n-2$. 

\medskip \noindent
Flat braid groups have been introduced in the literature under various names, often independently. For instance, one can meet \emph{Grothendieck cartographical groups} in \cite{MR1106916, Flat}; \emph{traid groups} in \cite{MR4035955} for applications to physics; \emph{twin groups} in \cite{MR1386845, MR1370644} in connection with \emph{doodles}; \emph{pseudo-braids groups} in \cite{MR4071367} when studying \emph{diagram groups} (see also \cite{Farley}); \emph{planar braid groups} in \cite{MR4079623, mostovoy}. For notational convenience, we call our groups \emph{flat braid groups} in reference to flat braids from \cite{MR1738392}. 

\medskip \noindent
Several articles available in the literature investigate the algebraic structure of (pure) flat braid groups. See for instance \cite{MR3943376} about presentations and ranks of the commutator subgroups of flat braid groups; \cite{MR4027588} about similar results for pure flat braid groups; \cite{MR4079623} about the algebraic structures of pure flat braid groups on $\leq 6$ strands; \cite{MR4145210} about the conjugacy problem and the structure of automorphism groups of flat braid groups; \cite{MR4192499} about Property $R_\infty$ of flat braid groups; and \cite{MR4701892} about a connection with the so-called \emph{cactus groups} (see also \cite{Cactus}).

\medskip \noindent
Motivated by the fact that pure virtual twin groups are right-angled Artin groups \cite{MR4651964} and by the structure of pure flat braid groups on $\leq 6$ strands \cite{MR4079623}, it has been conjectured that:

\begin{conj}[\cite{MR4739232}]\label{conj}
For every $n \geq 3$, $\mathrm{PFB}_n$ is a right-angled Artin group. 
\end{conj}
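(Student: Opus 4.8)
The statement displayed above is a \emph{conjecture}, and the point of this article is precisely to refute it: as announced in the abstract, $\mathrm{PFB}_n$ fails to be virtually a right-angled Artin group for $n=7$ and for $n \geq 11$. So my plan is not to prove Conjecture~\ref{conj} but to disprove it. The first reduction is to replace the pure group by the full one. Since $\mathrm{PFB}_n$ is the kernel of the finite quotient $\mathrm{FB}_n \to \mathrm{Sym}(n)$, the groups $\mathrm{PFB}_n$ and $\mathrm{FB}_n$ are commensurable, and the property of admitting a finite-index right-angled Artin subgroup is insensitive to this: a finite-index right-angled Artin subgroup of one is automatically a finite-index subgroup of the other. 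Everything therefore reduces to showing that the right-angled Coxeter group $\mathrm{FB}_n = C(P_{n-2}^{\mathrm{opp}})$ has no finite-index subgroup isomorphic to a right-angled Artin group.

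For the range $n \geq 11$ I would first try the coarse route, in the spirit of the treatment of $C(C_n)$ recalled in the introduction. The idea is to compute a quasi-isometry invariant of $C(P_{n-2}^{\mathrm{opp}})$ --- divergence or thickness --- and compare it with the known behaviour of these invariants for right-angled Artin groups (Behrstock--Charney for divergence, Behrstock--Dru\c{t}u--Mosher for thickness). One-ended right-angled Artin groups are thick of order one and have at most quadratic divergence, so if one can show that the induced-subgraph structure of $P_{n-2}^{\mathrm{opp}}$ becomes, for $n \geq 11$, rich enough to force thickness of higher order and hence super-quadratic divergence, then $\mathrm{FB}_n$ is not even quasi-isometric to a right-angled Artin group, a fortiori not virtually one. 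This coarse invariant is exactly what localises the phenomenon to large $n$, and it should leave $n=7$ (and the small unsettled cases $n=8,9,10$) untouched.

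The genuine obstacle is $n=7$, and here the coarse route is hopeless: the paper itself records that $\mathrm{FB}_7$ is commensurable --- hence quasi-isometric --- to $A(P_4)$, so \emph{every} commensurability invariant (divergence, thickness, virtual cohomological dimension, $\ell^2$-Betti numbers, Morse boundary) agrees with that of a right-angled Artin group. Any successful argument must therefore detect the gap between ``commensurable to a right-angled Artin group'' and ``virtually a right-angled Artin group'', a gap invisible to every quasi-isometry or commensurability invariant. The cleanest candidate I would pursue is homological: the integral (co)homology of a right-angled Artin group is torsion-free (the cohomology ring of $A(\Lambda)$ is the exterior face ring of the flag complex of $\Lambda$), so it would suffice to prove that \emph{every} finite-index subgroup of $\mathrm{FB}_7$ carries torsion in some integral homology group. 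Crucially, such torsion is \emph{not} a commensurability invariant, so this is logically compatible with $\mathrm{FB}_7 \sim A(P_4)$: the group $A(P_4)$ is commensurable to, but not a subgroup of, $\mathrm{FB}_7$, and the subgroup they share is a (non-Droms, hence possibly non-right-angled-Artin) finite-index subgroup of $A(P_4)$.

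The hard part will be controlling this torsion \emph{uniformly} over the entire finite-index subgroup lattice rather than for a single cover, and indeed establishing that it is always present is where I expect the real difficulty to lie. To make it tractable I would try to isolate a single induced subgraph of $P_5^{\mathrm{opp}}$ (the complement of the six-vertex path) responsible for the pathology and prove a local-to-global statement: that the corresponding special subgroup obstructs a right-angled Artin structure in every finite cover of the Davis complex. Should a purely homological argument prove too fragile, the fallback would be a rigidity statement for geometric actions on $\mathrm{CAT}(0)$ cube complexes --- showing that any right-angled Artin group virtually isomorphic to $\mathrm{FB}_7$ would have to act on the Davis complex with a hyperplane-and-flat pattern that the canonical action of $\mathrm{FB}_7$ provably does not admit up to finite index. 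Either way, the crux of the whole disproof, and the step I expect to resist a routine treatment, is the production of a finite-index-sensitive invariant sharp enough to separate $\mathrm{FB}_7$ from the very right-angled Artin groups it is nonetheless commensurable to.
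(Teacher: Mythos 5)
You correctly read the statement as a conjecture that the paper refutes, and you correctly diagnose that no commensurability or quasi-isometry invariant can work at $n=7$, since $\mathrm{FB}_7$ is commensurable to $A(P_4)$. But the proposal stops exactly where the proof has to start: you name the need for ``a finite-index-sensitive invariant sharp enough to separate $\mathrm{FB}_7$ from the right-angled Artin groups it is commensurable to'' without producing one, and that invariant is the paper's entire contribution. Concretely, the paper first uses the tubular graph-of-groups decomposition of $\mathrm{FB}_7$ to show it contains no $\mathbb{Z}^3$, no $\mathbb{F}_2\times\mathbb{F}_2$, and no closed surface group of genus $\geq 2$ (Lemma~\ref{lem:NoSubgroup}), so any right-angled Artin group of finite index in $\mathrm{PFB}_7$ would be $A(T)$ for a tree $T$; it then introduces the \emph{thick subgroup} $\mathrm{Thick}(G)$, generated by the centralisers of all elements whose centralisers are not virtually abelian, and shows that $\mathrm{Thick}(A(T))=A(T)$ (Lemma~\ref{lem:ModThickRAAG}) while $\mathrm{PFB}_7/\mathrm{Thick}(\mathrm{PFB}_7)$ is an infinite Coxeter quotient (Lemma~\ref{lem:ModThickFB}), a contradiction. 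Your candidate obstructions (integral torsion in every finite cover, cube-complex rigidity) are not developed, and you give no evidence they would succeed.

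Two further steps in your plan are wrong or overshoot. First, the reduction ``it suffices to show $\mathrm{FB}_n$ has no finite-index right-angled Artin subgroup'' is logically sufficient but replaces the target with a strictly stronger statement that the paper explicitly leaves open (Question~\ref{QuestionOne}): a finite-index right-angled Artin subgroup of $\mathrm{FB}_n$ need not lie inside $\mathrm{PFB}_n$, so the two properties are not interchangeable, and you would be committing yourself to an open problem. Second, for $n\geq 11$ the divergence/thickness route is doubtful: the complement of a long path contains many induced squares, so these Coxeter groups plausibly have quadratic divergence and are coarsely indistinguishable from one-ended right-angled Artin groups; in any case you supply no computation. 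The paper instead reduces $n\geq 11$ to $n=7$ by exhibiting $\mathrm{PFB}_7\times\mathrm{PFB}_{n-7}$ as a maximal product subgroup and proving, via IMC generating sets and almost stable centralisers (Lemma~\ref{lem:IMCandProduct} and Proposition~\ref{prop:MorphismProduct}), that a finite-index right-angled Artin subgroup would force $\mathrm{PFB}_7$ itself to be virtually a right-angled Artin group. Both the thick-subgroup obstruction and this propagation step are absent from your proposal, so there is a genuine gap at the heart of each half of the argument.
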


\noindent
In this article, we disprove this conjecture by showing that most pure flat braid groups are not right-angled Artin groups, even up to a finite index.

\begin{thm}\label{thm:MainThm}
For every $n=7$ or $\geq 11$, the pure flat braid group $\mathrm{PFB}_n$ is not virtually a right-angled Artin group.
\end{thm}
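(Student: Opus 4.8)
My plan is to stay entirely inside the $\mathrm{CAT}(0)$ cube complex geometry of the right-angled Coxeter group $\mathrm{FB}_n = C(P_{n-2}^{\mathrm{opp}})$ and to reduce the statement to a combinatorial incompatibility carried by the defining graph $\Lambda := P_{n-2}^{\mathrm{opp}}$. The first point is that a finite-index subgroup of $\mathrm{PFB}_n$ is also finite index in $\mathrm{FB}_n$, so it suffices to rule out that any finite-index subgroup $H \leq \mathrm{FB}_n$ is a right-angled Artin group. Being torsion-free of finite index, such an $H$ acts freely and cocompactly on the Davis complex $X$ of $\mathrm{FB}_n$ (equivalently on the associated quasi-median graph $\QM$), so $X/H$ is a compact special cube complex with $\pi_1 = H$; moreover the reflection structure of $\mathrm{FB}_n$ equips every hyperplane of $X$ with an involution swapping its sides, and the way these walls cross and osculate is dictated entirely by $\Lambda$. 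Before using this I would record the combinatorics of $\Lambda$: for $n \geq 5$ it is connected and not a join (its complement, a path, is connected), its maximal cliques are the maximal ``spread-out'' sets $\{i_1 < \cdots < i_r\}$ with consecutive gaps $\geq 2$, and — crucially — it has no induced cycle of length $\geq 5$ (a set of $k$ integers can never have its ``differ-by-one'' graph equal to a single $k$-cycle, as that graph is always a disjoint union of paths), while it is rich in induced squares $\{i,i{+}1,j,j{+}1\}$ with $j-i \geq 3$.

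A key structural observation guides the whole argument: for $n=7$ the group $\mathrm{FB}_7$ is commensurable to a right-angled Artin group, so no quasi-isometry or commensurability invariant (divergence, thickness, Morse boundary, $\ell^2$-Betti numbers) can possibly detect the failure — indeed $\mathrm{FB}_n$ is one-ended with quadratic divergence across the whole range, its square-rich graph keeping it thick of order one. The obstruction must therefore be strictly finer than the commensurability type, and one must genuinely rule out \emph{every} finite-index right-angled Artin subgroup. This is where I would invoke the machinery built in the earlier sections: translate, via the crossing graph $\crossing$ and the separation data $\ST$, the statement ``$H$ is a right-angled Artin group'' into the existence of a coherent orientation and labelling of the hyperplane-classes of $X/H$ that is compatible with the crossing-and-osculation pattern imposed by $\Lambda$ — the combinatorial shadow of $X/H$ being homotopy equivalent to a Salvetti complex. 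I would then show that the configuration of pairwise-crossing and osculating square-classes produced by the induced squares of $P_{n-2}^{\mathrm{opp}}$ carries a cyclic ``frustration'' that admits no such coherent labelling, and that this frustrating configuration is present precisely for $n=7$ and for $n \geq 11$ (and is the reason the intermediate values $n \in \{8,9,10\}$ fall outside the reach of the method).

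The main obstacle, as the dichotomy in the statement already signals, is the case $n=7$. Because $\mathrm{FB}_7$ is commensurable to a right-angled Artin group, it is not enough to exhibit the frustration in the cube complex $X/\mathrm{PFB}_7$ itself: one must prove that it cannot be resolved in \emph{any} finite cover, i.e.\ that no finite cover of $X/\mathrm{PFB}_7$ is homotopy equivalent to a Salvetti complex. Controlling the hyperplane combinatorics simultaneously across all finite covers — rather than in a single quotient — is the genuinely hard step, and it is here that I expect the quasi-median machinery ($\QM$, $\ST$, $\crossing$) and the rigidity of the reflection walls to do the essential work; the range $n \geq 11$ should then follow from the same criterion once the frustrating configuration is shown to persist and proliferate as $n$ grows.
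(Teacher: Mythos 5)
Your proposal correctly identifies the central difficulty -- that $\mathrm{FB}_7$ (hence $\mathrm{PFB}_7$) is commensurable to a right-angled Artin group, so no commensurability or quasi-isometry invariant can work and one must genuinely exclude every finite-index RAAG subgroup -- but it does not supply the mechanism that does the exclusion. The step you yourself flag as ``the genuinely hard step'' (a cyclic ``frustration'' in hyperplane labellings that persists in all finite covers) is never made concrete: no invariant is defined, no configuration is exhibited, and no argument is given for why it would obstruct a Salvetti-complex structure in every finite cover rather than in a single quotient. This is precisely where the paper's proof has actual content: it introduces the \emph{thick subgroup} $\mathrm{Thick}(G)$ (generated by centralisers of elements with non-virtually-abelian centraliser), which is monotone under inclusion and satisfies $\mathrm{Thick}(A(T))=A(T)$ for a tree $T$ (Lemma~\ref{lem:ModThickRAAG}) while $\mathrm{Thick}(\mathrm{PFB}_7)$ has infinite index, since $\mathrm{FB}_7/\mathrm{Thick}(\mathrm{PFB}_7)$ is an explicit infinite Coxeter group (Lemma~\ref{lem:ModThickFB}); combined with the tubular decomposition forcing any RAAG subgroup to be $A(\text{forest})$ (Lemma~\ref{lem:NoSubgroup}, Corollary~\ref{cor:SubRAAG}), this settles $n=7$. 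Nothing in your sketch plays the role of this invariant.

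Two further points. First, your opening reduction -- ``it suffices to rule out that any finite-index subgroup $H\leq\mathrm{FB}_n$ is a right-angled Artin group'' -- replaces the theorem by a strictly stronger statement that the paper explicitly leaves open (Question~\ref{QuestionOne}). The paper's argument genuinely uses containment in $\mathrm{PFB}_7$: the thick elements of $\mathrm{FB}_7$ are the conjugates of powers of $\sigma_1\sigma_2$ and $\sigma_5\sigma_6$, and their centralisers generate all of $\mathrm{FB}_7$, so $\mathrm{Thick}(\mathrm{FB}_7)=\mathrm{FB}_7$ and the obstruction vanishes if you discard the hypothesis $H\leq\mathrm{PFB}_7$. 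Second, the case $n\geq 11$ is not ``the same criterion proliferating'': the paper reduces it to $n=7$ by showing that $\mathrm{PFB}_7\times\mathrm{PFB}_{n-7}$ is a maximal product subgroup of $\mathrm{PFB}_n$ and that, via the IMC-generating-set rigidity of Lemma~\ref{lem:IMCandProduct} and Proposition~\ref{prop:MorphismProduct}, a finite-index RAAG in $\mathrm{PFB}_n$ would force $\mathrm{PFB}_7$ to be virtually a RAAG; this also explains why $n\in\{8,9,10\}$ is excluded (the complementary factor $\mathrm{PFB}_{n-7}$ is too small for the product machinery), for a reason unrelated to the one you conjecture. As it stands, the proposal is a plausible research programme, not a proof.
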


\noindent
Despite the fact that Conjecture~\ref{conj} is false, we would like to emphasize that the following (vague) question is still interesting: to which extent does a (pure) flat braid group look like a right-angled Artin group?  For instance, Theorem~\ref{thm:MainThm} shows that $\mathrm{FB}_n$ does not contain a right-angled Artin group of finite index in $\mathrm{PFB}_n$, but is there such a subgroup elsewhere in $\mathrm{FB}_n$? More precisely:

\begin{question}\label{QuestionOne}
Is $\mathrm{FB}_n$ virtually a right-angled Artin group? 
\end{question} 

\noindent
For instance, a natural candidate to check would be the commutator subgroup of $\mathrm{FB}_n$, which is torsion-free. In case of a negative answer to Question~\ref{QuestionOne}, it would be natural to ask whether $\mathrm{FB}_n$ contains a finite-index subgroup that, despite not being a right-angled Artin group, turns out to be isomorphic to a finite-index subgroup of some right-angled Artin groups. In other words:

\begin{question}\label{QuestionTwo}
Is $\mathrm{FB}_n$ commensurable (or at least quasi-isometric) to a right-angled Artin group?
\end{question}

\noindent
Since, as already said, flat braid groups are right-angled Coxeter groups, Question~\ref{QuestionTwo} is a particular case of Question~\ref{question:MainQuestion}. Perhaps surprisingly, in view on Theorem~\ref{thm:MainThm}, we answer positively Question~\ref{QuestionTwo} for $n=7$. 

\begin{thm}\label{thm:Comm}
The flat braid group $\mathrm{FB}_7$ is commensurable to the right-angled Artin group $A(P_4)$. 
\end{thm}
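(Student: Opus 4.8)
The plan is to place $\mathrm{FB}_7 = C(P_5^{\mathrm{opp}})$ within the scope of the commensurability criterion of \cite{MR4735045} for two-dimensional one-ended right-angled Coxeter groups defined by planar graphs, and then to read off from that criterion that the commensurable right-angled Artin group is $A(P_4)$. Accordingly, the argument splits into two parts: first, verifying that $P_5^{\mathrm{opp}}$ (the graph $K_6$ with the Hamiltonian path $1{-}2{-}\cdots{-}6$ removed) satisfies the three standing hypotheses of the criterion; second, computing the commensurability class it produces.

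For the hypotheses, recall that the edges of $P_5^{\mathrm{opp}}$ are the pairs $\{i,j\}$ with $|i-j|\geq 2$, so its non-edges are exactly the ``consecutive'' pairs, which assemble into the complementary path on $1,\dots,6$. First I would check planarity: a partition of $\{1,\dots,6\}$ into two triples inducing all nine cross-edges would force every consecutive pair to lie inside a single part, which is impossible since the complementary path is connected; hence $P_5^{\mathrm{opp}}$ has no $K_{3,3}$ subgraph, and as its clique number is $3$ it has no $K_5$ either. Since only the two vertices $1$ and $6$ have degree exceeding $3$, there is no room for subdivision vertices, so no subdivision of $K_5$ or $K_{3,3}$ occurs and the graph is planar. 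Second, two-dimensionality: the flag complex $L$ of $P_5^{\mathrm{opp}}$ has $6$ vertices, $10$ edges, and the $4$ triangles $\{1,3,5\},\{1,3,6\},\{1,4,6\},\{2,4,6\}$; these glue in a line along the edges $\{1,3\},\{1,6\},\{4,6\}$ into a contractible strip, to which the single edge $\{2,5\}$ (the unique edge lying in no triangle) is attached as a chord, so $L$ deformation retracts onto a circle. In particular no full subcomplex of $L$ carries nontrivial reduced second cohomology, and Davis's formula gives $\mathrm{vcd}\, C(P_5^{\mathrm{opp}})=2$. Third, one-endedness: $P_5^{\mathrm{opp}}$ is connected and a direct inspection shows that it admits no separating complete subgraph (it is $2$-connected, and deleting any edge or any of the four triangles leaves a connected graph), whence $C(P_5^{\mathrm{opp}})$ is one-ended.

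It remains to identify the commensurability class. Here the key local data are the induced $4$-cycles of $P_5^{\mathrm{opp}}$, namely $S_1=\{1,2,4,5\}$, $S_2=\{1,2,5,6\}$, and $S_3=\{2,3,5,6\}$, each spanning a flat piece $D_\infty\times D_\infty$ (virtually $\mathbb{Z}^2$); all three share the edge $\{2,5\}$, which is precisely the chord producing the loop detected in $L$. I expect the criterion of \cite{MR4735045} to recognise $C(P_5^{\mathrm{opp}})$ as a tree of flats (a graph-manifold-type group), the regime in which commensurability with a right-angled Artin group holds, and to reconstruct that group from the adjacency pattern of $S_1,S_2,S_3$, yielding $A(P_4)$. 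As a consistency check, the virtual Euler characteristics agree: $\chi\big(C(P_5^{\mathrm{opp}})\big)=1-\tfrac{6}{2}+\tfrac{10}{4}-\tfrac{4}{8}=0=1-5+4=\chi\big(A(P_4)\big)$, as is forced for commensurable groups. The main obstacle I anticipate is exactly this final identification: pinning down that the class is $A(P_4)$, rather than the right-angled Artin group on some other tree, requires translating the combinatorics of the three overlapping flats into the invariants of the relevant graph-manifold/tree-RAAG commensurability classification, and it is here that the bookkeeping between the Coxeter pieces $D_\infty\times D_\infty$ and $D_\infty$ and the Artin pieces $\mathbb{Z}^2$ and $\mathbb{Z}$ must be carried out with care.
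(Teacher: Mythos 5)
Your strategy has a genuine gap at its foundation: the criterion of \cite{MR4735045} that you want to invoke is a statement about \emph{two-dimensional} right-angled Coxeter groups, meaning those whose defining graph is triangle-free (equivalently, whose Davis complex is a square complex). The graph $P_5^{\mathrm{opp}}$ is not triangle-free --- it contains, for instance, the triangle $\{\sigma_1,\sigma_3,\sigma_5\}$ (indeed four triangles, as your own computation of the flag complex shows) --- so $\mathrm{FB}_7$ contains $\mathbb{Z}_2^3$ and its Davis complex is three-dimensional. Your substitute observation that $\mathrm{vcd}=2$ is not the hypothesis that the machinery of \cite{MR4735045} (and of \cite{MR3768473} on which it builds) actually uses: those arguments are carried out by explicit constructions inside the two-dimensional Davis square complex and do not apply here. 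The sentence in the introduction of the paper describing \cite{MR4735045} as treating ``some two-dimensional one-ended right-angled Coxeter groups defined by planar graphs'' is pointing to a different family of examples, not offering a tool that covers flat braid groups. Relatedly, the local structure is not the ``tree of flats'' you describe: besides the three induced $4$-cycles you list, $P_5^{\mathrm{opp}}$ has the maximal joins $\{\sigma_1,\sigma_2\}\ast\{\sigma_4,\sigma_5,\sigma_6\}$ and $\{\sigma_1,\sigma_2,\sigma_3\}\ast\{\sigma_5,\sigma_6\}$, giving Seifert-type pieces commensurable to $\mathbb{Z}\times F_2$, not just $\mathbb{Z}^2$ flats.

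The second gap is the one you flag yourself, and it is not a bookkeeping issue but the heart of the matter: even granting a graph-manifold-type structure, identifying the commensurability class as that of $A(P_4)$ specifically requires real work, since commensurability of tree right-angled Artin groups (and of graph manifold groups generally) is strictly finer than quasi-isometry. The paper sidesteps all of this by an explicit construction: it exhibits an index-$8$ normal subgroup $\langle \sigma_1\sigma_2,\sigma_3\sigma_4,\sigma_5\sigma_6\rangle$ of $\mathrm{FB}_7$ isomorphic to an index-$2$ subgroup of the one-relator group $\langle a,t\mid [a,tat^{-1}]=1\rangle$ (via a graph-of-groups decomposition and a ping-pong argument in the quasi-median graph), realises both that group and $A(P_4)$ as fundamental groups of concrete compact flip $3$-manifolds $M_1$ and $M_2$, and then builds an explicit common $4$-sheeted cover $M_0\to M_1,M_2$. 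Some version of that explicit covering-space argument (or an equally concrete isomorphism of finite-index subgroups) is what your proposal is missing.
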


\noindent
In order to prove Theorem~\ref{thm:Comm}, we exploit the observation that $\mathrm{FB}_7$ and $A(P_4)$ are both (virtually) fundamental groups of compact flip manifolds. It is known by \cite{MR2376814} that right-angled Artin groups defined by finite trees can be described as fundamental groups of such $3$-manifolds. For $\mathrm{FB}_7$, we start by proving that $\mathrm{FB}_7$ contains an index-$8$ normal subgroup isomorphic to an index-$2$ subgroup of $\langle a,t \mid [a,tat^{-1}]=1 \rangle$ (see Section~\ref{section:Lampraag}). This group can be easily described as the fundamental group of a compact flip manifold $M_2$. Given another such $3$-manifold $M_1$ whose fundamental group is isomorphic to $A(P_4)$, we construct a common finite-sheeted cover $M_0 \to M_1,M_2$ (see Section~\ref{section:Flip}), which allows us to deduce Theorem~\ref{thm:Comm}.

\paragraph{About the proof of Theorem~\ref{thm:MainThm}.} The first step is to prove the theorem for the flat braid group on seven strands, namely:

\begin{thm}\label{thm:FirstPart}
The flat braid group $\mathrm{PFB}_7$ is not virtually a right-angled Artin group.
\end{thm}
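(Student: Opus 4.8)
The plan is to argue by contradiction: suppose some finite-index subgroup $H \leq \mathrm{PFB}_7$ is a right-angled Artin group $A(\Gamma)$. First I would constrain $\Gamma$ as tightly as possible. The group $\mathrm{PFB}_7$ is torsion-free (in the right-angled Coxeter group $\mathrm{FB}_7$ every torsion element is conjugate into a finite special subgroup generated by pairwise commuting involutions, whose image in $\mathrm{Sym}(7)$ is a group of disjoint transpositions onto which it injects, so the kernel $\mathrm{PFB}_7$ is torsion-free), and, as established in Section~\ref{section:Lampraag}, it is commensurable with $G_0 := \langle a, t \mid [a, tat^{-1}] = 1\rangle$, the fundamental group of a graph manifold. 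In particular $\mathrm{PFB}_7$ is one-ended and of cohomological dimension two, whence $H$ is a one-ended, two-dimensional right-angled Artin group, forcing $\Gamma$ to be connected and triangle-free. Moreover $H$ is commensurable with a $3$-manifold group and, being torsion-free of cohomological dimension two, is itself the fundamental group of a compact aspherical $3$-manifold; by Droms' characterization of the right-angled Artin groups that are $3$-manifold groups, $\Gamma$ must then be a tree. Thus $H$ is the fundamental group of a flip manifold over a tree, whose canonical (JSJ) decomposition has a tree as underlying graph.

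Next I would feed in the concrete model $G_0$. Writing $b = tat^{-1}$, the flat $\langle a, b \rangle \cong \mathbb{Z}^2$ is glued to itself over $\mathbb{Z}$ by an HNN extension whose two edge-inclusions hit the two coordinate directions, that is, by a flip; this loop yields the epimorphism $\rho \colon G_0 \to \mathbb{Z}$, $a \mapsto 0$, $t \mapsto 1$, whose kernel is the right-angled Artin group on a bi-infinite path and is therefore infinitely generated. The point I would extract is a stability statement: every finite-index subgroup of $G_0$ still retracts onto $\mathbb{Z}$, via the restriction of $\rho$, with infinitely generated kernel, so its canonical decomposition again exhibits a flat glued to itself by a flip.

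The hard part will be to upgrade this into a genuinely finite-index-stable obstruction. No quasi-isometry or commensurability invariant can possibly work, since $\mathrm{PFB}_7$, being of finite index in $\mathrm{FB}_7$, is commensurable with the right-angled Artin group $A(P_4)$ by Theorem~\ref{thm:Comm}; what has to be detected is the presence of a maximal flat glued to itself by a flip. Concretely, I would follow the monodromy that the HNN loop induces on the homology of the $\mathbb{Z}^2$-piece, namely the coordinate swap $(x,y) \mapsto (y,x)$, and use the compatibility of the JSJ decomposition with finite covers to show that every finite cover of the manifold modelling $\mathrm{PFB}_7$ retains a JSJ edge whose two sides lie over a common flat piece with the fibre and base directions interchanged. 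Comparing this with the flip data allowed by a flip manifold over a tree, where the tree structure prevents a flat from being glued to itself and imposes a globally coherent fibre direction, should yield the contradiction. The delicate, and decisive, point is to make this invariant survive passage to finite-index subgroups, its survival under passage to finite-index overgroups being automatic, and this is where I expect the main difficulty to lie.
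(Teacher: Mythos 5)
The first half of your plan --- forcing $\Gamma$ to be a tree --- is broadly in the right spirit, but the step ``$H$ is commensurable with a $3$-manifold group and torsion-free of cohomological dimension two, hence is itself the fundamental group of a compact aspherical $3$-manifold'' is not justified: commensurability only hands you a finite-index subgroup of $H$ that is a $3$-manifold group, not $H$ itself. The paper reaches the same conclusion more directly: the tubular decomposition of $\mathrm{FB}_7$ as a graph of virtually $\mathbb{Z}$ or $\mathbb{Z}^2$ groups shows it contains no $\mathbb{Z}^3$, no $\mathbb{F}_2\times\mathbb{F}_2$, and no closed surface group of genus $\geq 2$ (Lemma~\ref{lem:NoSubgroup}), whence $\Gamma$ is a forest, and a tree by one-endedness (Corollary~\ref{cor:SubRAAG}). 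This part is repairable.

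The genuine gap is in the second half, and it is not merely the ``delicate point'' you flag --- it is fatal as proposed. Your obstruction is the persistence, in every finite cover, of a JSJ edge whose two ends return to the same flat piece with fibre and base interchanged. But a loop in the underlying graph of a JSJ (or graph-of-groups) decomposition unwraps in finite covers, so ``a piece glued to itself'' is \emph{not} inherited by finite-index subgroups: the manifold $M_0$ of Section~\ref{section:Flip} is a finite-sheeted cover of the self-glued model $M_1$ of $A(\mathbb{L})\rtimes\mathbb{Z}$ which simultaneously covers $M_2$, a flip manifold over a path with $\pi_1(M_2)\simeq A(P_4)$; its JSJ graph therefore has no self-loop and carries no flip data beyond what a tree RAAG already exhibits. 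More structurally, you correctly observe that no commensurability invariant can distinguish $\mathrm{PFB}_7$ from $A(P_4)$, yet an invariant that descends to finite-index subgroups (your ``decisive point'') and ascends to finite-index overgroups (``automatic,'' as you say) \emph{is} a commensurability invariant; so the stability you hope to prove is impossible, not just difficult. What is needed is a relative invariant exploiting the actual inclusion $A(T)\leq\mathrm{PFB}_7$ rather than the commensurability class. This is what the paper does: it introduces the thick subgroup $\mathrm{Thick}(G)$, generated by the centralisers of all elements whose centralisers are not virtually abelian, proves $\mathrm{Thick}(A(T))=A(T)$ for every tree $T$ with at least three vertices (Lemma~\ref{lem:ModThickRAAG}) while $\mathrm{Thick}(\mathrm{PFB}_7)$ has infinite index because $\mathrm{FB}_7/\mathrm{Thick}(\mathrm{PFB}_7)$ is an infinite Coxeter group (Lemma~\ref{lem:ModThickFB}), and concludes via the monotonicity $\mathrm{Thick}(A(T))\leq\mathrm{Thick}(\mathrm{PFB}_7)$ that no such $A(T)$ can sit inside $\mathrm{PFB}_7$ with finite index. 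You would need to replace your JSJ monodromy argument with an invariant of this one-directional, relative kind.
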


\noindent
This is the smallest number of strands for which the statement holds (see \cite{MR4145210}). Theorem~\ref{thm:FirstPart} is proved as follows. First, we observe that, since $\mathrm{FB}_7$ does not contain a subgroup isomorphic to $\mathbb{Z}^3$, to $\mathbb{F}_2 \times \mathbb{F}_2$, or to the fundamental group of a closed surface of genus $\geq 2$ (Lemma~\ref{lem:NoSubgroup}), every right-angled Artin group that appears as a subgroup of $\mathrm{FB}_7$ is defined by a forest (Corollary~\ref{cor:SubRAAG}). As a consequence, it suffices to show that $\mathrm{PFB}_7$ does not contain as a finite-index subgroup a right-angled Artin group $A(T)$ defined by a finite tree $T$. For this, we define the \emph{thick subgroup} $\mathrm{Thick}(G)$ of a group $G$ as the subgroup generated by the centralisers of all the elements whose centralisers are not virtually abelian. Then, by showing that $\mathrm{Thick}(A(T)) = A(T)$ (Lemma~\ref{lem:ModThickRAAG}) but that $\mathrm{Thick}(\mathrm{PFB}_7)$ has infinite index in $\mathrm{PFB}_7$ (Lemma~\ref{lem:ModThickFB}), we conclude that $A(T)$ cannot be a finite-index subgroup of $\mathrm{PFB}_7$. 

\medskip \noindent
Once Theorem~\ref{thm:FirstPart} is proved, it is not so difficult to deduce that $\mathrm{PFB}_n$ is not a right-angled Artin group for $n \geq 11$. The trick is that $\mathrm{PFB}_n$ contains a natural copy of $\mathrm{PFB}_7 \times \mathrm{PFB}_{n-7}$, which turns out to be a maximal product subgroup. But, in a right-angled Artin group, maximal product subgroups are well-understood; their factors, in particular, are also right-angled Artin groups. Since this product decomposition is unique (Corollary~\ref{cor:UniqueFactor}), it follows that, if $\mathrm{PFB}_n$ were a right-angled Artin group, then $\mathrm{PBF}_7$ would be a right-angled Artin group as well, contradicting Theorem~\ref{thm:FirstPart}. 

\medskip \noindent
In order to prove that $\mathrm{PFB}_n$ is not virtually a right-angled Artin group, the strategy is basically the same, but some technicality is required. In Section~\ref{section:IMC}, we introduce the notion of \emph{IMC generating sets}, which is of independent interest, and we prove that, under the assumption that our groups admit IMC generating sets, factors of finite-index subgroups in products must be contained in factors of the whole product (Lemma~\ref{lem:IMCandProduct}).

\paragraph{Acknowledgements.} I am grateful to Hoel Queffelec for his comments on a preliminary version of this paper, and to P. Bellingeri and N. Nanda for interesting discussions regarding flat braid groups.

\section{Preliminaries on graph products}\label{section:GP}

\noindent
Let $\Gamma$ be a graph and $\mathcal{G}= \{ G_u \mid u \in \Gamma \}$ a collection of groups indexed by the vertices of $\Gamma$. The \emph{graph product} $\Gamma \mathcal{G}$ is 
$$\langle G_u \ (u\in \Gamma) \mid [G_u,G_v]=1 \ (\{u,v\} \in E(\Gamma)) \rangle$$
where $E(\Gamma)$ denotes the edge-set of $\Gamma$ and where $[G_u,G_v]=1$ is a shorthand for $[g,h]=1$ for all $g \in G_u$, $h \in G_v$. The groups of $\mathcal{G}$ are referred to as the \emph{vertex-groups}. 

\medskip \noindent
Graph products of groups will allow us to state and prove results simultaneously about right-angled Artin groups (i.e.\ when vertex-groups are infinite cyclic) and about right-angled Coxeter groups (i.e.\ when vertex-groups are cyclic of order two).

\medskip \noindent
\textbf{Convention.} We always assume that the groups in $\mathcal{G}$ are non-trivial. Notice that it is not a restrictive assumption, since a graph product with some trivial factors can be described as a graph product over a smaller graph all of whose factors are non-trivial.

\medskip \noindent
A \emph{word} in $\Gamma \mathcal{G}$ is a product $g_1 \cdots g_n$ where $n \geq 0$ and where, for every $1 \leq i \leq n$, $g_i \in G$ for some $G \in \mathcal{G}$; the $g_i$'s are the \emph{syllables} of the word, and $n$ is the \emph{length} of the word. Clearly, the following operations on a word does not modify the element of $\Gamma \mathcal{G}$ it represents:
\begin{description}
	\item[Cancellation:] delete the syllable $g_i$ if $g_i=1$;
	\item[Amalgamation:] if $g_i,g_{i+1} \in G$ for some $G \in \mathcal{G}$, replace the two syllables $g_i$ and $g_{i+1}$ by the single syllable $g_ig_{i+1} \in G$;
	\item[Shuffling:] if $g_i$ and $g_{i+1}$ belong to two adjacent vertex-groups, switch them.
\end{description}
A word is \emph{graphically reduced} if its length cannot be shortened by applying these elementary moves. Every element of $\Gamma \mathcal{G}$ can be represented by a graphically reduced word, and this word is unique up to the shuffling operation. This allows us to define the \emph{support} of an element as the subgraph of $\Gamma$ induced by the vertices labelling the syllables of a graphically reduced word representing our element. Similarly, a word is \emph{graphically cyclically reduced} if all its cyclic permutations are graphically reduced. An element of $\Gamma \mathcal{G}$ that can be represented by a graphically cyclically reduced word is \emph{graphically cyclically reduced}. Every element is conjugate to a graphically cyclically reduced word. One can define the \emph{essential support} of an element as the support of a graphically cyclically reduced element that is conjugate to it. For more information on graphically reduced words, we refer to \cite{GreenGP} (see also \cite{HsuWise,VanKampenGP}).

\paragraph{Parabolic subgroups.} Let $\Gamma$ be a graph and $\mathcal{G}$ a collection of groups indexed by $\Gamma$. Given a subgraph $\Lambda \subset \Gamma$, we denote by $\langle \Lambda \rangle$ the subgroup of $\Gamma \mathcal{G}$ generated by the vertex-groups labelling the vertices of $\Lambda$. A subgroup of $\Gamma \mathcal{G}$ of the form $g \langle \Phi \rangle g^{-1}$ for some element $g \in \Gamma \mathcal{G}$ and subgraph $\Phi \subset \Gamma$ is a \emph{parabolic subgroup}. Let us record a few basic results about parabolic subgroups that will be useful later. 

\begin{lemma}\label{lem:WhenVirtuallyZ}
Let $\Gamma$ be a graph and $\mathcal{G}$ a collection of groups indexed by $\Gamma$. The graph product $\Gamma \mathcal{G}$ is virtually cyclic if and only if one of the following conditions hold:
\begin{itemize}
	\item $\Gamma$ is a complete graph all of whose vertices are labelled by finite groups;
	\item $\Gamma = \Xi \ast \{u\}$ where $\Xi$ is a complete graph all of whose vertices are labelled by finite groups and where $u$ is a vertex labelled by a virtually-$\mathbb{Z}$ group;
	\item $\Gamma = \Xi \ast \{u,v\}$ where $\Xi$ is a complete graph all of whose vertices are labelled by finite groups and where $u,v$ are two non-adjacent vertices both labelled by $\mathbb{Z}_2$. 
\end{itemize}
\end{lemma}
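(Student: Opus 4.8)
The plan is to prove both directions by analyzing when a graph product $\Gamma \mathcal{G}$ can be virtually cyclic, which is equivalent to being either finite or having two ends (i.e.\ virtually $\mathbb{Z}$). The key structural fact is that $\Gamma \mathcal{G}$ virtually cyclic forces strong constraints, because graph products contain many ``visible'' subgroups. First I would establish the easy direction: in each of the three listed cases, one checks directly that $\Gamma \mathcal{G}$ is virtually cyclic. In the first case, a complete graph with all finite vertex-groups gives a direct product of finite groups, hence finite. In the second case, $\Gamma = \Xi \ast \{u\}$ means $u$ is adjacent to every vertex of $\Xi$, so $\langle u \rangle$ is central relative to $\langle \Xi \rangle$ and the whole group is $\langle \Xi \rangle \times G_u$, a finite group times a virtually-$\mathbb{Z}$ group, which is virtually $\mathbb{Z}$. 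In the third case, $\langle u, v \rangle$ with $u,v$ non-adjacent and each of order two is the infinite dihedral group $D_\infty$, which is virtually $\mathbb{Z}$; since it is joined to the finite group $\langle \Xi \rangle$, the product $\langle \Xi \rangle \times D_\infty$ is again virtually $\mathbb{Z}$.

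\textbf{The forward direction via obstructions.} The harder direction is to show these are the \emph{only} possibilities. I would argue contrapositively: if $\Gamma \mathcal{G}$ falls into none of the three cases, I produce a subgroup that obstructs virtual cyclicity, namely a copy of $\mathbb{Z}^2$ or of $\mathbb{F}_2$ (the free group of rank two), since a virtually cyclic group contains neither. The core observation is that non-adjacent vertices $u, v$ generate $G_u \ast G_v$, and two disjoint edges, or a vertex-group of large order, create rank-two free or free abelian subgroups. Concretely, I would first reduce to the case where $\Gamma$ is connected: if $\Gamma$ is disconnected with at least two vertices, then either there are two non-adjacent infinite-order or large-order elements giving $\mathbb{F}_2$, or the disconnection is into exactly two vertices each labelled by $\mathbb{Z}_2$ (yielding $D_\infty$, i.e.\ case three with $\Xi$ empty). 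Assuming $\Gamma$ connected, I would examine whether $\Gamma$ is a complete graph: if not, there exist two non-adjacent vertices $u,v$, and since $\Gamma$ is connected there is a path linking them, which I can leverage to find either a $\mathbb{Z}^2$ (from a square, i.e.\ two disjoint edges forming a $4$-cycle) or an $\mathbb{F}_2$ (from a path of length two with appropriate non-commuting generators).

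\textbf{Case analysis on complete graphs.} If $\Gamma$ \emph{is} complete, then $\Gamma \mathcal{G}$ is the direct product of all its vertex-groups; this is virtually cyclic if and only if all factors are finite (case one) or all but one are finite and the remaining one is virtually $\mathbb{Z}$ (case two, with $u$ the distinguished vertex and $\Xi$ the complete graph on the rest). Here I use the fact that a direct product of two infinite groups contains $\mathbb{Z}^2$ and is never virtually cyclic, and that a virtually-$\mathbb{Z}$ group cannot have an infinite direct factor beyond a single virtually-$\mathbb{Z}$ piece. The remaining delicate point is handling the join structure $\ast$ appearing in cases two and three: I would observe that if $u$ (resp.\ $\{u,v\}$) is joined to a set $\Xi$ of vertices, then $\Xi$ must itself induce a complete graph of finite vertex-groups, else combining an infinite subgroup of $\langle \Xi \rangle$ with $G_u$ or with $D_\infty$ again produces $\mathbb{Z}^2$.

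\textbf{Main obstacle.} The step I expect to be most delicate is the bookkeeping that pins down exactly the join decomposition $\Gamma = \Xi \ast \{u\}$ or $\Gamma = \Xi \ast \{u,v\}$, rather than merely proving non-virtual-cyclicity fails. Specifically, after ruling out the obvious obstructions, I must show that the one ``infinite'' part of the graph is confined to either a single virtually-$\mathbb{Z}$ vertex or to exactly two non-adjacent order-two vertices, and that every other vertex is a finite group adjacent to \emph{everything}. The forced adjacency is what makes the join $\ast$ appear, and verifying that no vertex can fail to be adjacent to the infinite part without creating an $\mathbb{F}_2$ or $\mathbb{Z}^2$ requires a careful but routine check using graphically reduced words and the ping-pong lemma. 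I anticipate that this case analysis, while conceptually straightforward, is where all the genuine content lies.
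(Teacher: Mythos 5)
Your proposal is correct and follows essentially the same route as the paper: treat the complete case as a direct product of vertex-groups, and in the non-complete case use the fact that non-adjacent vertices $u,v$ generate the free product $\langle u\rangle \ast \langle v\rangle$ (virtually cyclic only when both factors are $\mathbb{Z}_2$), then force every remaining vertex to be adjacent to both $u$ and $v$ to obtain the join decomposition. The only cosmetic difference is that your detour through connectivity, squares, and paths is unnecessary -- the paper gets the forced adjacency directly by observing that a vertex $w$ non-adjacent to $u$ or $v$ makes $\langle u,v,w\rangle$ a free product that is not virtually cyclic.
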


\noindent
Recall that, given two graphs $\Phi$ and $\Psi$, their \emph{join} $\Phi \ast \Psi$ is the graph obtained from $\Phi \sqcup \Psi$ by connecting with an edge every vertex of $\Phi$ with every vertex of $\Psi$. 

\begin{proof}[Proof of Lemma~\ref{lem:WhenVirtuallyZ}.]
If $\Gamma$ is complete, then $\Gamma \mathcal{G}$ is the product of its vertex-groups. In this case, $\Gamma \mathcal{G}$ is virtually cyclic if and only if either all its vertex-groups are finite or one vertex-group is virtually-$\mathbb{Z}$ and all its other vertex-groups are finite. This corresponds to the first and second items of our lemma.

\medskip \noindent
Now, assume that $\Gamma$ is not complete. Fix two non-adjacent vertices $u,v \in \Gamma$. Since the subgroup $\langle u,v \rangle$ of $\Gamma \mathcal{G}$ decomposes as the free product $\langle u \rangle \ast \langle v \rangle$, the only possibility for $\Gamma \mathcal{G}$ to be virtually cyclic is that $\langle u \rangle$ and $\langle v \rangle$ are both cyclic of order two. Next, if $\Gamma$ contains a vertex $w$ that is not adjacent to both $u$ and $v$, then $\langle u,v,w \rangle$ decomposes as one of the following free products: $\langle u \rangle \ast \langle v \rangle \ast \langle w \rangle$, $(\langle u \rangle \times \langle w \rangle) \ast \langle v \rangle$, or $\langle u \rangle \ast (\langle v \rangle \times \langle w \rangle)$. In any case, $\Gamma \mathcal{G}$ cannot be virtually cyclic. Therefore, $\Gamma$ decomposes as a join $\Xi \ast \{ u,v\}$. Algebraically, this implies that $\Gamma \mathcal{G}$ decomposes as the product $\langle \Xi \rangle \times \langle u,v\rangle \simeq \langle \Xi \rangle \ast \mathbb{D}_\infty$. Then, $\Gamma \mathcal{G}$ is virtually cyclic if and only if $\langle \Xi \rangle$ is finite, which amounts to saying that $\Xi$ is a complete graph all of whose vertices are labelled by finite groups. This corresponds to the third item of our lemma. 
\end{proof}

\begin{lemma}\label{lem:WhenFiniteIndex}
Let $\Gamma$ be a graph and $\mathcal{G}$ a collection of groups indexed by $\Gamma$. For every subgraph $\Lambda \subset \Gamma$, $\langle \Lambda \rangle$ has finite index in $\Gamma \mathcal{G}$ if and only if $\Gamma= \Lambda \ast \Xi$ where $\Xi$ is a complete graph all of whose vertices are labelled by finite groups. 
\end{lemma}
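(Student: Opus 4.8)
The plan is to prove the two implications separately, starting with the easy one.

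\emph{The backward implication.} Suppose $\Gamma = \Lambda \ast \Xi$ with $\Xi$ a complete graph all of whose vertex-groups are finite. Since a join of graphs corresponds to a direct product of the associated graph products, $\Gamma\mathcal{G}$ splits as $\langle\Lambda\rangle \times \langle\Xi\rangle$. As $\Xi$ is complete, $\langle\Xi\rangle$ is the direct product of its finitely many finite vertex-groups, hence a finite group. Therefore $\langle\Lambda\rangle$ has index $|\langle\Xi\rangle| < \infty$ in $\Gamma\mathcal{G}$. This direction requires no real work.

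\emph{The forward implication.} Assume $\langle\Lambda\rangle$ has finite index and let $\Xi$ be the subgraph of $\Gamma$ induced by the vertices outside $\Lambda$. I would first record the elementary fact that a surjective homomorphism cannot increase the index of a subgroup: if $\rho\colon\Gamma\mathcal{G}\to Q$ is onto and $H\leq\Gamma\mathcal{G}$, then the assignment $gH\mapsto\rho(g)\rho(H)$ defines a surjection $\Gamma\mathcal{G}/H\twoheadrightarrow Q/\rho(H)$, so that $[Q:\rho(H)]\leq[\Gamma\mathcal{G}:H]$. The main step is then to show that $\Gamma = \Lambda \ast \Xi$, i.e.\ that every vertex of $\Xi$ is adjacent to every vertex of $\Lambda$. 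Suppose not, and pick non-adjacent $v\in\Xi$ and $w\in\Lambda$. Using the canonical retraction $\rho$ of $\Gamma\mathcal{G}$ onto the parabolic subgroup $\langle\{v,w\}\rangle$ (killing every vertex-group outside $\{v,w\}$), one has $\langle\{v,w\}\rangle = G_v \ast G_w$ because $v,w$ are non-adjacent, and $\rho(\langle\Lambda\rangle)=G_w$ since $w$ is the only vertex of $\Lambda$ surviving $\rho$. As $G_v$ is non-trivial by our standing convention, $G_w$ has infinite index in the free product $G_v\ast G_w$; the index inequality above would then force $\langle\Lambda\rangle$ to have infinite index in $\Gamma\mathcal{G}$, a contradiction. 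Hence $\Gamma=\Lambda\ast\Xi$.

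Finally, knowing $\Gamma=\Lambda\ast\Xi$, the splitting $\Gamma\mathcal{G}=\langle\Lambda\rangle\times\langle\Xi\rangle$ forces $\langle\Xi\rangle$ to be finite, of order $[\Gamma\mathcal{G}:\langle\Lambda\rangle]$. It then remains to observe that a graph product is finite precisely when its defining graph is complete and all its vertex-groups are finite: two non-adjacent vertices would span a free product of non-trivial groups, which is infinite, whereas a complete graph yields the direct product of the vertex-groups. Thus $\Xi$ is complete with finite vertex-groups, which completes the proof. The only genuinely delicate point is this join step, and there the crux is simply choosing the right retraction, namely the one onto the two-vertex parabolic $\langle\{v,w\}\rangle$, so that $\langle\Lambda\rangle$ visibly projects onto an infinite-index subgroup of a free product.
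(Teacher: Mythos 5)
Your proof is correct, and its overall skeleton matches the paper's: reduce to showing that $\Gamma$ must split as the join $\Lambda\ast\Xi$, and then read off finiteness of $\langle\Xi\rangle$ from the direct-product decomposition. The one place where you diverge is the crux, namely ruling out a pair of non-adjacent vertices $v\in\Gamma\setminus\Lambda$, $w\in\Lambda$. The paper works inside $\Gamma\mathcal{G}$ with normal forms: taking non-trivial $a\in\langle v\rangle$ and $b\in\langle w\rangle$, it observes that no power $(ba)^\ell$ with $\ell\geq 1$ can lie in $\langle\Lambda\rangle$ because such a word is graphically reduced with $v$ in its support, so the cosets $(ba)^\ell\langle\Lambda\rangle$ are pairwise distinct. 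You instead push the problem out of $\Gamma\mathcal{G}$ via the canonical retraction onto the parabolic $\langle\{v,w\}\rangle=G_v\ast G_w$, note that $\langle\Lambda\rangle$ maps onto the factor $G_w$, and invoke the (correct and cleanly stated) monotonicity of index under surjections together with the fact that a free factor has infinite index in a free product of non-trivial groups. Both arguments ultimately rest on the same elementary phenomenon in $G_v\ast G_w$; yours has the small advantage of only needing the existence of the retraction rather than the full normal-form theorem for graph products, while the paper's is more self-contained given that it has already set up graphically reduced words. Your closing observation that a graph product is finite precisely when its graph is complete with finite vertex-groups is exactly the paper's final step as well.
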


\begin{proof}
First, assume that there exist two non-adjacent vertices $u \in \Gamma \backslash \Lambda$ and $v \in \Lambda$. Fix two non trivial elements $a \in \langle u \rangle$ and $b \in \langle v \rangle$. If there exist two distinct integers $p,q \geq 1$ such that $(ba)^p$ and $(ba)^q$ belong to the same $\langle \Lambda \rangle$-coset, then we find an integer $\ell \geq 1$ such that $(ba)^\ell \in \langle \Lambda \rangle$. But this is impossible since, as a graphically reduced word, $(ba)^\ell$ cannot represent an element of $\langle \Lambda \rangle$. Consequently, $\langle \Lambda \rangle$ must have infinite index in $\Gamma \mathcal{G}$.

\medskip \noindent
From now on, assume that every vertex in $\Gamma \backslash \Lambda$ is adjacent to every vertex in $\Lambda$, i.e.\ $\Gamma$ decomposes as a join $\Lambda \ast \Xi$. Algebraically, $\Gamma \mathcal{G}$ decomposes as the product $\langle \Lambda \rangle \ast \langle \Xi \rangle$. Then, $\langle \Xi \rangle$ has finite index in $\Gamma \mathcal{G}$ if and only if $\langle \Xi \rangle$ is finite, which amounts to saying that $\Xi$ is a complete graph all of whose vertices are labelled by finite groups.
\end{proof}

\begin{lemma}\label{lem:ParabolicInclusion}
Let $\Gamma$ be a graph and $\mathcal{G}$ a collection of groups indexed by $\Gamma$. For all subgraphs $\Phi,\Psi \subset \Gamma$ and element $a \in \Gamma \mathcal{G}$, the inclusion $a \langle \Phi \rangle a^{-1} \leq \langle \Psi \rangle$ holds if and only if $\Phi \subset \Psi$ and $a \in \langle \Psi \rangle \cdot \langle \mathrm{star}(\Phi) \rangle$.
\end{lemma}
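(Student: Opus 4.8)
The plan is to prove the two implications quite asymmetrically, relying throughout on the combinatorics of graphically reduced words and, in particular, on the facts that an element lies in $\langle\Psi\rangle$ if and only if its support is contained in $\Psi$, and that parabolic subgroups admit length-additive coset decompositions. For the \emph{if} direction the key observation is that $\langle\mathrm{star}(\Phi)\rangle$ normalizes $\langle\Phi\rangle$: writing $\mathrm{lk}(\Phi)$ for the set of vertices adjacent to every vertex of $\Phi$, we have $\mathrm{star}(\Phi)=\Phi\cup\mathrm{lk}(\Phi)$ and $\langle\mathrm{star}(\Phi)\rangle=\langle\Phi\rangle\times\langle\mathrm{lk}(\Phi)\rangle$, so every $s\in\langle\mathrm{star}(\Phi)\rangle$ satisfies $s\langle\Phi\rangle s^{-1}=\langle\Phi\rangle$. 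Hence, if $a=ps$ with $p\in\langle\Psi\rangle$ and $s\in\langle\mathrm{star}(\Phi)\rangle$, then $a\langle\Phi\rangle a^{-1}=p\langle\Phi\rangle p^{-1}$; since $\Phi\subset\Psi$ gives $\langle\Phi\rangle\le\langle\Psi\rangle$ and $p\in\langle\Psi\rangle$, this lands in $\langle\Psi\rangle$. This settles one direction in a few lines.

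For the \emph{only if} direction, suppose $a\langle\Phi\rangle a^{-1}\le\langle\Psi\rangle$; I may assume $\Phi\neq\emptyset$, the empty case being trivial as $\mathrm{star}(\emptyset)=\Gamma$. First I would replace $a$ by an element $b$ of minimal graphically reduced length in the double coset $\langle\Psi\rangle\,a\,\langle\mathrm{star}(\Phi)\rangle$. Using again that $\langle\mathrm{star}(\Phi)\rangle$ normalizes $\langle\Phi\rangle$, one checks that $b\langle\Phi\rangle b^{-1}\le\langle\Psi\rangle$ still holds, and that $a\in\langle\Psi\rangle\langle\mathrm{star}(\Phi)\rangle$ if and only if $b=1$. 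Minimality of $b$ translates into two divisibility statements: $b$ has no nontrivial left divisor in $\langle\Psi\rangle$ and no nontrivial right divisor in $\langle\mathrm{star}(\Phi)\rangle$, in the sense of the length-additive coset decompositions. In particular, no reduced expression of $b$ ends with a syllable on a vertex of $\mathrm{star}(\Phi)$. The whole problem is thus reduced to proving $b=1$.

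The heart of the argument, and the step I expect to be the main obstacle, is establishing $b=1$. Assuming $b\neq1$, I would fix a reduced expression $b=g_1\cdots g_n$ and let $u$ be the vertex carrying $g_n$. Since $u\notin\mathrm{star}(\Phi)$, there is $v\in\Phi$ with $v\neq u$ and $v\not\sim u$. Choosing a nontrivial $x\in G_v$, the hypothesis gives $bxb^{-1}\in\langle\Psi\rangle$, so $\mathrm{supp}(bxb^{-1})\subset\Psi$, and the crux is to compute this support. Writing $b=b'\,t$ with $t\in\langle\mathrm{star}(v)\rangle$ and $b'$ the shortest element of $b\langle\mathrm{star}(v)\rangle$ (a length-additive decomposition), the factor $t\in G_v\times\langle\mathrm{lk}(v)\rangle$ commutes with $x$ up to absorbing a $G_v$-syllable, so that $bxb^{-1}=b'\,\bar x\,b'^{-1}$ for some nontrivial $\bar x\in G_v$. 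Because the last syllable of $b'$ lies outside $\mathrm{star}(v)$, this word is graphically reduced, whence $\mathrm{supp}(bxb^{-1})=\mathrm{supp}(b')\cup\{v\}$. This is exactly the point demanding care about the shuffling move, since the notion of first or last syllable is not shuffle-invariant; I would therefore phrase everything through supports and the coset decompositions, which are invariant, rather than through chosen syllables.

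From $\mathrm{supp}(b')\cup\{v\}\subset\Psi$ I obtain $b'\in\langle\Psi\rangle$ and $v\in\Psi$. Then $t=b'^{-1}b\in\langle\Psi\rangle\,b$, and if $b'\neq1$ this element is strictly shorter than $b$, contradicting minimality; hence $b'=1$ and $b=t\in\langle\mathrm{star}(v)\rangle$. But then $u\in\mathrm{supp}(b)\subset\mathrm{star}(v)$, contradicting $v\neq u$ and $v\not\sim u$. Therefore $b=1$, which yields $a\in\langle\Psi\rangle\langle\mathrm{star}(\Phi)\rangle$, while $\langle\Phi\rangle=b\langle\Phi\rangle b^{-1}\le\langle\Psi\rangle$ forces $\Phi\subset\Psi$ by comparing supports. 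The only inputs beyond the preliminaries are the support characterization of membership in a parabolic subgroup and the existence of the length-additive decomposition $g=g'\,t$ relative to a parabolic together with the reducedness of $g'\bar x g'^{-1}$, both standard consequences of the normal-form theory recalled above.
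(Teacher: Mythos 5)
The paper does not prove this lemma; it only cites \cite[Lemma~3.17]{MR4295519}, so there is no in-paper argument to compare against. Your proof is correct and is essentially the standard one for this statement: reduce to a minimal-length representative $b$ of the double coset $\langle\Psi\rangle a \langle\mathrm{star}(\Phi)\rangle$, use that minimality forbids any reduced expression of $b$ from ending in a syllable over $\mathrm{star}(\Phi)$, and derive a contradiction from the support computation $\mathrm{supp}(b x b^{-1}) = \mathrm{supp}(b') \cup \{v\}$ via the length-additive decomposition $b = b't$ relative to $\langle\mathrm{star}(v)\rangle$. All the normal-form facts you invoke (support characterisation of membership in a parabolic, existence of length-additive coset decompositions, reducedness of $b'\bar{x}b'^{-1}$ when $b'$ is a minimal coset representative) are standard, and you correctly flag the shuffle-invariance issue and handle it by quantifying over all reduced expressions.
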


\noindent
A proof of this lemma can be found in \cite[Lemma~3.17]{MR4295519}. As an immediate consequence, it follows that:

\begin{cor}\label{cor:InclusionParabolic}
Let $\Gamma$ be a graph and $\mathcal{G}$ a collection of groups indexed by $\Gamma$. For all subgraph $\Phi \subset \Gamma$ and element $a \in \Gamma \mathcal{G}$, if $a \langle \Phi \rangle a^{-1} \leq \langle \Phi \rangle$ then $a \langle \Phi \rangle a^{-1} = \langle \Phi \rangle$. 
\end{cor}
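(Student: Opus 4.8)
The plan is to deduce Corollary~\ref{cor:InclusionParabolic} directly from Lemma~\ref{lem:ParabolicInclusion} by specializing to the case $\Psi = \Phi$, and then by exploiting the self-normalizing behaviour of the conjugating element that the lemma produces. Concretely, suppose that $a \langle \Phi \rangle a^{-1} \leq \langle \Phi \rangle$. Applying Lemma~\ref{lem:ParabolicInclusion} with $\Psi = \Phi$, I get that $\Phi \subset \Phi$ (automatic) and, crucially, that $a \in \langle \Phi \rangle \cdot \langle \mathrm{star}(\Phi) \rangle$. The goal is then to upgrade the inclusion $a \langle \Phi \rangle a^{-1} \leq \langle \Phi \rangle$ to an equality, i.e.\ to show that $\langle \Phi \rangle \leq a \langle \Phi \rangle a^{-1}$ as well.

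\medskip \noindent
The key observation is that if $a \in \langle \Phi \rangle \cdot \langle \mathrm{star}(\Phi) \rangle$, then $a$ normalizes $\langle \Phi \rangle$. Indeed, recall that $\mathrm{star}(\Phi)$ consists of $\Phi$ together with all vertices adjacent to every vertex of $\Phi$; thus every element of $\langle \mathrm{star}(\Phi) \rangle$ can be written as a product of an element of $\langle \Phi \rangle$ and an element of $\langle \mathrm{link}(\Phi) \rangle$, where the latter commutes with all of $\langle \Phi \rangle$. Therefore both $\langle \Phi \rangle$ and $\langle \mathrm{star}(\Phi) \rangle$ normalize $\langle \Phi \rangle$: the former because a group normalizes itself, and the latter because conjugation by a $\langle \mathrm{star}(\Phi)\rangle$-element acts on $\langle \Phi \rangle$ only through its $\langle \Phi \rangle$-component, the $\langle \mathrm{link}(\Phi)\rangle$-component being central relative to $\langle \Phi \rangle$. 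Since $a$ lies in the product of two subgroups each normalizing $\langle \Phi \rangle$, the element $a$ itself normalizes $\langle \Phi \rangle$, giving $a \langle \Phi \rangle a^{-1} = \langle \Phi \rangle$.

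\medskip \noindent
Alternatively, and perhaps more cleanly, I could argue without unpacking the structure of $\mathrm{star}(\Phi)$ at all: since $a \in \langle \Phi \rangle \cdot \langle \mathrm{star}(\Phi) \rangle = \langle \mathrm{star}(\Phi) \rangle$ (as $\Phi \subset \mathrm{star}(\Phi)$, so the product collapses), it suffices to recall that $\langle \mathrm{star}(\Phi) \rangle$ is precisely the normalizer of $\langle \Phi \rangle$ in $\Gamma \mathcal{G}$, a standard fact about parabolic subgroups of graph products that itself follows from Lemma~\ref{lem:ParabolicInclusion}. Under this route the conclusion is immediate.

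\medskip \noindent
I do not anticipate a genuine obstacle here; the statement is a short formal consequence of the preceding lemma. The only point requiring minor care is the bookkeeping in identifying $\langle \Phi \rangle \cdot \langle \mathrm{star}(\Phi) \rangle$ with $\langle \mathrm{star}(\Phi) \rangle$ and in verifying that this product of subgroups normalizes $\langle \Phi \rangle$ — essentially the claim that $\mathrm{star}(\Phi)$ encodes exactly the commuting data needed for normalization. Everything else is a direct substitution into Lemma~\ref{lem:ParabolicInclusion}, which is exactly why the author states this as an immediate corollary.
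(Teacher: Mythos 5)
Your proposal is correct and matches the paper's intent: the paper leaves the corollary as an ``immediate consequence'' of Lemma~\ref{lem:ParabolicInclusion}, and the intended argument is exactly your specialization $\Psi=\Phi$, yielding $a\in\langle\Phi\rangle\cdot\langle\mathrm{star}(\Phi)\rangle=\langle\mathrm{star}(\Phi)\rangle$, which normalizes $\langle\Phi\rangle$ since $\mathrm{star}(\Phi)$ is the join of $\Phi$ with $\mathrm{link}(\Phi)$ and the latter centralizes $\langle\Phi\rangle$. Both of your routes are valid and essentially identical to what the author had in mind.
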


\paragraph{Join-subgroups.} Parabolic subgroups given by join subgraphs play a central role in the study of graph products of groups. In particular, as explained by our next lemma, they can be used in order to characterise maximal product subgroups in graph products. Here, we refer to a \emph{maximal product subgroup} of a given group as a maximal member of the collection of the subgroups that decompose as products of two non-trivial groups. 

\begin{lemma}\label{lem:MaxProdInGP}
Let $\Gamma$ be a finite graph and $\mathcal{G}$ a collection of groups indexed by $\Gamma$. A subgroup of $\Gamma \mathcal{G}$ is a maximal product subgroup if and only if either it is conjugate to $\langle \Lambda \rangle$ for some maximal join $\Lambda \subset \Gamma$ or it is a maximal product subgroup in a conjugate of a vertex-group given by an isolated vertex of $\Gamma$. 
\end{lemma}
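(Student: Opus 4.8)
The plan is to reduce the entire equivalence to a single structural input, call it $(\star)$: \emph{every subgroup $H\leq \Gamma\mathcal{G}$ that splits as a non-trivial direct product is contained either in a conjugate of a join-subgroup $\langle \Lambda\rangle$, with $\Lambda\subset\Gamma$ a genuine join, or in a conjugate of a vertex-group $G_u$.} Granting $(\star)$, both implications follow from elementary bookkeeping with the parabolic-inclusion lemmas, so I would isolate $(\star)$ first and treat it as the heart of the argument.

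For the ``if'' direction, suppose first that $\Lambda=\Phi\ast\Psi$ is a maximal join, so that $\langle\Lambda\rangle=\langle\Phi\rangle\times\langle\Psi\rangle$ is a product subgroup (both factors non-trivial by our convention on $\mathcal{G}$). If $K\supseteq\langle\Lambda\rangle$ is a product subgroup, then $(\star)$ places $K$ inside some conjugate $g\langle\Lambda'\rangle g^{-1}$ with $\Lambda'$ a join; the vertex-group alternative is excluded since $g^{-1}\langle\Lambda\rangle g\leq\langle\{u\}\rangle$ would force $\Lambda\subseteq\{u\}$ through Lemma~\ref{lem:ParabolicInclusion}, impossible as $|\Lambda|\geq 2$. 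From $g^{-1}\langle\Lambda\rangle g\leq\langle\Lambda'\rangle$, Lemma~\ref{lem:ParabolicInclusion} gives $\Lambda\subseteq\Lambda'$, whence $\Lambda=\Lambda'$ by maximality of $\Lambda$ among joins; then $g^{-1}\langle\Lambda\rangle g\leq\langle\Lambda\rangle$ and Corollary~\ref{cor:InclusionParabolic} force $K=\langle\Lambda\rangle$, proving maximality. The isolated-vertex case is handled more directly: if $u$ is isolated then $G_u$ is a free factor, $\Gamma\mathcal{G}=G_u\ast\langle\Gamma\setminus\{u\}\rangle$, so by the Kurosh subgroup theorem any product subgroup is conjugate into one of the two factors (it cannot be free, since a non-trivial direct product is never free: two commuting non-trivial elements of a free group lie in a common cyclic subgroup, violating the internal directness $A\cap B=1$). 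The triviality of $A\cap gBg^{-1}$ in a free product then shows that any product subgroup containing a maximal product subgroup $P\leq G_u$ must itself lie in $G_u$, hence equal $P$.

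For the ``only if'' direction, let $H$ be a maximal product subgroup and apply $(\star)$. If $H\leq g\langle\Lambda\rangle g^{-1}$ with $\Lambda$ a join, enlarge $\Lambda$ to a maximal join $\Lambda_{\max}$ (possible as $\Gamma$ is finite); then $g\langle\Lambda_{\max}\rangle g^{-1}$ is a product subgroup containing $H$, so maximality yields $H=g\langle\Lambda_{\max}\rangle g^{-1}$, a conjugate of a maximal join-subgroup. Otherwise $H$ lies in no conjugate join-subgroup but in some $gG_ug^{-1}$; here $u$ must be isolated, for if $u$ were adjacent to a vertex $v$ then $g\langle\{u,v\}\rangle g^{-1}$ would be a join-subgroup conjugate containing $H$. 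Since product subgroups of $gG_ug^{-1}$ are product subgroups of $\Gamma\mathcal{G}$, maximality of $H$ means precisely that $g^{-1}Hg$ is a maximal product subgroup of $G_u$, as claimed.

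The main obstacle is establishing $(\star)$. I would argue via the parabolic closure $\mathrm{Pc}(H)$, the smallest parabolic subgroup containing $H$, which exists for graph products. Conjugating, write $\mathrm{Pc}(H)=\langle\Delta\rangle$. If $\Delta$ is a single vertex we are in the vertex-group case; if $\Delta$ is disconnected then $\langle\Delta\rangle$ is a non-trivial free product and the Kurosh argument above places $H$ in a proper parabolic, contradicting minimality, so $\Delta$ is connected. The crux is excluding the case where $\Delta$ is connected, has at least two vertices, and is not a join (equivalently, its complement is connected). In that situation $\langle\Delta\rangle$ carries a contracting, ``rank-one'' element $t$ of full essential support $\Delta$, whose centraliser in $\langle\Delta\rangle$ is virtually cyclic (its link $\mathrm{link}(\Delta)$ being empty). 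Writing $H=A\times B$ and invoking the classification of subgroups of acylindrically hyperbolic groups, one factor, say $A$, must contain such a full-support contracting element; then $B$ lies in its virtually cyclic centraliser, forcing $H$ itself to be virtually cyclic, contradicting the fact that $\langle\Delta\rangle=\mathrm{Pc}(H)$ is not virtually cyclic for such a $\Delta$ by Lemma~\ref{lem:WhenVirtuallyZ}. Hence $\Delta$ must be a join, which proves $(\star)$. The delicate point, where I expect the real work to lie, is producing the full-support contracting element inside a single factor and ruling out the possibility that $A$ and $B$ are both ``elliptic'' with supports filling $\Delta$ only jointly; this is exactly the behaviour that the quasi-median/cubical geometry of graph products over non-join graphs (or a suitable centraliser theorem) is designed to control, and it is where I would either invoke an existing structural result or carry out the van Kampen-diagram analysis directly.
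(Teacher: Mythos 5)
First, a point of comparison: the paper does not actually prove Lemma~\ref{lem:MaxProdInGP}; it cites \cite[Proposition~2.8]{MR4808711}. The closest internal analogue is the proof of Lemma~\ref{lem:GPcontainedInProduct}, which follows exactly the strategy of your key claim $(\star)$: pass to the parabolic closure, use acylindrical hyperbolicity of graph products over non-join graphs to produce a WPD element inside the subgroup, and play this off against the centraliser structure of a direct product. Your reduction of the lemma to $(\star)$, together with the bookkeeping via Lemma~\ref{lem:ParabolicInclusion}, Corollary~\ref{cor:InclusionParabolic}, and the Kurosh theorem in the isolated-vertex case, is correct and matches that architecture.

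The genuine gap is in your proof of $(\star)$, and it is larger than the ``delicate point'' you flag. (i) The statement you need is not that one factor of $H=A\times B$ contains a full-support contracting element, but that $H$ itself contains a generalised loxodromic (WPD) element; this is \cite[Corollary~6.20]{MR3368093}, as invoked in the proof of Lemma~\ref{lem:GPcontainedInProduct}. Given such an $h=(a,b)\in H$, every non-trivial power of $h$ has virtually cyclic centraliser, while a suitable power of $(a,b)$ has centraliser containing $\langle a^k\rangle\times\langle b^k\rangle$, $A\times\langle b^k\rangle$, or $\langle a^k\rangle\times B$ --- and this yields a contradiction only when \emph{both} factors are infinite. (ii) The paper's notion of product subgroup allows finite factors (e.g.\ $\mathbb{Z}_2\times\mathbb{Z}_2$ or $\mathbb{Z}_2\times\mathbb{F}_2$ inside a right-angled Coxeter group), so the case where, say, $A$ is finite needs a separate argument, which your proposal omits: a finite subgroup lies in a clique-parabolic $g\langle K\rangle g^{-1}$, the factor $B$ normalises the parabolic closure of $A$, and the normaliser of $g\langle K\rangle g^{-1}$ is $g\langle\mathrm{star}(K)\rangle g^{-1}$, which is a join-parabolic or an isolated-vertex-parabolic. (iii) Your closing contradiction --- ``$H$ is virtually cyclic, contradicting that $\mathrm{Pc}(H)=\langle\Delta\rangle$ is not virtually cyclic'' --- is not a contradiction: a virtually cyclic subgroup can have full parabolic closure (e.g.\ $\langle ab\rangle$ in $\langle a\rangle\ast\langle b\rangle$), and a virtually cyclic group such as $\mathbb{Z}_2\times\mathbb{Z}$ is a perfectly good product of two non-trivial groups, so arriving at ``$H$ virtually cyclic'' does not by itself finish the proof of $(\star)$.
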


\noindent
A proof of this lemma can be found in \cite[Proposition~2.8]{MR4808711}. It is worth noticing that the maximality given by the previous lemma behaves nicely with respect to commensurability:

\begin{lemma}\label{lem:GPcontainedInProduct}
Let $\Gamma$ be a finite graph and $\mathcal{G}$ a collection of groups indexed by $\Gamma$. A subgroup of $\Gamma \mathcal{G}$ commensurable to a product of two infinite groups is contained in a conjugate of a vertex-group or in a maximal product subgroup.
\end{lemma}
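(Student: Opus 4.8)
The plan is to first replace $H$ by a genuine internal product subgroup of $\Gamma\mathcal{G}$ and then transfer the conclusion back to $H$ by a canonicity argument. By definition $H$ contains a finite-index subgroup isomorphic to a finite-index subgroup $K$ of a direct product $C \times D$ of two infinite groups. A Goursat-type argument shows that $K$ contains $(C\cap K)\times(D\cap K)$ as a finite-index subgroup with both factors infinite: indeed, the composite $C \hookrightarrow C\times D \twoheadrightarrow (C\times D)/K$ has finite image and kernel $C\cap K$, so $C\cap K$ has finite index in $C$, and symmetrically for $D$. Pulling this back through the isomorphism, $H$ contains a finite-index subgroup $P = P_1\times P_2$ where $P_1,P_2$ are infinite subgroups of $\Gamma\mathcal{G}$. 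Thus it suffices to understand how the honest product subgroup $P$ sits inside $\Gamma\mathcal{G}$, bearing in mind that $P$ is normal up to commensuration: for every $h\in H$ the conjugate $hPh^{-1}$ lies in $H$ with finite index, hence is commensurable to $P$.

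Next I would locate a maximal product subgroup $M$ containing $P$. Since $\Gamma$ is finite, there are only finitely many joins in $\Gamma$ and finitely many vertices, so once one knows that $P$ sits inside \emph{some} product subgroup of the two types appearing in Lemma~\ref{lem:MaxProdInGP}, a maximal one exists above it; the existence of such a containing subgroup is exactly the structural input coming from the analysis of products in graph products in \cite{MR4808711}. By Lemma~\ref{lem:MaxProdInGP}, $M$ is then either a conjugate $g\langle\Lambda\rangle g^{-1}$ of the parabolic attached to a maximal join $\Lambda\subset\Gamma$, or a maximal product subgroup inside a conjugate $g\langle u\rangle g^{-1}$ of the vertex-group of an isolated vertex $u$. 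The key additional point I would establish is that $M$ is \emph{canonically} attached to the commensurability class of $P$: two commensurable product subgroups with infinite factors lie in the same maximal product subgroup. This canonicity rests on the uniqueness of the product decomposition of maximal product subgroups \cite{MR4808711}, together with the fact that an infinite product subgroup cannot be spread across two distinct maximal joins.

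Finally I would promote the conclusion from $P$ to $H$. Since each conjugate $hPh^{-1}$ $(h\in H)$ is commensurable to $P$, the canonicity forces $hMh^{-1}=M$ for all $h\in H$, so $H$ normalises $M$. If $M$ sits inside a conjugate of the vertex-group of an isolated vertex $u$, then $\Gamma\mathcal{G}$ splits as the free product $\langle u\rangle \ast \langle \Gamma\setminus\{u\}\rangle$; the infinite subgroup $P$ is elliptic and fixes a unique vertex of the associated Bass--Serre tree, namely the one stabilised by $g\langle u\rangle g^{-1}$, and every commensurable conjugate $hPh^{-1}$ fixes the same vertex, whence $H\le g\langle u\rangle g^{-1}$ is contained in a conjugate of a vertex-group, the first alternative. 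If instead $M=g\langle\Lambda\rangle g^{-1}$ with $\Lambda$ a maximal join, then $H\le N_{\Gamma\mathcal{G}}(M)$, and I would compute this normaliser using Lemma~\ref{lem:ParabolicInclusion} with $\Phi=\Psi=\Lambda$ (together with Corollary~\ref{cor:InclusionParabolic}): conjugating $g$ away, the normaliser equals $\langle\Lambda\rangle\cdot\langle\mathrm{star}(\Lambda)\rangle=\langle\Lambda\ast\mathrm{link}(\Lambda)\rangle$, and maximality of the join $\Lambda$ forces $\mathrm{link}(\Lambda)=\varnothing$, so that $N_{\Gamma\mathcal{G}}(M)=M$ and $H\le M$ is contained in a maximal product subgroup, the second alternative.

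The hard part will be the canonicity asserted in the second paragraph, and more generally the passage from $P$ to $H$. The delicate point is that the finite-index product subgroup $P$ need \emph{not} have finite index in $M$, so one cannot simply invoke that finite-index subgroups determine a common parabolic closure; the argument must instead genuinely use the uniqueness of product splittings from \cite{MR4808711} to pin down $M$ from the commensurability class of $P$, and then rely on the clean normaliser computation via Lemma~\ref{lem:ParabolicInclusion} to close the argument.
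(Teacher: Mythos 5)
Your reduction to an honest internal product $P=P_1\times P_2$ of finite index in $H$ (via the Goursat-type argument) is correct, but the core of your argument rests on a claim that is false. You assert as the ``key additional point'' that the maximal product subgroup $M$ containing $P$ is canonically determined by the commensurability class of $P$, equivalently that ``an infinite product subgroup cannot be spread across two distinct maximal joins.'' This fails already in $A(P_3)$: writing $a,b,c,d$ for the consecutive vertices of the path, the subgroup $\langle b,c\rangle\simeq\mathbb{Z}^2$ is a product of two infinite groups contained in the two \emph{distinct} maximal product subgroups $\langle a,b,c\rangle=\langle b\rangle\times\langle a,c\rangle$ and $\langle b,c,d\rangle=\langle c\rangle\times\langle b,d\rangle$, both of which arise from maximal joins. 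Since $M$ is not unique, the deduction ``canonicity forces $hMh^{-1}=M$ for all $h\in H$'' collapses, and with it the entire third paragraph (the normaliser computation $N(M)=M$ is fine in itself, but you never legitimately get $H\leq N(M)$). A secondary issue is that the existence of \emph{some} maximal product subgroup containing the honest product $P$ is essentially the statement being proved (specialised to genuine products); Lemma~\ref{lem:MaxProdInGP} only characterises which subgroups are maximal product subgroups and does not by itself tell you where an arbitrary product of two infinite subgroups lives, so invoking \cite{MR4808711} here is close to assuming the conclusion.

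For comparison, the paper's proof avoids locating any canonical $M$ at all. It establishes a centraliser dichotomy: on one hand, every element of a group commensurable to a product of two infinite groups has a non-trivial power whose centraliser is not virtually cyclic; on the other hand, if $H$ is contained neither in a conjugate of a vertex-group nor in a maximal product subgroup, then the smallest parabolic $g\langle\Lambda\rangle g^{-1}$ containing $H$ has $\mathrm{link}(\Lambda)=\varnothing$, and $H$ contains a WPD element (for a suitable tree action, via \cite[Corollary~6.20]{MR3368093}) all of whose non-trivial powers have virtually cyclic centralisers by Proposition~\ref{prop:CentralisersGP}. These two facts are directly contradictory, which proves the lemma without ever needing uniqueness of the ambient product subgroup. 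If you want to salvage a structural argument along your lines, you would need to work with the (possibly non-singleton) family of maximal product subgroups containing $P$ and argue much more carefully about how $H$ permutes parabolic closures, rather than appealing to a canonicity that does not hold.
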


\begin{proof}
Let $H \leq \Gamma \mathcal{G}$ be a subgroup that contains a finite-index subgroup $\dot{H}$ in common with a product $A \times B$ of two infinite groups. Given an $h \in H$, there exists some $p \geq 1$ such that $h^p \in \dot{H}$. Thinking of $h^p$ as an element of $A \times B$, it can be written as $(a,b)$ for some $a \in A$ and $b \in B$. Fix a $q \geq 1$ such that $a^q$ and $b^q$ either are trivial or have infinite order. Then the centraliser of $h^{pq}= (a^q,b^q)$ in $A \times B$ is $A \times B$ if $a^q=b^q=1$; contains $A \times \langle b \rangle \simeq A \times \mathbb{Z}$ if $a^q=1$ and if $b$ has infinite order; contains $\langle a \rangle \times B \simeq \mathbb{Z} \times B$ if $a$ has infinite order and if $b^q=1$; contains $\langle a \rangle \times \langle b \rangle \simeq \mathbb{Z}^2$ if $a$ and $b$ both have infinite order. In any case, the centraliser of $h^{pq}$ in $A \times B$, and a fortiori in $H$ is not virtually cyclic. Thus, we have proved that every element of $H$ has a non-trivial power whose centraliser is not virtually cyclic.

\medskip \noindent
Now, assume that $H$ is not contained in a conjugate of a vertex-group nor in a maximal product subgroup. We claim that $H$ must contain an element all of whose non-trivial powers have a virtually cyclic centraliser. As a consequence of the previous observation, this will conclude the proof of our lemma. Let $g \langle \Lambda \rangle g^{-1}$ denote the smallest parabolic subgroup containing $H$. (Such a subgroup exists according to \cite{MR3365774}.) According to \cite[Corollary~6.20]{MR3368093}, there exists a tree on which $g \langle \Lambda \rangle g^{-1}$ acts such that $H$ contains a WPD element $h \in H$. Every non-trivial power of such an element must have a virtually cyclic centralisers in $g \langle \Lambda \rangle g^{-1}$. But, as a consequence of Proposition~\ref{prop:CentralisersGP} below, the centraliser of an element of $H$ in $\Gamma \mathcal{G}$ decomposes as the product of the centraliser of the element in $g \langle \Lambda \rangle g^{-1}$ with $g \langle \mathrm{link}(\Lambda) \rangle g^{-1}$. Since $H$ cannot be contained in a join-subgroup, $\mathrm{link}(\Lambda) \rangle$ must be empty. Consequently, the centraliser of a non-trivial power of $h$ in $\Gamma \mathcal{G}$, and a fortiori in $H$, is virtually cyclic. 
\end{proof}

\noindent
Finally, let us observe no two maximal product subgroups in graph products can be commensurable:

\begin{lemma}\label{lem:BigInterProd}
Let $\Gamma$ be a graph and $\mathcal{G}$ a collection of groups indexed by $\Gamma$. Let $\Phi,\Psi \subset \Gamma$ be two maximal joins and let $g,h \in \Gamma \mathcal{G}$. If $g \langle \Phi \rangle g^{-1}$ has a finite-index subgroup contained in $h \langle \Psi \rangle h^{-1}$, then $g \langle \Phi \rangle g^{-1}= h \langle \Psi \rangle h^{-1}$. 
\end{lemma}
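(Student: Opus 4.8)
The plan is to reduce the statement to a purely combinatorial assertion about the supports, where the maximality of both $\Phi$ and $\Psi$ can be brought to bear. First I would set $P := g \langle \Phi \rangle g^{-1} \cap h \langle \Psi \rangle h^{-1}$. Since intersections of parabolic subgroups in a graph product are again parabolic (see \cite{MR3365774}), $P$ is parabolic, and as it contains the given finite-index subgroup of $g \langle \Phi \rangle g^{-1}$, it has finite index in $g \langle \Phi \rangle g^{-1}$. Viewing $g \langle \Phi \rangle g^{-1} \simeq \langle \Phi \rangle$ as the graph product over $\Phi$ and applying Lemma~\ref{lem:WhenFiniteIndex} inside it, I would write $\Phi = \Theta \ast \Xi$, where $\Theta$ is the support of $P$ and $\Xi$ is a complete graph all of whose vertex-groups are finite. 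As $\Xi \subseteq \mathrm{link}_\Phi(\Theta)$ we have $\mathrm{star}_\Phi(\Theta) = \Phi$, so $\langle \Theta \rangle$ is normal in $\langle \Phi \rangle$ and the conjugator can be absorbed: $P = g \langle \Theta \rangle g^{-1}$. Lemma~\ref{lem:ParabolicInclusion} applied to $P \subseteq h \langle \Psi \rangle h^{-1}$ then gives $\Theta \subseteq \Psi$. Next I would use the maximality of $\Phi$: since $\mathrm{star}_\Gamma(\Theta) = \Theta \ast \mathrm{link}_\Gamma(\Theta)$ is a join containing $\Phi$, it must equal $\Phi$, whence $\mathrm{link}_\Gamma(\Theta) = \Xi$. (Here I assume $\Theta \neq \emptyset$, which holds whenever $g\langle\Phi\rangle g^{-1}$ is infinite, the only case of interest; the sub-case $\Xi = \emptyset$ is immediate, since then $g\langle\Phi\rangle g^{-1} = P \subseteq h\langle\Psi\rangle h^{-1}$ and maximality of $\Phi$ forces $\Phi = \Psi$ at once.)

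The heart of the argument is the claim that $\Psi \subseteq \Phi$. Suppose not, and pick a vertex $w \in \Psi \setminus \Phi$; since $\Phi = \mathrm{star}_\Gamma(\Theta)$, this says $w \notin \Theta$ and $w$ is not adjacent to all of $\Theta$. Fix a join decomposition $\Psi = \Psi_1 \ast \Psi_2$ with $w \in \Psi_1$. I would first check that $Y := \Phi \cap \Psi_2$ is non-empty, by a short dichotomy: either $\Theta \cap \Psi_2 \neq \emptyset$, so that $Y \supseteq \Theta \cap \Psi_2$; or $\Theta \subseteq \Psi_1$, in which case every vertex of $\Psi_2$ is adjacent to all of $\Psi_1 \supseteq \Theta$, hence lies in $\mathrm{link}_\Gamma(\Theta) = \Xi \subseteq \Phi$, giving $Y \supseteq \Psi_2 \neq \emptyset$. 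I would then verify that the bipartition $\Phi \cup \{w\} = X \ast Y$ with $X = (\Phi \cup \{w\}) \setminus \Psi_2$ is a join: each $y \in Y$ is adjacent to $w$ and to the vertices of $\Phi \cap \Psi_1$ (opposite factors of $\Psi$), and to the vertices of $\Phi \setminus \Psi$, which lie in $\Xi$ and hence are adjacent to $y$ because $\Xi$ is complete and joined to $\Theta$. Thus $\Phi \cup \{w\}$ is a join strictly larger than $\Phi$, contradicting the maximality of $\Phi$.

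Hence $\Psi \subseteq \Phi$, and since $\Psi$ is a maximal join contained in the join $\Phi$, I conclude $\Psi = \Phi$. It then remains to match the conjugators: Lemma~\ref{lem:ParabolicInclusion} applied to $g \langle \Theta \rangle g^{-1} \subseteq h \langle \Psi \rangle h^{-1}$ yields $h^{-1} g \in \langle \Psi \rangle \cdot \langle \mathrm{star}_\Gamma(\Theta) \rangle = \langle \Phi \rangle$, so that $g \langle \Phi \rangle g^{-1} = h \langle \Phi \rangle h^{-1} = h \langle \Psi \rangle h^{-1}$, as wanted. The step I expect to be the main obstacle is the passage from the one-sided finite-index containment to the equality $\Psi = \Phi$: the naive expectation that a finite-index subgroup of one parabolic lying in another forces containment of the whole parabolic fails in the presence of finite vertex-groups, because the finite factor $\langle \Xi \rangle$ may escape $h\langle\Psi\rangle h^{-1}$. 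What rescues the statement is precisely the maximality of both joins, distilled into the combinatorial impossibility of the vertex $w$ above.
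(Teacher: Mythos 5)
Your proof is correct, and while its opening mirrors the paper's (the intersection $P$ is parabolic by \cite{MR3365774}, has finite index in $g\langle\Phi\rangle g^{-1}$, and Lemma~\ref{lem:WhenFiniteIndex} yields $\Phi=\Theta\ast\Xi$ with $\Xi$ a clique of finite vertex-groups), the second half takes a genuinely different route. The paper disposes of $\Xi$ immediately: it asserts that since $\Phi$ is a maximal join and decomposes as a join with $\Lambda\,(=\Theta)$ as a factor, one must have $\Phi=\Lambda$; it then gets $k\langle\Phi\rangle k^{-1}=g\langle\Phi\rangle g^{-1}$ from Corollary~\ref{cor:InclusionParabolic}, deduces $g\langle\Phi\rangle g^{-1}\leq h\langle\Psi\rangle h^{-1}$, and finishes with Lemma~\ref{lem:ParabolicInclusion} and Corollary~\ref{cor:InclusionParabolic} again. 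You instead keep the possible clique factor $\Xi$ in play, identify $\Phi=\mathrm{star}_\Gamma(\Theta)$ from maximality, and prove $\Psi\subseteq\Phi$ directly by the combinatorial contradiction that a vertex $w\in\Psi\setminus\Phi$ would enlarge the join $\Phi$ to $X\ast Y$ --- an argument I have checked and which is sound, including the dichotomy showing $Y=\Phi\cap\Psi_2\neq\emptyset$. What your approach buys is precisely the step you flag as the obstacle: maximality of $\Phi$ as a join says nothing a priori about its \emph{internal} decompositions, so the paper's inference ``$\Phi=\Lambda\ast\Xi$ and $\Phi$ maximal, hence $\Phi=\Lambda$'' is not justified as written when $\Xi\neq\emptyset$ (and indeed the statement itself fails in the degenerate situation where $\langle\Phi\rangle$ is finite, e.g.\ a maximal join that is an edge with $\mathbb{Z}_2$ vertex-groups plus an isolated vertex); your argument closes this gap under the harmless additional hypothesis $\Theta\neq\emptyset$, i.e.\ $g\langle\Phi\rangle g^{-1}$ infinite, which holds in all of the paper's applications. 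The trade-off is length: the paper's version is two lines of Corollary~\ref{cor:InclusionParabolic} once the reduction $\Lambda=\Phi$ is granted, whereas yours requires the explicit join-extension argument, but yours is the one that actually establishes the lemma in the stated generality (for infinite $\langle\Phi\rangle$).
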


\begin{proof}
According to \cite{MR3365774} (see also \cite{Mediangle}), an intersection of two parabolic subgroups is again a parabolic subgroup, so there exist $k \in \Gamma \mathcal{G}$ and $\Lambda \subset \Gamma$ such that
$$g \langle \Phi \rangle g^{-1 } \cap h \langle \Psi \rangle h^{-1}= k \langle \Lambda \rangle k^{-1}.$$
Since $g \langle \Phi \rangle g^{-1}$ contains a finite-index subgroup that is also contained in $h \langle \Psi \rangle h^{-1}$, necessarily $k \langle \Lambda \rangle k^{-1}$ has finite index in $g \langle \Phi \rangle g^{-1}$. Notice that, as a consequence of Lemma~\ref{lem:ParabolicInclusion}, $\Lambda \subset \Phi$ and $g^{-1}k$ belongs to $\langle \mathrm{star}(\Phi) \rangle$. This implies that $\langle \Lambda \rangle$ is conjugate in $\langle \Phi \rangle$ (by an element of $\langle \Phi \rangle$) to a finite-index subgroup. Since $\langle \Lambda \rangle$ then must have finite index in $\langle \Phi \rangle$, we deduce from Lemma~\ref{lem:WhenFiniteIndex} that $\Phi$ decomposes as a join with $\Lambda$ as a factor. But $\Phi$ is by assumption a maximal join in $\Gamma$, so we must have $\Phi = \Lambda$. So far, we have proved that
$$g \langle \Phi \rangle g^{-1 } \cap h \langle \Psi \rangle h^{-1}= k \langle \Phi \rangle k^{-1}.$$
From the inclusion $k \langle \Phi \rangle k^{-1} \leq g \langle \Phi \rangle g^{-1}$, we deduce from Corollary~\ref{cor:InclusionParabolic} that $k \langle \Phi \rangle k^{-1}= g \langle \Phi \rangle g^{-1}$. Then, the centred equality above amounts to saying that $g \langle \Phi \rangle g^{-1}  \leq h \langle \Psi \rangle h^{-1}$. We know from Lemma~\ref{lem:ParabolicInclusion} that $\Phi \subset \Psi$. But $\Psi$ is a maximal join in $\Gamma$, so we must have $\Phi = \Psi$. Then, we conclude from Corollary~\ref{cor:InclusionParabolic} that $g \langle \Phi \rangle g^{-1} =  h \langle \Psi \rangle h^{-1}$, as desired.
\end{proof}

\paragraph{Centralisers in graph products.} For future use, we record the structure of centralisers in graph products of groups:

\begin{prop}[\cite{MR2320444}]\label{prop:CentralisersGP}
Let $\Gamma$ be a graph and $\mathcal{G}$ a collection of groups indexed by $\Gamma$. Fix a graphically cyclically reduced element $g \in \Gamma \mathcal{G}$ and decompose its support as a join 
$$\mathrm{supp}(g)= \Phi_1 \ast \cdots \ast \Phi_r \ast \Psi_1 \ast \cdots \ast \Psi_s$$ 
such that $\Phi_i$ is reduced to a single vertex for every $1 \leq i \leq r$ and such that $\Psi_i$ is an irreducible subgraph with at least two vertices for every $1 \leq i \leq s$. Write $g$ as a graphically reduced word $a_1 \cdots a_r \cdot b_1 \cdots b_s$ such that $\mathrm{supp}(a_i)= \Phi_i$ for every $1 \leq i \leq r$ and $\mathrm{supp}(b_i)= \Psi_i$ for every $1 \leq i \leq s$. Then the centraliser of $g$ in $\Gamma \mathcal{G}$ is
$$C_{\Gamma \mathcal{G}} (g)= C_{\langle \Phi_1 \rangle}(a_1) \times \cdots \times C_{\langle \Phi_r \rangle}(a_r) \times \langle h_1 \rangle \times \cdots \times \langle h_s \rangle \times \langle \mathrm{link}(\mathrm{supp}(g)) \rangle$$
where each $h_i$ is a primitive element of $\langle \Psi_i \rangle$ such that $b_i \in \langle h_i \rangle$. 
\end{prop}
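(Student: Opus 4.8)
The plan is to prove the two inclusions of the asserted equality separately: first that the displayed product is a well-defined direct product sitting inside the centraliser, and then that it exhausts it.

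For the inclusion ``$\supseteq$'', I would first record that, because $\mathrm{supp}(g)$ decomposes as the join $\Phi_1\ast\cdots\ast\Phi_r\ast\Psi_1\ast\cdots\ast\Psi_s$ and $\mathrm{link}(\mathrm{supp}(g))$ is joined to all of $\mathrm{supp}(g)$, the subgroups $\langle\Phi_1\rangle,\ldots,\langle\Phi_r\rangle,\langle\Psi_1\rangle,\ldots,\langle\Psi_s\rangle,\langle\mathrm{link}(\mathrm{supp}(g))\rangle$ are pairwise-commuting direct factors of $\langle\mathrm{star}(\mathrm{supp}(g))\rangle$. Consequently
$$C_{\langle\Phi_1\rangle}(a_1)\times\cdots\times C_{\langle\Phi_r\rangle}(a_r)\times\langle h_1\rangle\times\cdots\times\langle h_s\rangle\times\langle\mathrm{link}(\mathrm{supp}(g))\rangle$$
is genuinely an internal direct product in $\Gamma\mathcal{G}$, and each factor centralises $g=a_1\cdots a_r b_1\cdots b_s$: the factor $C_{\langle\Phi_i\rangle}(a_i)$ commutes with $a_i$ by definition and with every other syllable by the join relations; $\langle h_j\rangle$ commutes with $b_j$ since $b_j\in\langle h_j\rangle$, and with the remaining syllables by the join relations; and $\langle\mathrm{link}(\mathrm{supp}(g))\rangle$ commutes with all of $\langle\mathrm{supp}(g)\rangle\ni g$. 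This yields ``$\supseteq$''.

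For ``$\subseteq$'' I would proceed by two reductions followed by the main case. Write $S=\mathrm{supp}(g)$ and take $w\in C_{\Gamma\mathcal{G}}(g)$. Since $g$ is graphically cyclically reduced, the smallest parabolic subgroup containing it (which exists by \cite{MR3365774}) is $\langle S\rangle$; as conjugation permutes parabolic subgroups preserving inclusion, the relation $wgw^{-1}=g$ forces $w\langle S\rangle w^{-1}=\langle S\rangle$, and Lemma~\ref{lem:ParabolicInclusion} then gives $w\in\langle S\rangle\cdot\langle\mathrm{star}(S)\rangle=\langle\mathrm{star}(S)\rangle=\langle S\rangle\times\langle\mathrm{link}(S)\rangle$. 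Combined with the inclusion $\langle\mathrm{link}(S)\rangle\leq C_{\Gamma\mathcal{G}}(g)$ already established above, this proves $C_{\Gamma\mathcal{G}}(g)=C_{\langle S\rangle}(g)\times\langle\mathrm{link}(S)\rangle$, reducing the problem to computing the centraliser inside $\langle S\rangle$. Next, the join decomposition of $S$ makes $\langle S\rangle$ the direct product $\prod_i\langle\Phi_i\rangle\times\prod_j\langle\Psi_j\rangle$, with $g$ the corresponding tuple $(a_1,\ldots,a_r,b_1,\ldots,b_s)$; since the centraliser of an element of a direct product is the product of its coordinate-wise centralisers, $C_{\langle S\rangle}(g)=\prod_i C_{\langle\Phi_i\rangle}(a_i)\times\prod_j C_{\langle\Psi_j\rangle}(b_j)$.

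The heart of the proof, and the step I expect to be the main obstacle, is to show that for each $j$ one has $C_{\langle\Psi_j\rangle}(b_j)=\langle h_j\rangle$, where $\Psi_j$ is irreducible with at least two vertices and $b_j$ is cyclically reduced with full support $\Psi_j$. Here the first reduction is vacuous: irreducibility of $\Psi_j$ (it is not a join, and having at least two vertices it is not complete either) forces $\mathrm{link}_{\Psi_j}(\Psi_j)=\emptyset$, so a genuinely different argument is required. My plan is to argue with the graphically reduced normal form: a large power $b_j^N$ is represented by the $N$-fold concatenation of a cyclically reduced word for $b_j$, and this concatenation stays reduced precisely because $b_j$ is cyclically reduced (which also shows $b_j$ has infinite order, since $\Psi_j$ non-complete provides two non-adjacent vertices in $\mathrm{supp}(b_j)$). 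Writing $w\in C_{\langle\Psi_j\rangle}(b_j)=C_{\langle\Psi_j\rangle}(b_j^N)$ in reduced form and comparing the reduced forms of the two sides of $wb_j^N=b_j^Nw$ for $N$ much larger than the length of $w$, one sees that $w$ must translate the periodic bi-infinite word $\cdots b_jb_jb_j\cdots$ along itself; the crucial point is that irreducibility of $\Psi_j$ leaves no shuffling freedom transverse to the period, so the only elements realising such a translation are the powers of the primitive root $h_j$ of $b_j$. The delicate technical points are the existence of a well-defined primitive root $h_j$ and the exclusion of any ``transverse'' centralising element -- exactly where the irreducibility hypothesis (as opposed to a general graph) is used. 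Alternatively, one may invoke the acylindrical hyperbolicity of graph products over irreducible graphs to see that $b_j$ is a loxodromic WPD element with virtually cyclic centraliser, and then upgrade this to $\langle h_j\rangle$ by a torsion analysis.
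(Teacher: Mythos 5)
The paper does not prove this proposition at all: it is quoted directly from \cite{MR2320444} (Barkauskas's theorem on centralisers in graph products), so there is no internal proof to compare against. Your reductions are correct and match the standard architecture of that proof: the inclusion ``$\supseteq$'' is routine; the passage from $C_{\Gamma\mathcal{G}}(g)$ to $C_{\langle S\rangle}(g)\times\langle\mathrm{link}(S)\rangle$ via the parabolic closure of a cyclically reduced element and Lemma~\ref{lem:ParabolicInclusion} is sound (and correctly uses the hypothesis that $g$ is graphically \emph{cyclically} reduced, without which the parabolic closure need not be $\langle\mathrm{supp}(g)\rangle$); and splitting the centraliser over the direct factors of $\langle S\rangle$ coming from the join decomposition is immediate.

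The genuine gap is that the step you yourself flag as ``the heart of the proof'' --- $C_{\langle\Psi_j\rangle}(b_j)=\langle h_j\rangle$ for $\Psi_j$ irreducible with at least two vertices --- is precisely the content of the cited theorem, and your treatment of it is a sketch rather than a proof. Three things are asserted without argument: (1) the existence and uniqueness of a primitive root $h_j$ with $b_j\in\langle h_j\rangle$, which in a general graph product (with arbitrary, possibly torsion, vertex groups) already requires the periodicity analysis you are trying to use it for; (2) the claim that ``irreducibility leaves no shuffling freedom transverse to the period'', which is exactly the combinatorial core of Barkauskas's argument and cannot be waved at --- if $\Psi_j$ were a join the statement fails, so the proof must locate precisely where non-joinness enters the normal-form comparison; (3) in the alternative route via acylindrical hyperbolicity, one only gets $C(b_j)\leq E(b_j)$ with $\langle h_j\rangle$ of finite index, and the ``torsion analysis'' upgrading this to equality is not trivial: one must rule out finite-order elements in the centraliser (e.g.\ by noting that a torsion element of $\langle\Psi_j\rangle$ has parabolic closure a conjugate of a clique subgroup $k\langle C\rangle k^{-1}$, that $b_j$ would then normalise it and hence lie in $k\langle\mathrm{star}(C)\rangle k^{-1}$, forcing $\Psi_j\subset\mathrm{star}(C)$ and contradicting irreducibility unless $C=\emptyset$) and also exclude elements inverting $b_j^n$, as happens for reflections in $\mathbb{D}_\infty$. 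As it stands the proposal reproduces the easy outer layers of the theorem and defers its actual content.
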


\noindent
As a first application of this description of centralisers, we observe that, in many graph products, the centraliser of a non-trivial power of an element is pretty much the same as the centraliser of the element itself. More formally:

\begin{definition}
A group $G$ has \emph{stable centralisers} if, for all $g \in G$ and $k \geq 1$, $C(g)$ equals $C(g^k)$. The group has \emph{almost stable centralisers} if, for all $g \in G$ and $k \geq 1$, $C(g)$ has finite index in $C(g^k)$. 
\end{definition}

\noindent
As an easy application of Proposition~\ref{prop:CentralisersGP}:

\begin{lemma}\label{lem:GPstableCent}
Let $\Gamma$ be a graph and $\mathcal{G}$ a collection of torsion-free groups indexed by $\Gamma$. If the groups in $\mathcal{G}$ have (almost) stable centralisers, then $\Gamma \mathcal{G}$ has (almost) stable centralisers. 
\end{lemma}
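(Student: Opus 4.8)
The plan is to prove the statement directly from the centraliser formula in Proposition~\ref{prop:CentralisersGP}, reducing everything to the behaviour of centralisers in the individual vertex-groups. The claim is that (almost) stable centralisers is inherited from vertex-groups to the whole graph product, under the torsion-free hypothesis. So I want to fix an arbitrary $g \in \Gamma\mathcal{G}$ and an integer $k \geq 1$, and compare $C(g)$ with $C(g^k)$ by expressing both via Proposition~\ref{prop:CentralisersGP}.

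First I would reduce to the case where $g$ is graphically cyclically reduced. Since centralisers behave well under conjugation, namely $C(h g h^{-1}) = h\, C(g)\, h^{-1}$ and likewise $C(hg^kh^{-1}) = h\, C(g^k) h^{-1}$, replacing $g$ by a graphically cyclically reduced conjugate changes both $C(g)$ and $C(g^k)$ by the same conjugation, which preserves equality and finite index. So I may assume $g$ is graphically cyclically reduced. The key elementary point to establish next is that $g^k$ is then also graphically cyclically reduced with the \emph{same support} as $g$, and that the join decomposition
$$\mathrm{supp}(g) = \Phi_1 \ast \cdots \ast \Phi_r \ast \Psi_1 \ast \cdots \ast \Psi_s$$
of Proposition~\ref{prop:CentralisersGP} is unchanged. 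Writing $g = a_1 \cdots a_r \cdot b_1 \cdots b_s$ as in that proposition, I get $g^k = a_1^k \cdots a_r^k \cdot b_1^k \cdots b_s^k$ (the factors commute, being supported on a join), and this is the corresponding graphically reduced word for $g^k$. Here the torsion-free hypothesis is essential: it guarantees $a_i^k \neq 1$ so that $\mathrm{supp}(a_i^k) = \Phi_i$ and $\mathrm{supp}(b_i^k) = \Psi_i$, keeping the supports and hence the join decomposition intact.

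With this in hand I can write out both centralisers using Proposition~\ref{prop:CentralisersGP}:
$$C(g) = C_{\langle \Phi_1\rangle}(a_1) \times \cdots \times C_{\langle \Phi_r\rangle}(a_r) \times \langle h_1\rangle \times \cdots \times \langle h_s\rangle \times \langle \mathrm{link}(\mathrm{supp}(g))\rangle,$$
and the analogous expression for $C(g^k)$ with each $a_i$ replaced by $a_i^k$ and each primitive $h_i$ possibly replaced by a primitive element $h_i'$ with $b_i^k \in \langle h_i'\rangle$. The last factor $\langle \mathrm{link}(\mathrm{supp}(g))\rangle$ is identical in both since the support is unchanged. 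For the cyclic factors $\langle h_i\rangle$, since $b_i \in \langle h_i\rangle$ and $b_i^k \in \langle h_i\rangle$ as well, primitivity forces $h_i' = h_i^{\pm 1}$, so $\langle h_i\rangle = \langle h_i'\rangle$ — these factors agree on the nose. Thus the comparison collapses entirely to the single-vertex factors: $C(g) = C(g^k)$ precisely when $C_{\langle\Phi_i\rangle}(a_i) = C_{\langle\Phi_i\rangle}(a_i^k)$ for each $i$, and the finite-index version reduces to the corresponding finite-index statement. Since each $\langle \Phi_i\rangle$ is a vertex-group and the $a_i$ lie in it, the hypothesis that vertex-groups have (almost) stable centralisers delivers exactly this.

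I expect the main obstacle to be the bookkeeping in the support-invariance step, specifically verifying rigorously that $g^k$ is graphically cyclically reduced with the same join decomposition, and that the primitive generators $h_i$ of the irreducible factors are unaffected by passing to the $k$-th power. This is where torsion-freeness is used in an essential way, and one must be slightly careful that taking the $k$-th power of the irreducible part $b_i$ does not shrink its support or alter the primitive root it generates; the point is that $\langle b_i\rangle$ and $\langle b_i^k\rangle$ generate the same maximal cyclic subgroup, so the same $h_i$ works. Once this is pinned down, the conclusion follows mechanically from the product structure of the centralisers.
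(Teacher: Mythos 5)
Your proposal is correct and follows essentially the same route as the paper: conjugate to a graphically cyclically reduced element, observe that torsion-freeness lets Proposition~\ref{prop:CentralisersGP} apply to $g^k=a_1^k\cdots a_r^k\cdot b_1^k\cdots b_s^k$ with the same join decomposition, primitive elements $h_i$, and link factor, and reduce the comparison to the vertex-group factors $C_{\langle\Phi_i\rangle}(a_i)$ versus $C_{\langle\Phi_i\rangle}(a_i^k)$. Your extra care about why the $h_i$ are unchanged is a point the paper passes over silently, but the argument is the same.
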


\begin{proof}
Let $g \in \Gamma \mathcal{G}$ be an element. Up to conjugating $g$, we assume for convenience that it is graphically cyclically reduced. Decompose $\mathrm{supp}(g)$ as a join $\Phi_1 \ast \cdots \ast \Phi_r \ast \Psi_1 \ast \cdots \ast \Psi_s$ and write $g$ as a product $a_1 \cdots a_r \cdot b_1 \cdots b_s$ as in Proposition~\ref{prop:CentralisersGP}. So 
$$C_{\Gamma \mathcal{G}} (g)= C_{\langle \Phi_1 \rangle}(a_1) \times \cdots \times C_{\langle \Phi_r \rangle}(a_r) \times \langle h_1 \rangle \times \cdots \times \langle h_s \rangle \times \langle \mathrm{link}(\mathrm{supp}(g)) \rangle$$
Given a $k \geq 1$, because vertex-groups are torsion-free, Proposition~\ref{prop:CentralisersGP} applies to the decomposition $g^k=a_1^k \cdots a_r^k\cdot b_1^k \cdots b_s^k$ and shows that 
$$C_{\Gamma \mathcal{G}} (g^k)= C_{\langle \Phi_1 \rangle}(a_1^k) \times \cdots \times C_{\langle \Phi_r \rangle}(a_r^k) \times \langle h_1 \rangle \times \cdots \times \langle h_s \rangle \times \langle \mathrm{link}(\mathrm{supp}(g)) \rangle.$$
If vertex-groups have stable centralisers (resp.\ almost stable centralisers), then each $C_{\langle \Phi_i \rangle} (a_i)$ agrees with (resp.\ has finite index in) $C_{\langle \Phi_i \rangle}(a_i^k)$. This implies that the centraliser of $g^k$ in $\Gamma \mathcal{G}$ agrees with (resp.\ has finite index in) the centraliser of $g$ in $\Gamma \mathcal{G}$. 
\end{proof}

\begin{cor}\label{cor:StableCent}
Right-angled Artin groups, as well as their subgroups, have stable centralisers.
\end{cor}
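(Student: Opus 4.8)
The plan is to realise right-angled Artin groups as graph products of copies of $\mathbb{Z}$ and then invoke Lemma~\ref{lem:GPstableCent}, before observing separately that the stable-centralisers property is inherited by passing to subgroups.

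First I would check that $\mathbb{Z}$ has stable centralisers: since $\mathbb{Z}$ is abelian, the centraliser of every element (including every power) is the whole group, so trivially $C(g)=\mathbb{Z}=C(g^k)$ for all $g$ and all $k\geq 1$. Moreover $\mathbb{Z}$ is torsion-free. A right-angled Artin group $A(\Gamma)$ is exactly the graph product $\Gamma\mathcal{G}$ in which every vertex-group is $\mathbb{Z}$, so the hypotheses of Lemma~\ref{lem:GPstableCent} are satisfied and we conclude directly that $A(\Gamma)$ has stable centralisers.

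It then remains to pass to subgroups, and the key point is that taking centralisers commutes with intersecting by a subgroup. Indeed, for a subgroup $H\leq G$, an element $h\in H$, and an integer $k\geq 1$, one has $C_H(h)=H\cap C_G(h)$ and $C_H(h^k)=H\cap C_G(h^k)$. If $G$ has stable centralisers, then $C_G(h)=C_G(h^k)$, whence $C_H(h)=C_H(h^k)$. Therefore every subgroup of a group with stable centralisers again has stable centralisers, and in particular so does every subgroup of a right-angled Artin group.

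I expect no serious obstacle: the statement is essentially a formal consequence of Lemma~\ref{lem:GPstableCent} combined with the elementary remark that centralisers in a subgroup are intersections of ambient centralisers with that subgroup. The only mild point to verify is that the vertex-group $\mathbb{Z}$ genuinely meets the hypotheses of Lemma~\ref{lem:GPstableCent}, namely torsion-freeness and stability of its centralisers, both of which are immediate.
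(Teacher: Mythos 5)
Your proposal is correct and matches the paper's intended argument: the corollary is stated as an immediate application of Lemma~\ref{lem:GPstableCent} to the graph product with all vertex-groups $\mathbb{Z}$, and the passage to subgroups via $C_H(h)=H\cap C_G(h)$ is exactly the elementary observation needed.
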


\noindent
It is worth mentioning that there exist also many graph products that do not have almost stable centralisers. This is the case, for instance, of the product $\mathbb{D}_\infty \times \mathbb{D}_\infty$ of two infinite dihedral groups. Indeed, given an infinite-order element $a \in \mathbb{D}_\infty$ and an element of order two $b \in \mathbb{D}_\infty$, the centraliser of $(a,b)$ is isomorphic to $\mathbb{Z} \times \mathbb{Z}_2$ but the centraliser of its square $(a^2,1)$ is isomorphic to $\mathbb{Z} \times \mathbb{Z}$.

\paragraph{Virtual centres.} Motivating by the observation that the centres of a group and of its finite-index subgroups may be quite different, we introduce the following definition:

\begin{definition}
The \emph{virtual centre} $\mathrm{VZ}(G)$ of a group $G$ is the set of the elements that centralise some finite-index subgroups of $G$.
\end{definition}

\noindent
Notice that:

\begin{lemma}
The virtual centre of a group is a normal subgroup.
\end{lemma}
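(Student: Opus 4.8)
The plan is to verify directly that $\mathrm{VZ}(G)$ is closed under multiplication, inversion, and conjugation. Throughout I would rely on two elementary properties of finite-index subgroups: the intersection of two finite-index subgroups of $G$ is again of finite index, and any conjugate $xHx^{-1}$ of a finite-index subgroup $H$ is of finite index (indeed of the same index as $H$).

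First I would check that $\mathrm{VZ}(G)$ is a subgroup. The identity lies in $\mathrm{VZ}(G)$ since it centralises all of $G$. If $g$ centralises a finite-index subgroup $H$, then $g^{-1}$ centralises the same $H$, so $\mathrm{VZ}(G)$ is closed under inversion. For closure under multiplication, suppose $g_1$ centralises a finite-index subgroup $H_1$ and $g_2$ centralises a finite-index subgroup $H_2$. Then $H_1 \cap H_2$ has finite index in $G$ and is centralised by both $g_1$ and $g_2$, hence by their product $g_1 g_2$; thus $g_1 g_2 \in \mathrm{VZ}(G)$.

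It remains to prove normality. Let $g \in \mathrm{VZ}(G)$ centralise a finite-index subgroup $H$, and let $x \in G$ be arbitrary. I would show that $xgx^{-1}$ centralises the finite-index subgroup $xHx^{-1}$: for any $h \in H$ one computes $xgx^{-1} \cdot xhx^{-1} = x(gh)x^{-1} = x(hg)x^{-1} = xhx^{-1}\cdot xgx^{-1}$, using $gh = hg$. Hence $xgx^{-1} \in \mathrm{VZ}(G)$, and $\mathrm{VZ}(G)$ is normal.

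Since every step is an immediate manipulation, there is no genuine obstacle here; the only point requiring a moment of care is the closure under multiplication, where one must pass to the common finite-index subgroup $H_1 \cap H_2$ rather than attempting to use $H_1$ or $H_2$ separately.
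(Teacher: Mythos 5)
Your proof is correct and follows essentially the same route as the paper: both verify closure by intersecting the two finite-index centralised subgroups, and both verify normality by observing that $xgx^{-1}$ centralises $xHx^{-1}$. The only cosmetic difference is that the paper checks closure under $a \mapsto ab^{-1}$ in one step whereas you treat inversion and multiplication separately.
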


\begin{proof}
Let $G$ be a group and $a,b \in G$ two elements. Notice that the following two observations hold:
\begin{itemize}
	\item If $a$ (resp.\ $b$) commutes with all the elements of a finite-index subgroup $H \leq G$ (resp.\ $K \leq G$), then $ab^{-1}$ commutes with all the elements of the finite-index subgroup $H \cap K$. Hence $ab^{-1} \in \mathrm{VZ}(G)$. 
	\item If $a$ commutes with all the elements of a finite-index subgroup $H \leq G$, then $bab^{-1}$ commutes with all the elements of the finite-index subgroup $bHb^{-1}$. Hence $bab^{-1} \in \mathrm{VZ}(G)$. 
\end{itemize}
We conclude that $\mathrm{VZ}(G)$ is indeed a normal subgroup of $G$. 
\end{proof}

\noindent
For future use, let us describe virtual centres of graph products of groups:

\begin{lemma}\label{lem:VirtualCentreGP}
Let $\Gamma$ be a finite graph and $\mathcal{G}$ a collection of groups indexed by $\Gamma$. Decompose $\Gamma$ as a join
$$\Gamma= \{ u_1 \} \ast \cdots \ast \{u_r\} \ast \{a_1,b_1\} \ast \cdots \ast \{a_s,b_s\} \ast \Lambda_1 \ast \cdots \ast \Lambda_n$$
where $u_1, \ldots, u_r \in \Gamma$ are single vertices, where each $a_i,b_i \in \Gamma$ are two non-adjacent vertices labelled by $\mathbb{Z}_2$, and where each $\Lambda_i$ is an irreducible subgraph containing at least two vertices not both labelled by $\mathbb{Z}_2$. Then
$$\mathrm{VZ}(\Gamma \mathcal{G})= \mathrm{VZ}(\langle u_1 \rangle) \times \cdots \times \mathrm{VZ}(\langle u_r \rangle) \times \langle a_1b_1 \rangle \times \cdots \times \langle a_sb_s \rangle.$$
\end{lemma}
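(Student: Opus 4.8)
The plan is to reduce the computation to the individual join factors and then treat the three types of factors separately, the only genuinely delicate case being the irreducible factors $\Lambda_i$. First I would record the convenient reformulation that, for any group $G$, one has $g \in \mathrm{VZ}(G)$ if and only if the centraliser $C_G(g)$ has finite index in $G$: if $g$ centralises a finite-index subgroup $H$ then $H \leq C_G(g)$, so $C_G(g)$ has finite index, while conversely $g$ always centralises $C_G(g)$. Next I would verify that $\mathrm{VZ}$ distributes over finite direct products, i.e.\ $\mathrm{VZ}(A \times B) = \mathrm{VZ}(A) \times \mathrm{VZ}(B)$; the nontrivial inclusion uses that any finite-index subgroup $H \leq A \times B$ contains the product $(H \cap A) \times (H \cap B)$ of two finite-index subgroups, so that an element of $\mathrm{VZ}(A \times B)$ centralises a finite-index \emph{product} subgroup and hence projects into $\mathrm{VZ}(A)$ and $\mathrm{VZ}(B)$. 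Since the displayed join decomposition of $\Gamma$ yields the direct product decomposition
$$\Gamma \mathcal{G} = \langle u_1 \rangle \times \cdots \times \langle u_r \rangle \times \langle a_1, b_1 \rangle \times \cdots \times \langle a_s, b_s \rangle \times \langle \Lambda_1 \rangle \times \cdots \times \langle \Lambda_n \rangle,$$
this reduces the statement to computing the virtual centre of each factor.

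The single-vertex factors contribute their virtual centres $\mathrm{VZ}(\langle u_i \rangle)$ verbatim. For a pair $\{a_i, b_i\}$ of non-adjacent vertices both labelled by $\mathbb{Z}_2$, the factor $\langle a_i, b_i \rangle$ is the infinite dihedral group $\langle a_i \rangle \ast \langle b_i \rangle$; here a short direct computation shows that the powers of $a_ib_i$ have finite-index (infinite cyclic) centraliser while every reflection has centraliser of order two, whence $\mathrm{VZ}(\langle a_i, b_i \rangle) = \langle a_i b_i \rangle$. Thus it remains only to show that each irreducible factor contributes nothing.

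The heart of the argument is therefore to prove that $\mathrm{VZ}(\langle \Lambda \rangle)$ is trivial whenever $\Lambda$ is irreducible with at least two vertices and is not a pair of $\mathbb{Z}_2$'s. Suppose $1 \neq g \in \mathrm{VZ}(\langle \Lambda \rangle)$; since $\mathrm{VZ}$ is conjugation-invariant and conjugation preserves the index of the centraliser, I may assume $g$ is graphically cyclically reduced and apply Proposition~\ref{prop:CentralisersGP}. The resulting centraliser lies in $\langle \mathrm{star}(\mathrm{supp}(g)) \rangle$, so by the reformulation above together with Lemma~\ref{lem:WhenFiniteIndex}, finite index forces $\Lambda = \mathrm{star}(\mathrm{supp}(g)) \ast \Xi$ with $\Xi$ complete with finite vertex-groups; irreducibility of $\Lambda$ (and $\mathrm{supp}(g) \neq \emptyset$) then forces first $\mathrm{star}(\mathrm{supp}(g)) = \Lambda$, and then, writing $\Lambda = \mathrm{supp}(g) \ast \mathrm{link}(\mathrm{supp}(g))$, an empty link, so that $\mathrm{supp}(g) = \Lambda$. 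Feeding this back into the join decomposition of $\mathrm{supp}(g)$ coming from Proposition~\ref{prop:CentralisersGP} and using irreducibility once more, the only surviving possibility is that $\mathrm{supp}(g)$ is a single irreducible factor with at least two vertices, in which case the centraliser of $g$ collapses to an infinite cyclic group. Finite index then makes $\langle \Lambda \rangle$ virtually cyclic, and I would conclude by invoking Lemma~\ref{lem:WhenVirtuallyZ}: for an irreducible graph with at least two vertices the only virtually cyclic case is exactly the excluded pair of non-adjacent $\mathbb{Z}_2$'s, a contradiction.

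The main obstacle is precisely this last step: one must carefully combine the centraliser formula with the two parabolic lemmas to squeeze the support down to all of $\Lambda$ and to cut the centraliser down to a cyclic group, and then be sure, via Lemma~\ref{lem:WhenVirtuallyZ}, that the $\mathbb{D}_\infty$ factors are the only obstruction to triviality. Once this is established, assembling the factorwise computations through the product formula of the first paragraph yields exactly the claimed description of $\mathrm{VZ}(\Gamma \mathcal{G})$.
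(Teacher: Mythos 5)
Your proof is correct, and every step you need is available in the paper. The one point worth double-checking is the identity $\mathrm{VZ}(A \times B) = \mathrm{VZ}(A) \times \mathrm{VZ}(B)$, but your justification is sound: a finite-index subgroup $H \leq A \times B$ contains the finite-index product $(H \cap A) \times (H \cap B)$, so both projections of an element of $\mathrm{VZ}(A \times B)$ centralise finite-index subgroups of the respective factors. Your treatment of the irreducible factors also holds up: irreducibility kills the complete complement $\Xi$ from Lemma~\ref{lem:WhenFiniteIndex} and then the link, forcing $\mathrm{supp}(g) = \Lambda$ and a cyclic centraliser, after which Lemma~\ref{lem:WhenVirtuallyZ} rules out everything except the excluded pair of non-adjacent $\mathbb{Z}_2$'s.

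The route differs from the paper's in organisation rather than in substance. The paper does not factor $\mathrm{VZ}$ over the product decomposition; instead it takes a graphically cyclically reduced $g \in \mathrm{VZ}(\Gamma \mathcal{G})$, applies Proposition~\ref{prop:CentralisersGP} once to the whole group, and reads off three consequences of the centraliser having finite index: each single-vertex syllable lies in the virtual centre of its vertex-group, each irreducible piece $\Psi_i$ of $\mathrm{supp}(g)$ has virtually cyclic parabolic (hence is a non-adjacent $\mathbb{Z}_2$-pair by Lemma~\ref{lem:WhenVirtuallyZ}), and $\mathrm{star}(\mathrm{supp}(g))$ has finite index (hence equals $\Gamma$ by Lemma~\ref{lem:WhenFiniteIndex}); matching this against the given join decomposition of $\Gamma$ finishes the proof. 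Your version isolates the reusable general fact that $\mathrm{VZ}$ distributes over direct products and then shows directly that each irreducible non-$\mathbb{D}_\infty$ factor has trivial virtual centre, which is arguably cleaner and makes the role of each join factor more transparent; the paper's single global application of the centraliser formula is shorter and avoids having to state the product formula for $\mathrm{VZ}$ separately. Both arguments rest on exactly the same three ingredients.
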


\begin{proof}
Let $g \in \Gamma \mathcal{G}$ be an element that belongs to the virtual centre of $\Gamma \mathcal{G}$. This amounts to saying that the centraliser of $g$ has finite index in $\Gamma \mathcal{G}$. Our goal is to prove that $g$ belongs to the subgroup
$$V:= \mathrm{VZ}(\langle u_1 \rangle) \times \cdots \times \mathrm{VZ}(\langle u_r \rangle) \times \langle a_1b_1 \rangle \times \cdots \times \langle a_sb_s \rangle.$$
Because $V$ clearly lies in the virtual centre of $\Gamma \mathcal{G}$, this will be sufficient to conclude the proof of our lemma. Notice also that $V$ is a normal subgroup, so, up to conjugating $g$, we assume without loss of generality that $g$ is graphically cyclically reduced. Following Proposition~\ref{prop:CentralisersGP}, decompose the support of $g$ as a join 
$$\mathrm{supp}(g)= \Phi_1 \ast \cdots \ast \Phi_p \ast \Psi_1 \ast \cdots \ast \Psi_q$$ 
such that $\Phi_i$ is reduced to a single vertex for every $1 \leq i \leq p$ and such that $\Psi_i$ is an irreducible subgraph with at least two vertices for every $1 \leq i \leq q$. Write $g$ as a graphically reduced word $a_1 \cdots a_p \cdot b_1 \cdots b_q$ such that $\mathrm{supp}(a_i)= \Phi_i$ for every $1 \leq i \leq p$ and $\mathrm{supp}(b_i)= \Psi_i$ for every $1 \leq i \leq q$. According to Proposition~\ref{prop:CentralisersGP}, the centraliser of $g$ in $\Gamma \mathcal{G}$ is
$$C_{\Gamma \mathcal{G}} (g)= C_{\langle \Phi_1 \rangle}(a_1) \times \cdots \times C_{\langle \Phi_r \rangle}(a_p) \times \langle h_1 \rangle \times \cdots \times \langle h_q \rangle \times \langle \mathrm{link}(\mathrm{supp}(g)) \rangle$$
where each $h_i$ is a primitive element of $\langle \Psi_i \rangle$ such that $b_i \in \langle h_i \rangle$. Since this centraliser has finite index in $\Gamma \mathcal{G}$, the following assertions hold:
\begin{itemize}
	\item for every $1 \leq i \leq p$, $C_{\langle \Phi_i \rangle}(a_i)$ has finite-index in $\langle \Phi_i \rangle$, i.e.\ $a_i \in \mathrm{VZ}(\langle \Phi_i \rangle)$;
	\item $\langle h_i \rangle$ has finite index in $\langle \Psi_i \rangle$ for every $1 \leq i \leq q$;
	\item $\langle \Phi_1 \cup \cdots \cup \Phi_p \cup \Psi_1 \cup \cdots \cup \Psi_q \cup \mathrm{link}(\mathrm{supp}(g)) \rangle$ has finite index in $\Gamma \mathcal{G}$.
\end{itemize}
The second item implies that each $\langle \Psi_i \rangle$ is virtually cyclic. According to Lemma~\ref{lem:WhenVirtuallyZ}, the only possibility is that $\Psi_i$ is given by two non-adjacent vertices both labelled by $\mathbb{Z}_2$. And the third item implies, according to Lemma~\ref{lem:WhenFiniteIndex}, that 
$$\Gamma = \Phi_1 \ast \cdots \ast \Phi_r \ast \Psi_1 \ast \cdots \ast \Psi_s \ast \mathrm{link}(\mathrm{supp}(g)).$$
It follows that $p=r$ and $q=s$; that $\mathrm{link}(\mathrm{supp}(g)) = \Lambda_1 \ast \cdots \ast \Lambda_n$; and that, up to reordering our subgraphs, $\Phi_i = \{u_i\}$ for every $1 \leq i \leq r$ and $\Psi_i = \{ a_i,b_i\}$ for every $1 \leq i \leq s$. Notice that, for every $1 \leq i \leq s$, $b_i$ is an infinite-order element of the infinite dihedral group $\langle \Psi_i \rangle = \langle a_i,b_i \rangle$, so we can take $h_i = a_ib_i$. We conclude that $g$ indeed belongs to $V$, as desired. 
\end{proof}

\begin{cor}\label{cor:FBVZ}
For every $k \geq 4$, the $\mathrm{FB}_k$ and $\mathrm{PFB}_k$ have trivial virtual centres.
\end{cor}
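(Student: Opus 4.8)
The plan is to derive both statements from the description of virtual centres of graph products given by Lemma~\ref{lem:VirtualCentreGP}, applied to the presentation of $\mathrm{FB}_k$ as the right-angled Coxeter group $C(P_{k-2}^{\mathrm{opp}})$, and then to transfer the conclusion from $\mathrm{FB}_k$ to its finite-index subgroup $\mathrm{PFB}_k$.

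First I would set $\Gamma := P_{k-2}^{\mathrm{opp}}$ with all vertex-groups equal to $\mathbb{Z}_2$, so that $\mathrm{FB}_k = \Gamma \mathcal{G}$, and analyse the join decomposition of $\Gamma$ required by Lemma~\ref{lem:VirtualCentreGP}. The vertices of $\Gamma$ are $\sigma_1, \ldots, \sigma_{k-1}$, and two of them are adjacent precisely when their indices differ by at least $2$; in particular each consecutive pair $\sigma_i, \sigma_{i+1}$ is non-adjacent. If $\Gamma$ decomposed as a non-trivial join, then any two non-adjacent vertices would have to lie in a common join factor, and since the consecutive pairs link all the indices $1, \ldots, k-1$ together, all vertices would lie in a single factor — so the join is trivial and $\Gamma$ is irreducible. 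For $k \geq 4$ this irreducible graph has $k-1 \geq 3$ vertices, hence is neither a single vertex nor a pair of non-adjacent $\mathbb{Z}_2$-vertices. Consequently, in the decomposition
$$\Gamma = \{u_1\} \ast \cdots \ast \{u_r\} \ast \{a_1,b_1\} \ast \cdots \ast \{a_s,b_s\} \ast \Lambda_1 \ast \cdots \ast \Lambda_n$$
of Lemma~\ref{lem:VirtualCentreGP} we must have $r = s = 0$ and $\Gamma = \Lambda_1$. The formula for the virtual centre then reduces to an empty product, giving $\mathrm{VZ}(\mathrm{FB}_k) = \{1\}$. It is worth noting that this is exactly where the hypothesis $k \geq 4$ enters: for $k = 3$ the graph $\Gamma$ consists of precisely two non-adjacent $\mathbb{Z}_2$-vertices, producing a factor $\{a_1,b_1\}$ and the non-trivial virtual centre $\langle \sigma_1 \sigma_2 \rangle$ of $\mathrm{FB}_3 \cong \mathbb{D}_\infty$.

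It then remains to pass from $\mathrm{FB}_k$ to $\mathrm{PFB}_k$, which is the kernel of the finite quotient $\mathrm{FB}_k \to \mathrm{Sym}(k)$ and hence has finite index in $\mathrm{FB}_k$. Here I would record the elementary but key observation that the virtual centre is monotone under finite-index inclusions: if $H \leq G$ has finite index and $h \in \mathrm{VZ}(H)$ centralises a finite-index subgroup $K \leq H$, then $K$ also has finite index in $G$, so $h \in \mathrm{VZ}(G)$; thus $\mathrm{VZ}(H) \subseteq \mathrm{VZ}(G)$. Applying this with $G = \mathrm{FB}_k$ and $H = \mathrm{PFB}_k$ gives $\mathrm{VZ}(\mathrm{PFB}_k) \subseteq \mathrm{VZ}(\mathrm{FB}_k) = \{1\}$, as desired. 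I do not anticipate a serious obstacle: the content is entirely carried by Lemma~\ref{lem:VirtualCentreGP}, and the only points demanding care are the combinatorial verification that $\Gamma$ is irreducible with at least three vertices (so that neither a single-vertex factor nor a $\mathbb{Z}_2$-pair factor survives) and the finite-index monotonicity of $\mathrm{VZ}$ used to handle the pure subgroup.
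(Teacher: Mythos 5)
Your proof is correct and follows exactly the paper's route: apply Lemma~\ref{lem:VirtualCentreGP} to $\mathrm{FB}_k = C(P_{k-2}^{\mathrm{opp}})$ (the paper leaves the irreducibility check implicit, which you rightly spell out), then use that the virtual centre of the finite-index subgroup $\mathrm{PFB}_k$ is contained in that of $\mathrm{FB}_k$. No gaps.
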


\begin{proof}
Thinking of the flat braid group $\mathrm{FB}_k$ as the right-angled Coxeter group $C(P^{\mathrm{opp}}_{k-2})$, it follows immediately from Lemma~\ref{lem:VirtualCentreGP} that $\mathrm{FB}_k$ has a trivial virtual centre as soon as $k \geq 4$. Then, notice that an element of the virtual centre of $\mathrm{PFB}_k$ also belongs to the virtual centre of $\mathrm{FB}_k$, so it must be trivial. 
\end{proof}

\paragraph{Acylindrical hyperbolicity.} In order to prove Corollaries~\ref{cor:PFBnotProd} and~\ref{cor:GPimc}, we will use a few tools coming from the theory of \emph{acylindrically hyperbolic groups}. We refer the reader to \cite{MR3966794} and references therein for more information on the subject. Recall an acylindrically hyperbolic group $G$ has a unique maximal finite normal subgroup, referred to as its \emph{finite radical}  \cite[Theorem~2.24]{MR3589159}.

\begin{prop}\label{prop:GPacyl}
Let $\Gamma$ be a finite graph and $\mathcal{G}$ a collection of groups indexed by $\Gamma$. If $\Gamma$ is not a join and has at least two vertices, then $\Gamma \mathcal{G}$ is acylindrically hyperbolic and its finite radical is trivial.
\end{prop}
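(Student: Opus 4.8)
The plan is to establish the two assertions --- acylindrical hyperbolicity and triviality of the finite radical --- separately, relying on Osin's characterisation of acylindrically hyperbolic groups (\cite{MR3966794}): a group that is not virtually cyclic and that carries a loxodromic WPD element for some action on a hyperbolic space is acylindrically hyperbolic.

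First I would check that $\Gamma \mathcal{G}$ is not virtually cyclic. Lemma~\ref{lem:WhenVirtuallyZ} enumerates every graph whose associated graph product is virtually cyclic, and in each case $\Gamma$ is either complete --- hence a join as soon as it has at least two vertices --- or an explicit join of the form $\Xi \ast \{u\}$ or $\Xi \ast \{u,v\}$ with $\Xi$ non-empty. Under our hypotheses none of these occurs, so $\Gamma \mathcal{G}$ is not virtually cyclic. (The one borderline configuration is $\Gamma = \{u,v\}$ made of two non-adjacent vertices labelled by $\mathbb{Z}_2$, which is not a join yet yields $\Gamma \mathcal{G} \cong \mathbb{D}_\infty$; this degenerate, virtually cyclic case must be understood as excluded from the statement.)

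Next I would produce the loxodromic WPD element, reproducing the mechanism already used in the proof of Lemma~\ref{lem:GPcontainedInProduct}. Since $\Gamma$ is not a join, the smallest parabolic subgroup containing $\Gamma \mathcal{G}$ is $\Gamma \mathcal{G}$ itself and $\Gamma \mathcal{G}$ is contained in no proper join-subgroup; thus \cite[Corollary~6.20]{MR3368093} supplies a simplicial tree on which $\Gamma \mathcal{G}$ acts with a WPD element $h$. As a tree is $0$-hyperbolic and $h$ acts loxodromically, Osin's criterion together with the previous paragraph shows that $\Gamma \mathcal{G}$ is acylindrically hyperbolic. This existence of a WPD element is the genuine geometric content of the proposition, and I expect it to be the main obstacle were it not already available off the shelf from \cite[Corollary~6.20]{MR3368093}; everything else is bookkeeping built on Lemmas~\ref{lem:WhenVirtuallyZ} and~\ref{lem:VirtualCentreGP}.

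Finally I would show that the finite radical $F$ is trivial. As a finite normal subgroup, $F$ is acted on by $\Gamma \mathcal{G}$ through conjugation, with finite image in $\mathrm{Aut}(F)$, so the kernel of this action is a finite-index subgroup centralising $F$; hence $F \subseteq \mathrm{VZ}(\Gamma \mathcal{G})$. Now Lemma~\ref{lem:VirtualCentreGP} computes the virtual centre from the join decomposition of $\Gamma$, and because $\Gamma$ is irreducible with at least two vertices this decomposition reduces to a single irreducible factor carrying at least two vertices (the competing single factor $\{a_1,b_1\}$ being once more the excluded $\mathbb{D}_\infty$ configuration). Consequently $\mathrm{VZ}(\Gamma \mathcal{G})$ is trivial, and therefore so is $F$.
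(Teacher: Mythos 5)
Your proof is correct (modulo the $\mathbb{D}_\infty$ caveat, on which more below) but reaches both conclusions by routes genuinely different from the paper's. For acylindrical hyperbolicity, the paper simply cites \cite[Corollary~2.13]{MR3368093} as a black box, whereas you re-derive it from Osin's criterion by combining the classification of virtually cyclic graph products (Lemma~\ref{lem:WhenVirtuallyZ}) with the WPD element supplied by \cite[Corollary~6.20]{MR3368093}; this is more work but makes the mechanism explicit, and it is exactly the mechanism the paper itself deploys later in Lemma~\ref{lem:GPcontainedInProduct}. For the finite radical, the paper takes the parabolic-subgroup route: the parabolic closure $g\langle\Theta\rangle g^{-1}$ of $R$ is a clique-subgroup, it is normalised by all of $\Gamma\mathcal{G}$, normalisers of parabolics are star-subgroups, hence $\Gamma=\mathrm{star}(\Theta)$, forcing $\Theta=\emptyset$ since $\Gamma$ is not a join. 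Your argument instead observes that any finite normal subgroup lies in the virtual centre (the kernel of the conjugation action on $F$ has finite index and centralises $F$) and then invokes the computation of $\mathrm{VZ}(\Gamma\mathcal{G})$ in Lemma~\ref{lem:VirtualCentreGP}. Both are sound; yours leans on a lemma already proved in the paper and avoids the parabolic-closure and normaliser results imported from \cite{MR3365774}, while the paper's version is self-contained relative to the parabolic machinery of Section~\ref{section:GP} and does not route through virtual centres.

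One point in your write-up deserves emphasis rather than apology: the configuration $\Gamma=\{u,v\}$ with $u,v$ non-adjacent and both labelled by $\mathbb{Z}_2$ satisfies the hypotheses as literally stated, yet $\Gamma\mathcal{G}\simeq\mathbb{D}_\infty$ is virtually cyclic and hence not acylindrically hyperbolic. The paper's proof does not address this exclusion (it is presumably absorbed into the hypotheses of the cited \cite[Corollary~2.13]{MR3368093}), so you have identified a genuine, if harmless, imprecision in the statement; in all of the paper's applications the graph in question has at least three vertices or non-$\mathbb{Z}_2$ vertex groups, so nothing downstream is affected.
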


\begin{proof}
The acylindrical hyperbolicity of $\Gamma \mathcal{G}$ is given by \cite[Corollary~2.13]{MR3368093}. It remains to verify that the finite radical $R$ of $\Gamma \mathcal{G}$ is trivial. Let $g \in \Gamma \mathcal{G}$ be an element and $\Theta \subset \Gamma$ a subgraph such that $g \langle \Theta \rangle g^{-1}$ is the unique smallest parabolic subgroup containing $R$ (see \cite[Proposition~3.10]{MR3365774}). As it is well-known that finite subgroups in graph products are contained in clique-subgroups (see for instance \cite[Theorem~2.115 and Corollary~8.7]{QM} for a geometric proof), we deduce from Lemma~\ref{lem:ParabolicInclusion} that $\Theta$ is complete. Moreover, since $\Gamma \mathcal{G}$ normalises $R$, necessarily $g\langle \Theta \rangle g^{-1}$ is normalised by $\Gamma \mathcal{G}$. But, according to \cite[Proposition~3.13]{MR3365774}, the normaliser of $g \langle \Theta \rangle g^{-1}$ is $g \langle \mathrm{star}(\Theta) \rangle g^{-1}$. It follows from Corollary~\ref{cor:InclusionParabolic} that $\Gamma= \mathrm{star}(\Theta)$. Since $\Gamma$ is not a join and contains at least two vertices, this implies that $\Theta= \emptyset$. In other words, $R$ must be trivial, as desired. 
\end{proof}

\begin{cor}\label{cor:PFBnotProd}
For every $k \geq 4$, $\mathrm{PFB}_k$ is not virtually a product of two infinite groups. 
\end{cor}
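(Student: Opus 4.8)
The plan is to derive the statement from the acylindrical hyperbolicity of $\mathrm{PFB}_k$, invoking the general principle that an acylindrically hyperbolic group cannot be virtually a direct product of two infinite groups.

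First I would check that $\mathrm{PFB}_k$ is acylindrically hyperbolic for every $k \geq 4$. Viewing $\mathrm{FB}_k$ as the right-angled Coxeter group $C(P_{k-2}^{\mathrm{opp}})$, its defining graph has $k-1 \geq 3$ vertices, and it is not a join: its complement is the connected path $P_{k-2}$, so $P_{k-2}^{\mathrm{opp}}$ admits no join decomposition. Hence Proposition~\ref{prop:GPacyl} applies and shows that $\mathrm{FB}_k$ is acylindrically hyperbolic. As $\mathrm{PFB}_k$ is the kernel of the natural morphism $\mathrm{FB}_k \to \mathrm{Sym}(k)$, it has finite index in $\mathrm{FB}_k$, and since acylindrical hyperbolicity is inherited by finite-index subgroups (a standard fact, see \cite{MR3966794}), $\mathrm{PFB}_k$ is itself acylindrically hyperbolic.

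Then I would argue by contradiction. Suppose $\mathrm{PFB}_k$ admits a finite-index subgroup $H$ isomorphic to a direct product $A \times B$ of two infinite groups. Being of finite index in the acylindrically hyperbolic group $\mathrm{PFB}_k$, the subgroup $H$ is again acylindrically hyperbolic, so it suffices to contradict the acylindrical hyperbolicity of $A \times B$ with both factors infinite. The key input here is that an infinite normal subgroup of an acylindrically hyperbolic group contains an element that is loxodromic for the ambient acylindrical action, and that such a loxodromic element has infinite order and a virtually cyclic centraliser. Applying this to the infinite normal subgroup $A \trianglelefteq A \times B$ produces a loxodromic element $g \in A$. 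Its centraliser in $A \times B$ then contains $\langle g \rangle \times B \cong \mathbb{Z} \times B$, which is not virtually cyclic since $B$ is infinite; this contradicts the virtual cyclicity of $C_{A \times B}(g)$, and the corollary follows.

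The external ingredients are exactly the two standard facts about acylindrically hyperbolic groups used above: the stability of acylindrical hyperbolicity under passage to finite-index subgroups, and the existence of loxodromic elements inside infinite normal subgroups together with the virtual cyclicity of their centralisers (both available from the theory surveyed in \cite{MR3966794}). I expect the main subtlety to be organising the final contradiction cleanly -- namely, making sure the loxodromic element produced inside $A$ has infinite order and a virtually cyclic centraliser in the full group $A \times B$, so that the infinite factor $B$ sitting inside that centraliser is genuinely incompatible with virtual cyclicity. By contrast, the graph-theoretic verification that $P_{k-2}^{\mathrm{opp}}$ is not a join, and the reduction to the finite-index product, are routine.
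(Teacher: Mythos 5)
Your proposal is correct and follows essentially the same route as the paper: both deduce acylindrical hyperbolicity of $\mathrm{FB}_k$ from Proposition~\ref{prop:GPacyl}, pass to the finite-index subgroup $\mathrm{PFB}_k$, and conclude that an acylindrically hyperbolic group cannot be virtually a product of two infinite groups (the paper simply cites \cite[Lemma~6.24]{MR3368093} for this last step, whereas you spell out the standard loxodromic-element argument).
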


\begin{proof}
For $k \geq 4$, it follows from Proposition~\ref{prop:GPacyl} that $\mathrm{FB}_k$, thought of as the right-angled Coxeter group $C(P_{k-2}^\mathrm{opp})$, is acylindrically hyperbolic. As a finite-index subgroup, $\mathrm{PFB}_k$ must be acylindrically hyperbolic as well. This prevents $\mathrm{PFB}_k$ from being virtually a product of two infinite groups, for instance as a consequence of \cite[Lemma~6.24]{MR3368093}.
\end{proof}

\section{Morphisms to products}\label{section:IMC}

\noindent
In this section, our goal is to show that a morphism between two products of groups satisfying mild assumptions has to send a factor to a factor. The main result in this direction is Lemma~\ref{lem:IMCandProduct}, using the notion of \emph{IMC generating sets} which we now define and study.

\subsection{IMC generating sets}

\noindent
In this section, we investigate a specific type of generating sets, namely:

\begin{definition}
Given an group $G$, $S \subset G$ is an IMC generating set if it satisfies the following conditions:
\begin{description}
	\item[(Independence)] for all distinct $s_1,s_2 \in S$ and all integers $p,q \geq 1$, $[s_1^p,s_2^q] \neq 1$;
	\item[(Maximal Centralisers)] for all $s \in S$ and $g \in G$, if $C(s) \subsetneq C(g)$ then $g=1$.
\end{description}
\end{definition}

\noindent
Our goal now is to show that most acylindrically hyperbolic groups admit IMC generating sets. Recall that every generalised loxodromic element $g \in G$ belongs to a unique maximal virtually cyclic subgroup of $G$, which we denote by $E(g)$ \cite[Lemma~6.5]{MR3589159}. According to \cite[Corollary~6.6]{MR3589159}, $E(g)$ coincides with $\{ h \in G \mid \exists n \geq 1, h g^n h^{-1}= g^{\pm n} \}$. As a consequence, $E(g^k)=E(g)$ for every $k \geq 1$. 

\begin{prop}\label{prop:AcylIMC}
Let $G$ be an acylindrically hyperbolic group. If the finite radical of $G$ is trivial, then $G$ admits an IMC generating set.
\end{prop}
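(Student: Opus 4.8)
The plan is to build the required generating set entirely out of \emph{special} loxodromic elements, i.e.\ generalised loxodromic elements $s$ for which $E(s) = \langle s \rangle$ is infinite cyclic; the triviality of the finite radical is exactly what guarantees that such elements exist in abundance. First I would record the two basic features of a special element $s$. On the one hand, since $C(s) \leq E(s) = \langle s \rangle$ while $\langle s \rangle \leq C(s)$, one has $C(s) = \langle s \rangle$, and the same computation together with $E(s^k)=E(s)$ yields $C(s^k) = \langle s \rangle$ for every $k \neq 0$. Consequently, if $g \neq 1$ satisfies $C(s) \subsetneq C(g)$, then $s \in C(s) \subseteq C(g)$ forces $g \in C(s) = \langle s \rangle$, say $g = s^k$ with $k \neq 0$, whence $C(g) = C(s^k) = \langle s \rangle = C(s)$, contradicting the strict inclusion. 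Thus every special element satisfies the Maximal Centralisers condition on its own. On the other hand, if $s_1, s_2$ are special with $\langle s_1 \rangle \neq \langle s_2 \rangle$ and $[s_1^p, s_2^q] = 1$ for some $p,q \geq 1$, then $s_2^q \in C(s_1^p) = \langle s_1 \rangle$, so $E(s_2) = E(s_2^q) = \langle s_1 \rangle = E(s_1)$, a contradiction; hence distinct special elements with distinct axes satisfy Independence.

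These two observations reduce the whole statement to producing a generating set $S$ of $G$ consisting of special elements whose associated cyclic subgroups $\langle s \rangle$ are pairwise distinct. Let $T$ denote the set of all special loxodromic elements; it is invariant under taking inverses (as $E(s^{-1}) = E(s) = \langle s \rangle = \langle s^{-1}\rangle$), and within each inverse pair $\{s, s^{-1}\}$ the two elements generate the same cyclic subgroup. Granting that $T$ generates $G$, I would then form $S$ by selecting exactly one representative from each pair $\{s, s^{-1}\}$: since $s^{-1} \in \langle s \rangle$, this does not shrink the generated subgroup, so $\langle S \rangle = \langle T \rangle = G$, while two distinct members of $S$ are neither equal nor mutually inverse and are primitive, hence have distinct cyclic subgroups. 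By the previous paragraph, $S$ is then an IMC generating set.

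It therefore remains to prove that $T$ generates $G$, and this is the step I expect to be the main obstacle. The plan is to reduce it to the assertion that for every $g \in G$ there is a special element $t$ with $gt$ again special, for then $g = (gt) \cdot t^{-1}$ exhibits $g$ as a product of two elements of $T$. To establish this assertion I would first invoke the existence of a special loxodromic element $u$ --- this is the precise point at which the triviality of the finite radical is used, since it is what prevents the elementary closure from carrying an obstructing finite normal subgroup --- and then run a north--south dynamics / ping-pong argument for the acylindrical action of $G$ on its hyperbolic space: for $g \notin E(u)$ and $n$ large, the element $gu^n$ is loxodromic, and the delicate point is to choose $n$ (or to vary $u$ within a family of pairwise independent special elements) so that the elementary closure of $gu^n$ is forced to be infinite cyclic rather than merely virtually cyclic. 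Controlling this elementary closure --- ruling out both torsion and a dihedral-type extension in $E(gu^n)$, once more using the trivial finite radical --- is the technical heart of the argument; the remaining cases, where $g$ already lies in $E(u)$, are handled by first replacing $u$ with a special element independent from $g$.
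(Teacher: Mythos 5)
Your argument is essentially the paper's: it also takes $S$ to be a set of representatives, up to inversion, of the special loxodromic elements (those with $E(s)=\langle s\rangle$), and it verifies the Independence and Maximal Centralisers conditions by the same computations with elementary closures that you give. The one step you single out as the main obstacle --- that the special elements generate $G$ when the finite radical is trivial --- is exactly the point the paper does not reprove but quotes from the literature (Proposition~5.14 of \cite{MR3605030}), so rather than carrying out the north--south dynamics/ping-pong construction yourself you can simply invoke that result; with this citation in place your proof is complete and identical in substance to the paper's.
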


\begin{proof}
Fix a non-elementary acylindrical action of $G$ on some hyperbolic space. Following \cite{MR3605030}, we refer to an element $g \in G$ as \emph{special} if $g$ is loxodromic and $E(g)=\langle g \rangle$. Let $S_0$ denote the set of all the special elements of $G$. Fix a set of representatives $S \subset S_0$ with respect to the equivalence relation: for all $r,s \in S_0$, $r \sim s$ whenever $r=s^{\pm 1}$. Let us verify that $S$ is an IMC generating set of $G$.

\medskip \noindent
According to \cite[Proposition~5.14]{MR3605030}, $S_0$ generates $G$, so $S$ is a generating set of $G$. 

\medskip \noindent
Now, let $s \in S$ and $g \in G$ be two elements satisfying $C(s) \subset C(g)$. Since $s$ and $g$ commute, we have
$$g \in C(s) \subset E(s) = \langle s \rangle.$$
Either $g=1$, in which case there is nothing to prove; or $g$ is a non-trivial power of $s$, which implies that 
$$C(s) \subset C(g) \subset E(g) = E(s) = \langle s \rangle \subset C(s),$$ 
hence $C(g)=C(s)$. 

\medskip \noindent
Next, let $r,s \in S$ and $p,q \geq 1$ be such that $[r^p,s^q]=1$. We have
$$r^p \in C(s^q) \subset E(s^q) = E(s) = \langle s \rangle,$$
from which we deduce that
$$\langle r \rangle = E(r)=E(r^p)= E(s) = \langle s \rangle.$$
Therefore, we must have $r= s^{\pm 1}$, hence $r=s$ by definition of $S$. 
\end{proof}

\begin{cor}\label{cor:GPimc}
Let $\Gamma$ be a finite graph and $\mathcal{G}$ a collection of groups indexed by $\Gamma$. If $\Gamma$ contains at least two vertices and is not a join, then $\Gamma \mathcal{G}$ has an IMC generating set.
\end{cor}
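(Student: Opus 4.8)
The plan is to observe that this corollary is an immediate combination of the two preceding propositions, with no additional work required. The hypotheses placed on $\Gamma$ here --- namely that $\Gamma$ has at least two vertices and is not a join --- are precisely the hypotheses of Proposition~\ref{prop:GPacyl}. So the first step is simply to invoke that proposition, which tells us that $\Gamma \mathcal{G}$ is acylindrically hyperbolic and, crucially, that its finite radical is trivial.

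The second step is to feed this conclusion directly into Proposition~\ref{prop:AcylIMC}. That proposition asserts that any acylindrically hyperbolic group with trivial finite radical admits an IMC generating set. Since $\Gamma \mathcal{G}$ satisfies both conditions by the first step, we conclude at once that $\Gamma \mathcal{G}$ has an IMC generating set, as desired. There is no genuine obstacle here: the only mild point to check is that the finite-radical triviality hypothesis of Proposition~\ref{prop:AcylIMC} is not automatic from acylindrical hyperbolicity alone, which is exactly why Proposition~\ref{prop:GPacyl} was stated so as to record the triviality of the finite radical in addition to acylindrical hyperbolicity. Once one notices that the propositions were designed to dovetail in this way, the proof reduces to a single sentence chaining the two references together.
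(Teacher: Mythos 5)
Your proposal is correct and matches the paper's own proof exactly: the paper also deduces the corollary by chaining Proposition~\ref{prop:GPacyl} (acylindrical hyperbolicity with trivial finite radical) into Proposition~\ref{prop:AcylIMC}. Nothing further is needed.
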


\begin{proof}
According to Proposition~\ref{prop:GPacyl}, Proposition~\ref{prop:AcylIMC} applies and yields the desired conclusion.
\end{proof}

\noindent
Let us record the following statement for future use:

\begin{lemma}\label{lem:ProductIMCinGP}
Let $\Gamma$ be a finite graph and $\mathcal{G}$ a collection of groups indexed by $\Gamma$. Let $Q$ be a maximal product subgroup of $\Gamma \mathcal{G}$ that is not contained in a conjugate of vertex-group given by an isolated vertex of $\Gamma$. Then $Q$ decomposes as $Q_1 \times \cdots \times Q_s$ where each $Q_i$ either is conjugate to a vertex-group or admits an IMC generating set. 
\end{lemma}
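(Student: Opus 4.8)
The plan is to use the structure theorem for maximal product subgroups (Lemma~\ref{lem:MaxProdInGP}) to get our hands on $Q$ explicitly, and then feed the resulting factors into the IMC criterion of Corollary~\ref{cor:GPimc}. By Lemma~\ref{lem:MaxProdInGP}, a maximal product subgroup of $\Gamma \mathcal{G}$ is either conjugate to $\langle \Lambda \rangle$ for some maximal join $\Lambda \subset \Gamma$, or else is a maximal product subgroup inside a conjugate of a vertex-group given by an isolated vertex. The second case is excluded by hypothesis, so $Q$ is conjugate to $\langle \Lambda \rangle$ for some maximal join $\Lambda$. Since IMC generating sets and the desired product decomposition are clearly preserved under conjugation (conjugation is an automorphism, and it carries a set satisfying (Independence) and (Maximal Centralisers) to another such set), I may assume $Q = \langle \Lambda \rangle$ itself.

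The key step is then to analyse the join decomposition of $\Lambda$. Write $\Lambda$ as a join of its maximal irreducible (i.e.\ non-join) factors,
$$\Lambda = \Theta_1 \ast \cdots \ast \Theta_s,$$
where each $\Theta_i$ is either a single vertex or an irreducible subgraph with at least two vertices. This induces the product decomposition
$$Q = \langle \Lambda \rangle = \langle \Theta_1 \rangle \times \cdots \times \langle \Theta_s \rangle,$$
and I set $Q_i := \langle \Theta_i \rangle$, which is the parabolic subgroup $\langle \Theta_i \rangle$ that is itself a graph product over $\Theta_i$. Now I examine each $Q_i$ separately. If $\Theta_i$ is a single vertex, then $Q_i$ is a vertex-group, so it satisfies the first alternative of the conclusion. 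If $\Theta_i$ has at least two vertices, then it is irreducible by construction, i.e.\ not a join, so Corollary~\ref{cor:GPimc} applies directly to the graph product $\langle \Theta_i \rangle = \Theta_i \mathcal{G}|_{\Theta_i}$ and yields an IMC generating set. This gives exactly the claimed dichotomy for each factor.

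The one point that requires a little care, and which I expect to be the main (if modest) obstacle, is making sure the irreducible factors of the join decomposition are genuinely the ones on which Corollary~\ref{cor:GPimc} can be run. The subtlety is that a factor $\Theta_i$ with two or more vertices could \emph{a priori} still admit a join decomposition; the remedy is simply to take the decomposition of $\Lambda$ into \emph{irreducible} factors (each either a single vertex or a join-indecomposable graph on $\geq 2$ vertices), which exists and is essentially unique for finite graphs. With that choice, ``at least two vertices and not a join'' holds for each non-singleton $\Theta_i$ by definition, so the hypotheses of Corollary~\ref{cor:GPimc} are met on the nose. A secondary bookkeeping point is the conjugation reduction at the start: since the statement only asks that $Q$ \emph{decompose} as such a product, and conjugation by an element of $\Gamma \mathcal{G}$ carries vertex-groups to conjugates of vertex-groups and IMC generating sets to IMC generating sets, replacing $Q$ by $\langle \Lambda \rangle$ loses nothing.
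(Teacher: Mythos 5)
Your proof is correct and follows essentially the same route as the paper: invoke Lemma~\ref{lem:MaxProdInGP} to write $Q$ as a conjugate of $\langle \Lambda \rangle$ for a maximal join $\Lambda$, decompose $\Lambda$ into join-irreducible factors, and apply Corollary~\ref{cor:GPimc} to each non-singleton factor. The only (immaterial) difference is that the paper carries the conjugating element through the decomposition rather than reducing to $Q = \langle \Lambda \rangle$ at the outset.
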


\begin{proof}
According to Lemma~\ref{lem:MaxProdInGP}, our maximal product subgroup $Q \leq \Gamma \mathcal{G}$ can written as $g \langle \Xi \rangle g^{-1}$ for some $g \in  \Gamma \mathcal{G}$ and some maximal join $\Xi \subset \Gamma$. Decompose $\Xi$ as $\Xi_1 \ast \cdots \ast \Xi_s$ where no $\Xi_i$ is a join. Accordingly, $Q$ decomposes as a product $Q_1 \times \cdots \times Q_s$ where $Q_i:= g \langle \Xi_i \rangle g^{-1}$ for every $1 \leq i \leq s$. For every $1 \leq i \leq s$, either $\Xi_i$ is reduced to a single vertex, in which case $Q_i$ is conjugate to a vertex-group; or $\Xi_i$ contains at least two vertices, in which case $Q_i$ admits an IMC generating set according to Corollary~\ref{cor:GPimc}.
\end{proof}

\subsection{Some rigidity}

\noindent
Our main motivation for the introduction of IMC generating sets is the following statement, which is inspired by \cite[Lemma~3.4]{MR4808711} and which will be fundamental in order to deduce Theorem~\ref{thm:NeverRAAG} from Theorem~\ref{thm:NotVirtRAAG}. 

\begin{lemma}\label{lem:IMCandProduct}
Let $H,K,A_1, \ldots, A_n$ be groups such that $H \times K$ is a finite-index subgroup of $A_1 \times \cdots \times A_n$. If $A_1, \ldots, A_n$ have almost stable centralisers and if $H$ is a non-cyclic group admitting an IMC generating set, then 
$$ H \leq \mathrm{VZ}(A_1) \times \cdots \times \mathrm{VZ}(A_{i-1}) \times A_i \times \mathrm{VZ}(A_{i+1}) \times \cdots \times \mathrm{VZ}(A_n)$$
for some index $1 \leq i \leq n$. 
\end{lemma}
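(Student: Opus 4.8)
The plan is to fix an IMC generating set $S$ of $H$ and to prove that there is a single index $i$ such that $\pi_j(s)\in\mathrm{VZ}(A_j)$ for every $s\in S$ and every $j\ne i$, where $P:=A_1\times\cdots\times A_n$ and $\pi_j\colon P\to A_j$ is the $j$-th projection. Since $\prod_{j\ne i}\mathrm{VZ}(A_j)\times A_i$ is a subgroup of $P$ containing all of $S$, this gives the stated inclusion at once. Throughout I would use the finite-index hypothesis in the form: for each $j$ the subgroups $\pi_j(H)$ and $\pi_j(K)$ commute and $\pi_j(H)\pi_j(K)$ has finite index in $A_j$; consequently a $b\in\pi_j(H)$ lies in $\mathrm{VZ}(A_j)$ if and only if it virtually centralises $\pi_j(H)$. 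For $s\in S$ set $I(s):=\{\,j:\pi_j(s)\notin\mathrm{VZ}(A_j)\,\}$.

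First I would record two reductions. As $H$ is non-cyclic, $S$ has at least two elements. If some $s\in S$ belonged to $\mathrm{VZ}(H)$, then $s$ would centralise a finite-index (normal, after passing to the core) subgroup of $H$, hence a suitable power $t^q$ of any other generator $t\in S$; this gives $[s,t^q]=1$, contradicting the Independence condition. So no generator is virtually central, and since the virtual centre of a direct product is the product of the virtual centres of the factors, this yields $I(s)\ne\emptyset$ for every $s\in S$.

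The engine of the argument is the following pigeonhole observation, which needs neither the Maximal Centralisers condition nor almost stable centralisers: for distinct $s,t\in S$ there is a single factor $i=i(s,t)$ with $[\pi_i(s)^p,\pi_i(t)^q]\ne1$ for \emph{all} $p,q\ge1$. Indeed, were there in every factor $A_j$ some pair of powers $\pi_j(s)^{p_j},\pi_j(t)^{q_j}$ that commute, then with $p=\mathrm{lcm}_j(p_j)$ and $q=\mathrm{lcm}_j(q_j)$ the elements $\pi_j(s)^p$ and $\pi_j(t)^q$ would commute in every coordinate, giving $[s^p,t^q]=1$ and contradicting Independence. This factor satisfies $i(s,t)\in I(s)\cap I(t)$, since a virtually central $\pi_i(s)$ would commute with a power of $\pi_i(t)$. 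Granting the next step, this finishes the proof: if each $I(s)$ is a singleton, then $I(s)=\{i(s,t)\}=I(t)$ for every pair, so all supports coincide with one common index $i$, and $H=\langle S\rangle\le\prod_{j\ne i}\mathrm{VZ}(A_j)\times A_i$.

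The crux, and the main obstacle, is to show that $I(s)$ is a singleton for every $s\in S$; this is where the Maximal Centralisers condition and almost stable centralisers are used. Suppose $j_1\ne j_2\in I(s)$. The idea is to produce a non-trivial $g\in H$ with $\emptyset\ne\mathrm{supp}(g)\subsetneq I(s)$ lying in the centre of $C_H(s)$; such a $g$ satisfies $C_H(s)\subseteq C_H(g)$, and the inclusion is \emph{strict} because, choosing $j_0\in I(s)\setminus\mathrm{supp}(g)$, the projection $\pi_{j_0}(g)$ is virtually central while $\pi_{j_0}(s)$ is not, so the finite-index hypothesis supplies an $h\in H$ commuting with $g$ but not with $s$ in factor $j_0$ — contradicting Maximal Centralisers and forcing $g=1$. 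To build $g$ \emph{inside} $H$ (the coordinate-truncations of $s$ work in $P$ but need not lie in $H$) I would exploit that $(H\times K)\cap A_{j_1}$ has finite index in the copy of $A_{j_1}$ in $P$: the $H$-component of such an element has support contained in $\{j_1\}$, since in every other factor $A_k$ its projection is forced into $\pi_k(H)\cap\pi_k(K)$, which is central in the finite-index subgroup $\pi_k(H)\pi_k(K)$ and hence lies in $\mathrm{VZ}(A_k)$. The delicate points I expect to fight are arranging that this $H$-component is genuinely non-central in factor $j_1$ (so $\mathrm{supp}(g)$ is non-empty) and that it — or a suitable power, using almost stable centralisers to keep its centraliser unchanged — centralises $C_H(s)$; the finite-index hypothesis is precisely what prevents $H$ from being diagonally entangled across the factors and makes these choices possible.
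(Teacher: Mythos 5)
Your reduction is sound up to the crux: the observations that $I(s)\neq\emptyset$, that any two distinct generators $s,t$ admit a factor $i(s,t)\in I(s)\cap I(t)$ in which no positive powers of $\pi_i(s)$ and $\pi_i(t)$ commute (the lcm argument), and that singleton supports would force a single common index, are all correct and would indeed finish the proof. But the claim that $I(s)$ is a singleton --- which you rightly identify as the heart of the matter --- is not proved, and the construction you sketch for it does not work. An element of $(H\times K)\cap A_{j_1}$ is chosen with no reference to $s$ whatsoever, so there is no reason its $H$-component should centralise $C_H(s)$, nor that any power of it should; without that, you never obtain an inclusion $C_H(s)\subseteq C_H(g)$ and the Maximal Centralisers condition never enters. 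The two ``delicate points'' you flag are not technicalities to be fought through within this construction: they are the actual content of the lemma, and this particular $g$ is the wrong candidate.

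The missing idea is to build $g$ out of $s$ itself. Write $s=(s_1,\ldots,s_n)$ and choose $k\geq1$ so large that each coordinate truncation $s_j^k$ (viewed in $A_1\times\cdots\times A_n$ with trivial entries elsewhere) lies in the finite-index subgroup $H\times K$. Independence forces some $s_i^k\notin Z(A_i)$, since otherwise $s^k$ would be central and would commute with a second generator. Now take $g:=s_1^k\cdots s_{i-1}^k s_{i+1}^k\cdots s_n^k=s^k(s_i^k)^{-1}$. Its centraliser in the ambient product is $C_{A_1}(s_1^k)\times\cdots\times A_i\times\cdots\times C_{A_n}(s_n^k)$, which contains $C(s^k)\supseteq C(s)$ and is strictly larger because the constraint in the $i$-th coordinate has been dropped; Maximal Centralisers then gives $g=1$, i.e.\ $s_j^k=1$ for all $j\neq i$, and almost stable centralisers upgrade this to $s_j\in\mathrm{VZ}(A_j)$, since $C_{A_j}(s_j)$ has finite index in $C_{A_j}(s_j^k)=C_{A_j}(1)=A_j$. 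Note that this is exactly where the almost-stable-centralisers hypothesis is consumed; your proposal never actually invokes it. Your parenthetical worry that the truncations need not lie in $H$ is a legitimate point about making the Maximal Centralisers condition (stated for $g\in H$) applicable --- one should either run the centraliser comparison in $H\times K$ and pass to the $H$-component of $g$, using that $K$ commutes with $H$ --- but the remedy is to patch the truncation argument, not to replace $g$ by an unrelated element of $(H\times K)\cap A_{j_1}$.
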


\begin{proof}
Fix an IMC generating set $S \subset H$. Notice that, since $H$ is not cyclic, $S$ contains at least two elements. 

\medskip \noindent
Fix an $s \in S$. In $A:= A_1 \times \cdots \times A_n$, we can write $s=(s_1, \ldots, s_n)$. Let $k \geq 1$ be a sufficiently large integer so that $s_i^k \in H \cap K$ for every $1 \leq i \leq n$. If $s_i^k \in Z(A_i)$ for every $1 \leq i \leq n$, then $s^k$ belongs to the centre of $A$, and a fortiori of $H$. Then, $[r,s^k]=1$ for every $r \in S \backslash \{s\}$, contradicting the fact that $S$ is IMC. Thus, there exists some $1 \leq i \leq n$ such that $s_i^k \notin Z(A_i)$. It follows that
$$\begin{array}{lcl} C_{H \times K} (s) & \subset & C_{H \times K} (s^k) = (H \times K) \cap C_A(s^k) \\ \\ & = & (H \times K) \cap \left( C_{A_1}(s_1^k) \times \cdots \times C_{A_n} (s_n^k) \right) \\ \\ & \subsetneq & (H \times K) \cap \left( C_{A_1}(s_1^k) \times \cdots \times C_{A_{i-1}}(s_{i-1}^k) \times A_i \times C_{A_{i+1}}(s_{i+1}^k) \times \cdots \times C_{A_n}(s_n^k) \right) \\  \\ & \subsetneq & C_{H \times K} (s_1^k \cdots s_{i-1}^k s_{i+1}^k \cdots s_n^k) \end{array}$$
Since $S$ is IMC, it follows that $s_1^k \cdots s_{i-1}^k s_{i+1}^k \cdots s_n^k=1$, which amounts to saying that $s_j^k=1$ for every $j \neq i$, or equivalently that $s^k \in A_i$. 

\medskip \noindent
Notice that, for every $j \neq i$, $C_{A_j}(s_j)$ has finite index in $C_{A_j}(s_j^k) = C_{A_j}(1)= A_j$ since $A_j$ has almost stable centralisers, which amounts to saying that $s_j$ belongs to $\mathrm{VZ}(A_j)$. 

\medskip \noindent
So far, we have proved that $s$ belongs to 
$$\mathrm{VZ}(A_1) \times \cdots \times \mathrm{VZ}(A_{i-1}) \times A_i \times \mathrm{VZ}(A_{i+1}) \times \cdots \times \mathrm{VZ}(A_n)$$
and that $s^k \in A_i$. Given an $r \in S \backslash \{s\}$, we know similarly that there exist $\ell \geq 1$ and $1 \leq j \leq n$ such that $r$ belongs to 
$$\mathrm{VZ}(A_1) \times \cdots \times \mathrm{VZ}(A_{j-1}) \times A_i \times \mathrm{VZ}(A_{j+1}) \times \cdots \times \mathrm{VZ}(A_n)$$
and such that $r^\ell \in A_j$. If $i \neq j$, then clearly $[r^\ell, s^k]=1$, which is impossible as $S$ is IMC. Hence $i=j$. We conclude that $S$, and a fortiori $H$, is contained in 
$$\mathrm{VZ}(A_1) \times \cdots \times \mathrm{VZ}(A_{i-1}) \times A_i \times \mathrm{VZ}(A_{i+1}) \times \cdots \times \mathrm{VZ}(A_n),$$
as desired.
\end{proof}

\noindent
As a first consequence of Lemma~\ref{lem:IMCandProduct}, let us mention that:

\begin{cor}\label{cor:UniqueFactor}
Let $\Phi_1, \Phi_2$ be two finite graphs that contain at least two vertices and are not joins. If $A(\Psi_1) \times A(\Psi_2) = H_1 \times H_2$ for some non-trivial subgroups $H_1,H_2 \leq A(\Phi_1) \times A(\Phi_2)$, then $H_1= A(\Phi_1)$ and $H_2= A(\Phi_2)$ up to switching $H_1$ and $H_2$. 
\end{cor}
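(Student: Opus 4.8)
The plan is to deduce the corollary from Lemma~\ref{lem:IMCandProduct}, applied twice but read ``backwards'': I regard the ambient product $A:=A(\Phi_1)\times A(\Phi_2)=H_1\times H_2$ as the product ``$A_1\times\cdots\times A_n$'' of the lemma, with $n=2$, $A_1=H_1$ and $A_2=H_2$, and I take each factor $A(\Phi_i)$ to play the role of the distinguished subgroup ``$H$''. First I would verify the hypotheses. Since each $\Phi_i$ has at least two vertices and is not a join, it is not complete, so $A(\Phi_i)$ contains a non-abelian free subgroup and is in particular non-cyclic; moreover $A(\Phi_i)$ admits an IMC generating set by Corollary~\ref{cor:GPimc}. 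The ambient group $A$ is itself a right-angled Artin group, namely $A(\Phi_1\ast\Phi_2)$, so all its subgroups, and in particular $H_1$ and $H_2$, have stable (hence almost stable) centralisers by Corollary~\ref{cor:StableCent}.

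The crucial preliminary step is to pin down the virtual centres. Applying Lemma~\ref{lem:VirtualCentreGP} to $A(\Phi_1\ast\Phi_2)$, the join decomposition of $\Phi_1\ast\Phi_2$ into irreducible factors is exactly $\Phi_1\ast\Phi_2$ (as each $\Phi_i$ is join-irreducible with at least two vertices), and since all vertex-groups are infinite cyclic there is neither an isolated-vertex factor nor a factor given by a non-adjacent pair of $\mathbb{Z}_2$'s. Hence $\mathrm{VZ}(A)$ is trivial. I would then transport this triviality to the factors $H_i$: if $z\in\mathrm{VZ}(H_2)$ centralises a finite-index subgroup $L\leq H_2$, then, viewed in $A=H_1\times H_2$, the element $z$ centralises the finite-index subgroup $H_1\times L$, so $z\in\mathrm{VZ}(A)=\{1\}$; the same argument applies to $H_1$. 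Thus $\mathrm{VZ}(H_1)=\mathrm{VZ}(H_2)=\{1\}$.

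With the virtual centres trivial, the conclusion of Lemma~\ref{lem:IMCandProduct} collapses to a clean containment: applying it to $A(\Phi_1)$ gives $A(\Phi_1)\leq H_1$ or $A(\Phi_1)\leq H_2$, and likewise for $A(\Phi_2)$. After relabelling $H_1,H_2$ if necessary I may assume $A(\Phi_1)\leq H_1$. Applying the lemma to $A(\Phi_2)$ yields $A(\Phi_2)\leq H_1$ or $A(\Phi_2)\leq H_2$; the first possibility would force $A=\langle A(\Phi_1),A(\Phi_2)\rangle\leq H_1$ and hence $H_2=\{1\}$, contradicting the non-triviality of $H_2$, so necessarily $A(\Phi_2)\leq H_2$. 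Finally, from $A(\Phi_i)\leq H_i$ together with the equality of internal direct products $A(\Phi_1)\times A(\Phi_2)=H_1\times H_2$, the uniqueness of the decomposition of each element forces $H_i=A(\Phi_i)$: given $h\in H_1$, writing $h=a_1a_2$ with $a_i\in A(\Phi_i)\leq H_i$ shows $a_2\in H_1\cap H_2=\{1\}$, so $h\in A(\Phi_1)$, and symmetrically for $H_2$.

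I expect the main obstacle to be precisely the second paragraph: the conclusion of Lemma~\ref{lem:IMCandProduct} involves the virtual centres of the \emph{ambient} factors $H_1,H_2$, not of the nicely-behaved groups $A(\Phi_i)$, so the argument genuinely needs the triviality of $\mathrm{VZ}(A)$ and its inheritance by direct factors. Once that is in hand, the remaining steps are bookkeeping about direct-product decompositions.
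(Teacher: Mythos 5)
Your proof is correct and follows essentially the same route as the paper: both apply Lemma~\ref{lem:IMCandProduct} to the inclusion $A(\Phi_1)\times A(\Phi_2)\hookrightarrow H_1\times H_2$, then rule out both factors landing in the same $H_i$ using non-triviality, and finish with the direct-product equality. Your second paragraph --- checking via Lemma~\ref{lem:VirtualCentreGP} that $\mathrm{VZ}(A(\Phi_1)\times A(\Phi_2))$ is trivial and that this triviality passes to the direct factors $H_1,H_2$, so the $\mathrm{VZ}$ terms in the conclusion of Lemma~\ref{lem:IMCandProduct} disappear --- makes explicit a step the paper's proof silently skips, and is a worthwhile addition rather than a deviation.
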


\begin{proof}
We know from Corollary~\ref{cor:GPimc} that $A(\Psi_1)$ and $A(\Psi_2)$ are not cyclic and admit an IMC generating set. Moreover, as a consequence of Corollary~\ref{cor:StableCent}, $H_1$ and $H_2$ have (almost) stable centralisers. Therefore, Lemma~\ref{lem:IMCandProduct} applies to the inclusion map $A(\Psi_1) \times A(\Psi_2) \hookrightarrow H_1 \times H_2$, proving that $A(\Psi_1)$ and $A(\Psi_2)$ are contained in $H_1$ or $H_2$. Since $H_1$ and $H_2$ are non-trivial, clearly $A(\Psi_1)$ and $A(\Psi_2)$ cannot be both contained in either $H_1$ or $H_2$. Up to switching $H_1$ and $H_2$, say that $A(\Psi_1) \leq H_1$ and $A(\Psi_2) \leq H_2$. From the equality $A(\Psi_1) \times A(\Psi_2) = H_1 \times H_2$, we conclude that $H_1= A(\Phi_1)$ and $H_2= A(\Phi_2)$.
\end{proof}

\section{Flat braids on seven strands}

\noindent
This section is dedicated to the proof of the following statement, which will be the key in order to deduce that most pure flat braid groups are not virtually right-angled Artin groups.

\begin{thm}\label{thm:NotVirtRAAG}
The group $\mathrm{PFB}_7$ is not virtually a right-angled Artin group.
\end{thm}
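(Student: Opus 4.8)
The plan is to follow the strategy outlined in the introduction: show that $\mathrm{PFB}_7$ has an algebraic invariant that distinguishes it from every virtually right-angled Artin group, and that this invariant is preserved under commensurability. The key structural observation is that, via the identification $\mathrm{FB}_7 = C(P_5^{\mathrm{opp}})$, the graph $P_5^{\mathrm{opp}}$ is not a join and has at least two vertices, so by Proposition~\ref{prop:GPacyl} the group $\mathrm{FB}_7$ is acylindrically hyperbolic with trivial finite radical, and by Corollary~\ref{cor:FBVZ} both $\mathrm{FB}_7$ and $\mathrm{PFB}_7$ have trivial virtual centres.

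First I would introduce the \emph{thick subgroup} $\mathrm{Thick}(G)$, generated by the centralisers of those elements whose centralisers are not virtually abelian, as flagged in the introduction. The two facts to establish are: (i) for a right-angled Artin group $A(T)$ defined by a finite tree $T$, one has $\mathrm{Thick}(A(T)) = A(T)$ (this is Lemma~\ref{lem:ModThickRAAG}); and (ii) $\mathrm{Thick}(\mathrm{PFB}_7)$ has infinite index in $\mathrm{PFB}_7$ (Lemma~\ref{lem:ModThickFB}). Next I would verify that a finite-index right-angled Artin subgroup of $\mathrm{PFB}_7$ must be of the form $A(T)$ for a \emph{forest} $T$: by Lemma~\ref{lem:NoSubgroup} the group $\mathrm{FB}_7$ contains no $\mathbb{Z}^3$, no $\mathbb{F}_2 \times \mathbb{F}_2$, and no closed-surface-group of genus $\geq 2$, and any right-angled Artin group whose defining graph contains a cycle or a triangle would produce one of these (Corollary~\ref{cor:SubRAAG}), so only forests survive; and since the group is one-ended enough to force connectivity, the relevant case is $A(T)$ for a tree $T$.

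Then I would combine these pieces. Suppose for contradiction that $\mathrm{PFB}_7$ is virtually a right-angled Artin group; then it contains a finite-index subgroup isomorphic to some $A(T)$ with $T$ a finite tree. The crux is that the quotient $G / \mathrm{Thick}(G)$, or more precisely the property ``$\mathrm{Thick}(G)$ has finite index in $G$'', should be a commensurability invariant: passing to a finite-index subgroup $G_0 \leq G$, centralisers in $G_0$ are commensurable with centralisers in $G$, so an element has non-virtually-abelian centraliser in $G_0$ iff it does in $G$, whence $\mathrm{Thick}(G_0)$ is commensurable with $G_0 \cap \mathrm{Thick}(G)$. Applying this to $A(T) \leq \mathrm{PFB}_7$ with Lemma~\ref{lem:ModThickRAAG} and Lemma~\ref{lem:ModThickFB} yields a contradiction, since $\mathrm{Thick}(A(T)) = A(T)$ forces $\mathrm{Thick}(\mathrm{PFB}_7)$ to have finite index, against Lemma~\ref{lem:ModThickFB}.

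The main obstacle I anticipate is the commensurability-invariance step: one must show carefully that the thickness dichotomy (finite versus infinite index of $\mathrm{Thick}(G)$) is stable under passing between commensurable groups, which requires controlling how centralisers and their virtual-abelianness behave under intersection with finite-index subgroups. The identity $\mathrm{Thick}(A(T)) = A(T)$ also needs genuine input: for a tree $T$ one exhibits, for each generator $v$, an adjacent generator $w$ so that the centraliser of a suitable loxodromic element supported on $\{v,w\}$ (or a product thereof) fails to be virtually abelian — using the structure of centralisers in graph products from Proposition~\ref{prop:CentralisersGP} — and checks that these centralisers collectively generate all of $A(T)$. Establishing Lemma~\ref{lem:ModThickFB}, that thickness is ``small'' in $\mathrm{PFB}_7$, is the delicate computational heart and will rest on an explicit analysis of which elements of $\mathrm{PFB}_7 \leq C(P_5^{\mathrm{opp}})$ have non-virtually-abelian centralisers.
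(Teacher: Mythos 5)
Your proposal is correct and follows essentially the same route as the paper: restrict to finite-index right-angled Artin subgroups $A(T)$ with $T$ a tree via Lemma~\ref{lem:NoSubgroup} and Corollary~\ref{cor:SubRAAG}, then play $\mathrm{Thick}(A(T))=A(T)$ (Lemma~\ref{lem:ModThickRAAG}) against the infinite index of $\mathrm{Thick}(\mathrm{PFB}_7)$ (Lemma~\ref{lem:ModThickFB}). The ``commensurability-invariance'' step you flag as the main anticipated obstacle is in fact immediate: only the one-way inclusion $\mathrm{Thick}(A(T))\leq\mathrm{Thick}(\mathrm{PFB}_7)$ is needed, and it holds because a centraliser only grows when passing from a subgroup to the ambient group, giving $|\mathrm{PFB}_7/\mathrm{Thick}(\mathrm{PFB}_7)|\leq|\mathrm{PFB}_7/A(T)|<\infty$, contradicting Lemma~\ref{lem:ModThickFB}.
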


\noindent
We denote by $\sigma_1, \ldots, \sigma_6$ the usual generators of $\mathrm{FB}_7$, i.e.\ each $\sigma_i$ is an elementary twist of the $i$th and $(i+1)$st strands. Equivalently, when thinking of $\mathrm{FB}_7$ as the right-angled Coxeter group $C(P_5^\mathrm{opp})$, $\sigma_1, \ldots, \sigma_6$ correspond to the generators given by the successive vertices along the path $P_5$. 

\medskip \noindent
From the presentation
$$\langle \sigma_1, \ldots, \sigma_6 \mid \sigma_i^2=1 \ (1 \leq i \leq 6), \ [\sigma_i,\sigma_j]=1 \ (|i-j| \geq 2) \rangle$$
of $\mathrm{FB}_7$, one easily verifies that $\mathrm{FB}_7$ decomposes as the following graph of groups:

\begin{center}
\includegraphics[width=0.8\linewidth]{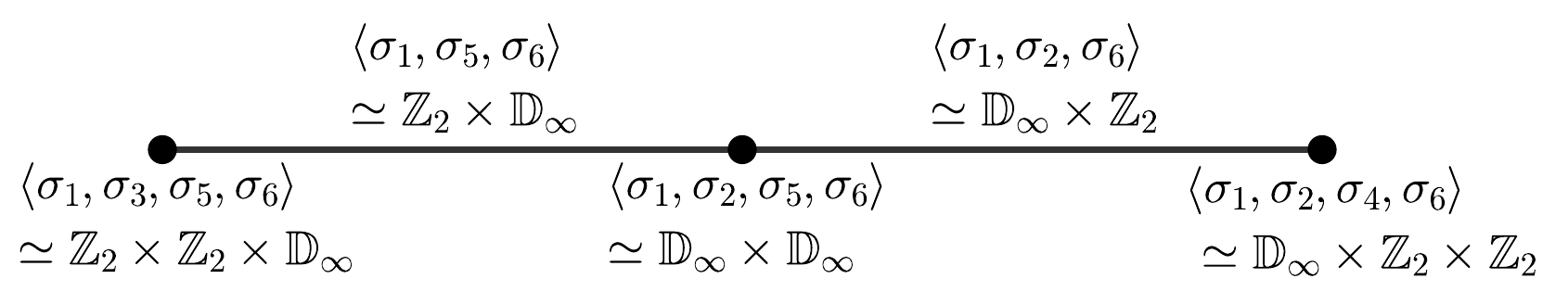}
\end{center}

\noindent
We refer to this decomposition of $\mathrm{FB}_7$ as its \emph{tubular decomposition}. The key point is that vertex-groups are virtually $\mathbb{Z}$ or $\mathbb{Z}^2$ and that edge-groups are virtually $\mathbb{Z}$. 

\medskip \noindent
We use this tubular decomposition in order to find restrictions on the possible subgroups of $\mathrm{FB}_7$. This will allow us to show that the only right-angled Artin groups that are subgroups of $\mathrm{PFB}_7$ are of the form $A(\Gamma)$ where $\Gamma$ is a forest (see Corollary~\ref{cor:SubRAAG}). 

\begin{lemma}\label{lem:NoSubgroup}
The group $\mathrm{FB}_7$ does not contain a subgroup isomorphic to $\mathbb{Z}^3$, to $\mathbb{F}_2 \times \mathbb{F}_2$, or to the fundamental group of a closed surface of genus $\geq 2$. 
\end{lemma}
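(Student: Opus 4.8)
The plan is to exploit the tubular decomposition directly by letting each of the three candidate subgroups act on the associated Bass--Serre tree $T$. Writing $G = \mathrm{FB}_7$, this decomposition exhibits $G$ as acting on $T$ with vertex stabilisers equal to the conjugates of the vertex-groups (each virtually $\mathbb{Z}$ or $\mathbb{Z}^2$) and edge stabilisers equal to the conjugates of the edge-groups (each virtually $\mathbb{Z}$). For a subgroup $H \leq G$ I would restrict this action to $H$ and show that each of $\mathbb{Z}^3$, $\mathbb{F}_2 \times \mathbb{F}_2$, and a genus-$\geq 2$ surface group is incompatible with stabilisers this small. The basic tool throughout is the elliptic/loxodromic dichotomy together with Serre's lemma: a finitely generated group all of whose elements act elliptically on a tree fixes a point.

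For $H \cong \mathbb{Z}^3$: if $H$ fixes a point of $T$ then $H$ embeds in a vertex-group, hence is virtually abelian of torsion-free rank $\leq 2$, contradicting $\mathrm{rank}(\mathbb{Z}^3) = 3$. Otherwise $H$ contains a loxodromic element $g$, and since $H$ is abelian it preserves the axis $L$ of $g$. The subgroup fixing $L$ pointwise lies in an edge-group, so is virtually $\mathbb{Z}$, while $H$ modulo this subgroup embeds in $\mathrm{Isom}(L) = \mathbb{D}_\infty$ and is therefore virtually cyclic. Thus $H$ is an extension of a virtually cyclic group by a virtually cyclic group, hence of torsion-free rank $\leq 2$ --- again a contradiction.

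For $H = A \times B$ with $A \cong B \cong \mathbb{F}_2$: since $\mathbb{F}_2$ is not virtually abelian, $A$ cannot fix a point of $T$, so by Serre's lemma $A$ contains a loxodromic element $g$. As $B$ centralises $A$, every element of $B$ commutes with $g$ and therefore preserves the axis $L$ of $g$. Exactly as in the abelian case, the resulting action of $B$ on $L$ has kernel contained in an edge-group (virtually $\mathbb{Z}$) and image in $\mathbb{D}_\infty$, so $B$ is virtually solvable. This contradicts $B \cong \mathbb{F}_2$, ruling out $\mathbb{F}_2 \times \mathbb{F}_2$.

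The main obstacle is the surface group $S = \pi_1(\Sigma_h)$, $h \geq 2$, because $S$ is hyperbolic and hence contains no $\mathbb{Z}^2$, so the commuting arguments above are unavailable. Here I would argue as follows: $S$ is not virtually abelian, so it fixes no point of $T$, and the restricted action yields a nontrivial minimal splitting of $S$ as a graph of groups whose edge-groups are virtually cyclic and whose vertex-groups are virtually abelian (being subgroups of virtually-$\mathbb{Z}^2$ groups). But a closed surface group of genus $\geq 2$ is a $\mathrm{PD}_2$ group, and any nontrivial splitting of it over a virtually cyclic subgroup is geometric: it is dual to an essential system of simple closed curves, so the vertex-groups are fundamental groups of compact subsurfaces with nonempty boundary, that is, non-abelian free groups of rank $\geq 2$. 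Such a group cannot embed in a virtually-$\mathbb{Z}^2$ vertex-group, the desired contradiction. The delicate point, which I would justify by invoking the theory of splittings of surface groups (equivalently, of $\mathrm{PD}_2$ pairs), is precisely that no vertex-group of such a splitting can be virtually abelian.
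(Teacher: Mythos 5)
Your treatment of $\mathbb{Z}^3$ and $\mathbb{F}_2\times\mathbb{F}_2$ is essentially the paper's argument: both rely on the Bass--Serre tree of the tubular decomposition and on the fact that the stabiliser of the axis of a loxodromic element is (virtually cyclic)-by-(virtually cyclic), hence contains no $\mathbb{Z}^3$ and no non-abelian free subgroup. Where you genuinely diverge is the surface-group case. The paper sidesteps $\mathrm{PD}_2$ theory entirely by proving the stronger claim that no one-ended hyperbolic group embeds in $\mathrm{FB}_7$: in a minimal invariant subtree the edge groups are infinite (one-endedness) and the vertex groups are virtually cyclic (a hyperbolic group has no $\mathbb{Z}^2$), so all vertex groups are commensurable and share a single elementary closure $E$, which is then an infinite normal virtually cyclic subgroup, forcing the group to be two-ended --- a contradiction. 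Your route instead invokes the theorem that every cyclic splitting of a closed surface group is dual to a system of essential simple closed curves. That theorem is true and your argument goes through, but it is a heavier external input, and two small points need patching: first, since the surface group is torsion-free the induced edge groups are trivial or $\mathbb{Z}$, and the trivial case must be ruled out by one-endedness before geometricity applies; second, a complementary piece of the curve system may be an annulus, so not every vertex group is free of rank $\geq 2$ as you assert --- but since $\chi(\Sigma_h)<0$ at least one piece has negative Euler characteristic, which supplies the non-abelian free vertex group you need. In short, the paper's version buys extra generality (all one-ended hyperbolic subgroups are excluded) from standard facts about elementary closures, while yours stays within surface-group theory, which is all that is required for the application in Corollary~\ref{cor:SubRAAG}.
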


\begin{proof}
As a consequence of its tubular decomposition, $\mathrm{FB}_7$ acts on a tree $T$ with virtually $\mathbb{Z}$ or $\mathbb{Z}^2$ vertex-stabilisers and with virtually $\mathbb{Z}$ edge-stabilisers. If $g \in \mathrm{FB}_7$ induces a loxodromic isometry on $T$, then its centraliser $C(g)$ in $\mathrm{FB}_7$, which must stabilise the axis of $\gamma$ of $g$ in $T$, is necessarily (virtually cyclic)-by-$\mathbb{Z}$. (Indeed, the action of $C(g)$ on $\gamma$ by translation induces an epimorphism to $\mathbb{Z}$ whose kernel fixes $\gamma$ pointwise, and consequently must be virtually cyclic since edge-groups are virtually $\mathbb{Z}$.) 

\medskip \noindent
Since every element of $\mathbb{Z}^3$ has centraliser $\mathbb{Z}^3$, it follows that, if $\mathbb{Z}^3$ is a subgroup of $\mathrm{FB}_7$, then it cannot contain a loxodromic isometry. Thus, it must be elliptic in $T$, which is impossible since vertex-stabilisers are virtually $\mathbb{Z}$ or $\mathbb{Z}^2$. Therefore, $\mathbb{Z}^3$ cannot be a subgroup of $\mathrm{FB}_7$.

\medskip \noindent
If $\mathbb{F}_2 \times \mathbb{F}_2$ is a subgroup of $\mathrm{FB}_7$, then an $\mathbb{F}_2$-factor cannot be elliptic in $T$, since vertex-stabilisers are virtually $\mathbb{Z}$ or $\mathbb{Z}^2$, so it must contain a loxodromic isometry. But the centraliser of an element of an $\mathbb{F}_2$-factor always contains a non-abelian subgroup, contracting the previous observation. Therefore, $\mathbb{F}_2 \times \mathbb{F}_2$ cannot be a subgroup of $\mathrm{FB}_7$.

\medskip \noindent
Finally, it remains to verify that $\mathrm{FB}_7$ does not contain a subgroup isomorphic to the fundamental group of a closed surface of genus $\geq 2$. We will prove more generally that:

\begin{claim}
Let $G$ be a one-ended hyperbolic group. Then $G$ is not isomorphic to a subgroup of $\mathrm{FB}_7$.
\end{claim}

\noindent
What we need to know about hyperbolic groups is that, for every infinite-order element $g \in G$, there exists a unique maximal virtually cyclic subgroup $E(g)$ containing $\langle g \rangle$, which we will refer to as the \emph{elementary closure}. (Geometrically, given a quasi-axis $\gamma$ of $g$ in $G$, $E(g)$ corresponds to the subgroup given by the elements $h \in G$ such that the Hausdorff distance between $\gamma$ and $h \gamma$ is finite. Or equivalently, to the stabiliser of the pair of points at infinity of $\gamma$.) Assume for contradiction that $G$ is isomorphic to a subgroup of $\mathrm{FB}_7$. Fix a minimal $G$-invariant subtree in the Bass-Serre tree associated to the tubular decomposition of $\mathrm{FB}_7$. Because $G$ is one-ended, edge-groups must be virtually $\mathbb{Z}$; and, because $G$ does not contain $\mathbb{Z}^2$, vertex-subgroups must be virtually $\mathbb{Z}$ as well. Thus, vertex-groups are pairwise commensurable in $G$. This implies that they all have the same elementary closure $E$. Then, $E$ yields a normal virtually $\mathbb{Z}$ subgroup of $G$. The only possibility is that $G$ is virtually $\mathbb{Z}$ itself, which is impossible as $G$ is supposed to be one-ended.
\end{proof}

\begin{cor}\label{cor:SubRAAG}
If a right-angled Artin group $A(\Gamma)$ embeds into $\mathrm{FB}_7$, then $\Gamma$ is a forest. Moreover, if $\Gamma$ is disconnected, then $A(\Gamma)$ has infinite index in $\mathrm{FB}_7$. 
\end{cor}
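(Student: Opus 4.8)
The plan is to split the statement into its two assertions and treat each by exhibiting, in the contrary case, one of the three subgroups forbidden by Lemma~\ref{lem:NoSubgroup}.

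For the first assertion, suppose that $\Gamma$ is not a forest, so that $\Gamma$ contains a cycle. Taking a shortest cycle, I may assume it is chordless, i.e.\ an induced cycle $C_n \subset \Gamma$ with $n \geq 3$; and since a full subgraph of a defining graph gives rise to a (parabolic) subgroup, the chain of inclusions $A(C_n) \hookrightarrow A(\Gamma) \hookrightarrow \mathrm{FB}_7$ holds. I would then distinguish three cases according to $n$. If $n=3$, then $A(C_3) \simeq \mathbb{Z}^3$, which is impossible by Lemma~\ref{lem:NoSubgroup}. If $n=4$, then $A(C_4) \simeq \mathbb{F}_2 \times \mathbb{F}_2$ (the two pairs of opposite vertices of the square generating the two free factors), again impossible by Lemma~\ref{lem:NoSubgroup}. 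Finally, if $n \geq 5$, then $A(C_n)$ contains the fundamental group of a closed hyperbolic surface of genus $\geq 2$, by the theorem of Droms, Servatius and Servatius on surface subgroups of graph groups; this contradicts Lemma~\ref{lem:NoSubgroup} once more. Hence no cycle can occur and $\Gamma$ is a forest.

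For the second assertion, assume that $\Gamma$ is disconnected, with components $\Gamma_1, \ldots, \Gamma_k$ where $k \geq 2$. Then $A(\Gamma)$ decomposes as the free product $A(\Gamma_1) \ast \cdots \ast A(\Gamma_k)$ of at least two non-trivial infinite torsion-free groups, so $A(\Gamma)$ has infinitely many ends. On the other hand, I would argue that $\mathrm{FB}_7 = C(P_5^{\mathrm{opp}})$ is one-ended: the graph $P_5^{\mathrm{opp}}$ is connected, and a direct inspection shows that it admits no separating clique, which by the standard criterion for the number of ends of a right-angled Coxeter group rules out any splitting over a finite subgroup. Since the number of ends is a commensurability invariant, a subgroup with infinitely many ends cannot have finite index in the one-ended group $\mathrm{FB}_7$; therefore $A(\Gamma)$ has infinite index, as claimed.

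The routine inputs are the identifications $A(C_3) \simeq \mathbb{Z}^3$ and $A(C_4) \simeq \mathbb{F}_2 \times \mathbb{F}_2$, the passage from a cycle to a chordless cycle, and the invariance of the number of ends under commensurability. The two genuine external ingredients, and hence the main obstacles to a fully self-contained argument, are the existence of closed surface subgroups in $A(C_n)$ for $n \geq 5$, for which I rely on the surface-subgroup theorem for graph groups, and the verification that $P_5^{\mathrm{opp}}$ has no separating clique, which is the crux of the one-endedness of $\mathrm{FB}_7$ used in the second part.
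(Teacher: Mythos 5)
Your proposal is correct and follows essentially the same route as the paper: induced cycles of length $3$, $4$, and $\geq 5$ yield $\mathbb{Z}^3$, $\mathbb{F}_2\times\mathbb{F}_2$, and a closed surface subgroup (via Droms--Servatius--Servatius), each excluded by Lemma~\ref{lem:NoSubgroup}, and the disconnected case is handled by the one-endedness of $\mathrm{FB}_7=C(P_5^{\mathrm{opp}})$ through the absence of a separating clique. The only additions over the paper's argument are the explicit reduction to a shortest (hence chordless) cycle and the explicit appeal to ends as a commensurability invariant, both of which the paper leaves implicit.
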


\begin{proof}
If $\Gamma$ contains an induced cycle of length three (resp.\ four, at least five), then $A(\Gamma)$ contains a subgroup isomorphic to $\mathbb{Z}^3$ (resp.\ $\mathbb{F}_2 \times \mathbb{F}_2$, the fundamental group of a closed surface of genus $\geq 2$ (see for instance \cite{MR952322})), which prevents $\mathrm{FB}_7$ from containing $A(\Gamma)$ according to Lemma~\ref{lem:NoSubgroup}. Therefore, $\Gamma$ must be a forest. Moreover, if $\Gamma$ is disconnected, then $A(\Gamma)$ splits as a free product of two infinite groups, which prevents $\mathrm{FB}_7$ from containing $A(\Gamma)$ as a finite-index subgroup since $\mathrm{FB}_7$ is one-ended (which amounts to saying, when thinking of $\mathrm{FB}_7$ as the right-angled Coxeter group $C(P_5^\mathrm{opp})$, that no complete subgraph separates $P_5^\mathrm{opp}$).  
\end{proof}

\noindent
In order to prove Theorem~\ref{thm:NotVirtRAAG}, it remains to distinguish $\mathrm{PFB}_7$ from right-angled Artin groups defined by trees. For this purpose, we introduce a specific subgroup.

\begin{definition}
Let $G$ be a group. An element $g \in G$ is \emph{thick} if its centraliser is not virtually abelian. The \emph{thick subgroup} $\mathrm{Thick}(G)$ is the subgroup of $G$ generated by the centralisers of all its thick elements.
\end{definition}

\noindent
It is worth noticing that, since conjugates of thick elements are thick themselves, thick subgroups are always normal. The key observation is that thick subgroups are not proper for right-angled Artin groups defined by trees while the thick subgroup of $\mathrm{PFB}_7$ is rather small (in particular, it has infinite index). This is the content of the next two lemmas.

\begin{lemma}\label{lem:ModThickRAAG}
For every tree $\Gamma$ with at least three vertices, $\mathrm{Thick}(A(\Gamma)) = A(\Gamma)$. 
\end{lemma}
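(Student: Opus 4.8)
The plan is to prove that every standard generator of $A(\Gamma)$ lies in $\mathrm{Thick}(A(\Gamma))$; since these generators generate the group and $\mathrm{Thick}(A(\Gamma))$ is a subgroup, the equality follows. The first step is to compute the centraliser of a vertex-generator using Proposition~\ref{prop:CentralisersGP}. For a vertex $v$, the element $v$ is graphically cyclically reduced with support the single vertex $\{v\}$, so the proposition gives $C(v) = \langle v \rangle \times \langle \mathrm{link}(v) \rangle = \langle \mathrm{star}(v) \rangle$. This is the parabolic right-angled Artin subgroup $A(\mathrm{star}(v))$, which is virtually abelian if and only if $\mathrm{star}(v)$ is complete: if $\mathrm{star}(v)$ is not complete it contains two non-adjacent vertices spanning a copy of $\mathbb{F}_2$, and no group containing $\mathbb{F}_2$ is virtually abelian. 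Hence $v$ is a \emph{thick} element precisely when $\mathrm{link}(v)$ is not a clique.

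I would then bring in the tree hypothesis. Since a tree contains no triangle, the neighbours of any vertex are pairwise non-adjacent, so $\mathrm{link}(v)$ is always an independent set. Consequently $\mathrm{link}(v)$ is a clique exactly when it has at most one vertex, and the criterion above simplifies to: $v$ is thick if and only if $\deg(v) \geq 2$. Every such vertex therefore satisfies $v \in C(v) = \langle \mathrm{star}(v) \rangle \leq \mathrm{Thick}(A(\Gamma))$, so all generators of degree at least two are captured immediately.

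The remaining task is to place the leaves inside the thick subgroup, and this is where the hypothesis that $\Gamma$ has at least three vertices enters. Let $v$ be a leaf with unique neighbour $w$. If $w$ also had degree one, then $v$ and $w$ would have no other neighbours, so by connectedness $\Gamma$ would be the two-vertex graph on $\{v,w\}$, contradicting the assumption that $\Gamma$ has at least three vertices. Thus $\deg(w) \geq 2$, so $w$ is thick by the previous step, and $v \in \mathrm{link}(w) \subset \mathrm{star}(w)$ yields $v \in \langle \mathrm{star}(w) \rangle = C(w) \leq \mathrm{Thick}(A(\Gamma))$. Every generator is now accounted for, giving $\mathrm{Thick}(A(\Gamma)) = A(\Gamma)$. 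I expect the only subtle point to be this leaf case, where both connectedness and the cardinality bound are needed to exclude the degenerate two-vertex tree; the centraliser computation and the independence of links are routine consequences of Proposition~\ref{prop:CentralisersGP} and the absence of triangles.
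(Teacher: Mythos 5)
Your proof is correct and follows essentially the same route as the paper: compute $C(v)=\langle\mathrm{star}(v)\rangle$, note that in a tree the link of a degree-$\geq 2$ vertex spans a free group of rank $\geq 2$ so such generators are thick, and then absorb each leaf into the centraliser of its (necessarily degree-$\geq 2$) neighbour. Your treatment is slightly more explicit about the centraliser being the star rather than the link and about why the three-vertex hypothesis is needed, but the argument is the same.
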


\begin{proof}
For every vertex $u \in \Gamma$ of degree $\geq 2$, the centraliser of the corresponding generator of $A(\Gamma)$ is $\langle \mathrm{link}(u) \rangle$, which is a free group of rank $\geq 2$. Therefore, such a generator is a thick element. Since a leaf of $\Gamma$ must be adjacent to some vertex of degree $\geq 2$, a generator of $A(\Gamma)$ that is not thick must belong to the centraliser of a thick centraliser. Therefore, every generator of $A(\Gamma)$ belongs to the thick subgroup, hence the desired equality.
\end{proof}

\begin{lemma}\label{lem:ModThickFB}
The quotient $\mathrm{FB}_7 / \mathrm{Thick}( \mathrm{PFB}_7)$ is isomorphic to the Coxeter group $C(\Gamma)$ where $\Gamma$ is the labelled graph defined as follows:
\begin{itemize}
	\item the underlying graph of $\Gamma$ is a complete graph with six vertices $x_1, \ldots, x_6$;
	\item the edge connecting $x_i$ and $x_j$ is labelled by $2$ whenever $|i-j| \geq 2$;
	\item the edge connecting $x_i$ and $x_{i+1}$ is labelled by $3$ if $i \neq 3$ and $\infty$ otherwise.
\end{itemize}
In particular, it is infinite.
\end{lemma}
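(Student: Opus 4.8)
The plan is to exhibit the quotient map explicitly and identify its kernel with $\mathrm{Thick}(\mathrm{PFB}_7)$. First I would observe that the quotient makes sense: since $\mathrm{PFB}_7$ is normal in $\mathrm{FB}_7$ and the thickness of an element of $\mathrm{PFB}_7$ is detected by its centraliser in $\mathrm{FB}_7$ (a finite-index overgroup of its centraliser in $\mathrm{PFB}_7$, hence non-virtually-abelian for one exactly when for the other), conjugation by $\mathrm{FB}_7$ permutes the thick elements of $\mathrm{PFB}_7$ together with their $\mathrm{PFB}_7$-centralisers, so $\mathrm{Thick}(\mathrm{PFB}_7)$ is normal in $\mathrm{FB}_7$. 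Let $\pi \colon \mathrm{FB}_7 \to C(\Gamma)$ be the surjection $\sigma_i \mapsto x_i$; comparing presentations, $\ker \pi$ is the normal closure in $\mathrm{FB}_7$ of $\{(\sigma_i\sigma_{i+1})^3 : i \in \{1,2,4,5\}\}$. I would then prove the two inclusions $\ker \pi \subseteq \mathrm{Thick}(\mathrm{PFB}_7)$ and $\mathrm{Thick}(\mathrm{PFB}_7) \subseteq \ker \pi$.

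For $\ker \pi \subseteq \mathrm{Thick}(\mathrm{PFB}_7)$, I first record that, for graphically cyclically reduced $g$, Proposition~\ref{prop:CentralisersGP} shows $C_{\mathrm{FB}_7}(g)$ is virtually abelian if and only if $\langle \mathrm{link}(\mathrm{supp}(g)) \rangle$ is, and that a right-angled Coxeter group $C(\Lambda)$ fails to be virtually abelian exactly when some vertex of $\Lambda$ has at least two non-neighbours in $\Lambda$. A direct computation of common neighbourhoods in $P_5^{\mathrm{opp}}$ gives $\mathrm{link}(\{1,2\}) = \{4,5,6\}$ and $\mathrm{link}(\{5,6\}) = \{1,2,3\}$, both non-virtually-abelian (the vertex $5$, resp.\ $2$, has two non-neighbours). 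Hence $(\sigma_1\sigma_2)^3$ and $(\sigma_5\sigma_6)^3$ are thick elements lying in $\mathrm{PFB}_7$ (each $\sigma_i\sigma_{i+1}$ is a $3$-cycle in $\mathrm{Sym}(7)$, so its cube is pure), and each lies in its own centraliser. Moreover $(\sigma_4\sigma_5)^3 \in C_{\mathrm{PFB}_7}((\sigma_1\sigma_2)^3)$ and $(\sigma_2\sigma_3)^3 \in C_{\mathrm{PFB}_7}((\sigma_5\sigma_6)^3)$, since $\{4,5\} \subseteq \mathrm{link}(\{1,2\})$, $\{2,3\} \subseteq \mathrm{link}(\{5,6\})$, and all the relevant cubes are pure. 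Thus all four relators lie in $\mathrm{Thick}(\mathrm{PFB}_7)$, and normality yields the inclusion.

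The reverse inclusion is the heart of the matter and amounts to $\pi(\mathrm{Thick}(\mathrm{PFB}_7)) = 1$, i.e.\ $\pi(C_{\mathrm{PFB}_7}(g)) = 1$ for every thick $g$. Conjugating and using $\pi(a^{-1} X a) = \pi(a)^{-1}\pi(X)\pi(a)$, it suffices to treat graphically cyclically reduced $g$. The first sub-step is to classify thick supports: computing $\mathrm{link}(\Lambda)$ for every $\Lambda \subseteq \{1,\dots,6\}$ and applying the criterion above shows that the only non-trivial thick supports are $\{1\},\{2\},\{5\},\{6\},\{1,2\},\{5,6\}$ (thickness is downward closed in the support, so it suffices to locate the maximal thick supports $\{1,2\}$ and $\{5,6\}$). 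Since an element with single-vertex essential support is conjugate to some $\sigma_i$, hence a transposition and not pure, every cyclically reduced thick element of $\mathrm{PFB}_7$ has essential support $\{1,2\}$ or $\{5,6\}$, so is a power $(\sigma_1\sigma_2)^k$ or $(\sigma_5\sigma_6)^k$. By Proposition~\ref{prop:CentralisersGP}, $C_{\mathrm{FB}_7}((\sigma_1\sigma_2)^k) = \langle \sigma_1\sigma_2 \rangle \times \langle \sigma_4,\sigma_5,\sigma_6 \rangle$, and symmetrically for $\{5,6\}$.

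The crux is then to understand $\pi$ on the block $H := \langle \sigma_4,\sigma_5,\sigma_6 \rangle$: I claim $\pi|_H$ coincides with the permutation representation $\rho \colon H \to \mathrm{Sym}\{4,5,6,7\}$, $\sigma_i \mapsto (i\ i{+}1)$. Indeed $\pi(H) = \langle x_4,x_5,x_6 \rangle$ is a standard parabolic subgroup of the Coxeter group $C(\Gamma)$, hence itself the Coxeter group with matrix $m_{45}=m_{56}=3$, $m_{46}=2$, which is the type-$A_3$ presentation of $\mathrm{Sym}(4)$; matching generators identifies it with $\mathrm{Sym}\{4,5,6,7\}$ by an isomorphism $\phi$ with $\phi \circ \rho = \pi|_H$, so $\ker \pi|_H = \ker \rho = H \cap \mathrm{PFB}_7$ and $\pi(H \cap \mathrm{PFB}_7) = 1$. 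As the supports $\{1,2\}$ and $\{4,5,6\}$ are disjoint, the permutation map splits over the two blocks, giving $C_{\mathrm{PFB}_7}((\sigma_1\sigma_2)^k) = \langle (\sigma_1\sigma_2)^3 \rangle \times (H \cap \mathrm{PFB}_7)$; since $\pi((\sigma_1\sigma_2)^3) = (x_1x_2)^3 = 1$ and $\pi(H \cap \mathrm{PFB}_7) = 1$, we obtain $\pi(C_{\mathrm{PFB}_7}((\sigma_1\sigma_2)^k)) = 1$, the case $\{5,6\}$ being symmetric. This gives $\mathrm{Thick}(\mathrm{PFB}_7) \subseteq \ker \pi$, whence $\mathrm{FB}_7/\mathrm{Thick}(\mathrm{PFB}_7) \cong C(\Gamma)$, which is infinite because $\langle x_3,x_4 \rangle \cong \mathbb{D}_\infty$. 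The main obstacle is precisely this identification of $\pi|_H$ with the permutation action — recognising the relevant standard parabolic of $C(\Gamma)$ as a symmetric group and matching Coxeter presentations — together with the bookkeeping behind the classification of thick supports.
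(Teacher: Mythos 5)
Your proposal is correct and follows essentially the same route as the paper: the same homomorphism $\pi$, the same identification of $\ker\pi$ with the normal closure of the four cubes, the same classification of thick elements via links in $P_5^{\mathrm{opp}}$ and Proposition~\ref{prop:CentralisersGP}, and the same identification of $\pi$ on $\langle\sigma_4,\sigma_5,\sigma_6\rangle$ with the permutation representation onto $\mathrm{Sym}(4)$. If anything you are slightly more careful than the paper's Claim~\ref{claim:ThickFB}, which overlooks that the torsion elements $\sigma_1,\sigma_2,\sigma_5,\sigma_6$ are also thick in $\mathrm{FB}_7$ (harmless there, since they are not pure, exactly as you observe).
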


\begin{proof}
Let $\pi : \mathrm{FB}_7 \to C(\Gamma)$ denote the morphism that sends $\sigma_i$ to $x_i$ for every $1 \leq i \leq 6$. Our goal is to prove that $\mathrm{ker}(\pi)= \mathrm{Thick}(\mathrm{PFB}_7)$, which will conclude the proof of our lemma. First, let us verify that $\mathrm{Thick}(\mathrm{PFB}_7)$ is contained in $\mathrm{ker}(\pi)$. 

\begin{claim}\label{claim:ThickFB}
An element of $\mathrm{FB}_7$ is thick if and only if it is conjugate to a non-trivial power of $\sigma_1\sigma_2$ or $\sigma_5\sigma_6$.
\end{claim}

\noindent
We think of $\mathrm{FB}_7$ as the right-angled Coxeter group $C(P_5^\mathrm{opp})$. Let $g \in \mathrm{FB}_7$ be a thick element. Up to conjugating $g$, we can assume that $g$ is graphically cyclically reduced. Because $g$ has infinite order, necessarily $\mathrm{supp}(g)$ is not complete. And, because the centraliser of $g$ is not virtually abelian, it follows from Proposition~\ref{prop:CentralisersGP} that $\mathrm{link}(\mathrm{supp}(g))$ contains at least two non-adjacent vertices but is not just a pair of two non-adjacent vertices. In $P_5^\mathrm{opp}$, there are only two possibilities: either $\mathrm{supp}(g)= \{ \sigma_1, \sigma_2\}$ and $\mathrm{link}(\mathrm{supp}(g))=\{ \sigma_4, \sigma_5, \sigma_6\}$, or $\mathrm{supp}(g)= \{ \sigma_5, \sigma_6\}$ and $\mathrm{link}(\mathrm{supp}(g))= \{ \sigma_1, \sigma_2, \sigma_3\}$. In the first case, $g$ is a non-trivial power of $\sigma_1 \sigma_2$; and, in the second case, $g$ is a non-trivial power of $\sigma_5 \sigma_6$. Conversely, it follows from Proposition~\ref{prop:CentralisersGP} that the centraliser of a non-trivial power $(\sigma_1 \sigma_2)^k$ (resp.\ $(\sigma_5 \sigma_6)^k$) is $\langle \sigma_1 \sigma_2 \rangle \times \langle \sigma_4 , \sigma_5, \sigma_6 \rangle$ (resp.\ $\langle \sigma_1, \sigma_2, \sigma_3 \rangle \times \langle \sigma_5 \sigma_6 \rangle$), which is isomorphic to $\mathbb{Z} \times (\mathbb{Z}_2 \ast (\mathbb{Z}_2\times \mathbb{Z}_2))$. Thus, the centraliser of $\sigma_1 \sigma_2$ (resp.\ $\sigma_5 \sigma_6$) is indeed not virtually abelian. This concludes the proof of Claim~\ref{claim:ThickFB}. 

\medskip \noindent
Since an element of $\mathrm{PFB}_7$ is thick in $\mathrm{PFB}_7$ if and only if it is thick in $\mathrm{FB}_7$, it follows from Claim~\ref{claim:ThickFB} that an element of $\mathrm{PFB}_7$ is thick if and only if it is conjugate to a non-trivial power of $(\sigma_1\sigma_2)^3$ or $(\sigma_5 \sigma_6)^3$. We deduce from Proposition~\ref{prop:CentralisersGP} that the centraliser of $(\sigma_1\sigma_2)^3$ in $\mathrm{FB}_7$ is $\langle \sigma_1\sigma_2 \rangle \times \langle \sigma_4, \sigma_5, \sigma_6 \rangle$, so the centraliser of $(\sigma_1\sigma_2)^3$ in $\mathrm{PFB}_7$ is
$$\langle (\sigma_1 \sigma_2)^3 \rangle \times \left( \mathrm{PFB}_7 \cap \langle \sigma_4,\sigma_5, \sigma_6 \rangle \right).$$
By noticing that $\pi$ restricts on $\langle \sigma_1, \sigma_2, \sigma_3 \rangle \simeq \mathrm{FB}_4$ (resp.\ $\langle \sigma_4, \sigma_5, \sigma_6 \rangle \simeq \mathrm{FB}_4$) to the canonical map to the permutation group $\langle x_1,x_2,x_3 \rangle \simeq \mathrm{Sym}(4)$ (resp.\ $\langle x_4,x_5,x_6 \rangle \simeq \mathrm{Sym}(4)$), it follows that the centraliser above is contained in the kernel of $\pi$. Symmetrically, we show that the centraliser of $(\sigma_5\sigma_6)^3$ in $\mathrm{PFB}_7$ is contained in $\mathrm{ker}(\pi)$. Thus, we have proved that $\mathrm{ker}(\pi)$ contains the centraliser in $\mathrm{PFB}_7$ of every thick element of $\mathrm{PFB}_7$. In other words, $\mathrm{Thick}(\mathrm{PFB}_7) \leq \mathrm{ker}(\pi)$, as desired.

\medskip \noindent
By comparing the Coxeter presentations of $\mathrm{FB}_7$ and $C(\Gamma)$, it is clear that $\mathrm{ker}(\pi)$ coincides with the normal closure in $\mathrm{FB}_7$ of $\{ (\sigma_1\sigma_2)^3, (\sigma_2\sigma_3)^3, (\sigma_4\sigma_5)^3, (\sigma_5\sigma_6)^3 \}$. Because $\mathrm{Thick}(\mathrm{PFB}_7)$ is a normal subgroup of $\mathrm{FB}_7$, as a consequence of Fact~\ref{fact:SubThickNormal} below, it suffices to notice that $(\sigma_1\sigma_2)^3, (\sigma_2\sigma_3)^3, (\sigma_4\sigma_5)^3, (\sigma_5\sigma_6)^3$ all belong to $\mathrm{Thick}(\mathrm{PFB}_7)$ in order to conclude that $\mathrm{ker}(\pi)$ is contained in $\mathrm{Thick}(\mathrm{PFB}_7)$. But we already know that $(\sigma_1\sigma_2)^3$ and $(\sigma_5\sigma_6)^3$ are thick elements of $\mathrm{PFB}_7$, and $(\sigma_2\sigma_3)^3$ (resp.\ $(\sigma_4 \sigma_5)^3$) belongs to the centraliser of $(\sigma_5\sigma_6)^3$ (resp.\ of $(\sigma_1\sigma_2)^3$). 

\begin{fact}\label{fact:SubThickNormal}
Let $G$ be a group. For every normal subgroup $H \lhd G$, $\mathrm{Thick}(H)$ is a normal subgroup of $G$. 
\end{fact}

\noindent
The action of $G$ on $H$ by conjugation permutes the thick elements of $H$ and sends centralisers to centralisers. Therefore, $G$ permutes the centralisers of the thick elements of $H$, proving that $\mathrm{Thick}(H)$ is stabilised by conjugation, or equivalently that $\mathrm{Thick}(H)$ is a normal subgroup of $G$. 
\end{proof}

\begin{proof}[Proof of Theorem~\ref{thm:NotVirtRAAG}.]
Assume for contradiction that $\mathrm{PFB}_7$ contains a right-angled Artin group $A(\Gamma)$ as a finite-index subgroup. It follows from Corollary~\ref{cor:SubRAAG} that $\Gamma$ must be a tree (with at least three vertices since $\mathrm{FB}_7$ is not virtually abelian), hence $\mathrm{Thick}(A(\Gamma))= A(\Gamma)$ according to Lemma~\ref{lem:ModThickRAAG}. But we clearly have $\mathrm{Thick}(A(\Gamma)) \leq \mathrm{Thick}(\mathrm{PFB}_7)$, hence
$$|\mathrm{PFB}_7 / \mathrm{Thick}(\mathrm{PFB}_7)| \leq | \mathrm{PFB}_7 / A(\Gamma)| < \infty.$$
This contradicts Lemma~\ref{lem:ModThickFB}, which implies that $\mathrm{PFB}_7 / \mathrm{Thick}(\mathrm{PFB}_7)$ is infinite. 
\end{proof}

\section{Pure flat braid groups are not right-angled Artin groups}

\noindent
In this section, we prove the main result of this article, namely:

\begin{thm}\label{thm:NeverRAAG}
For every $n=7$ or $\geq 11$, $\mathrm{PFB}_n$ is not virtually a right-angled Artin group.
\end{thm}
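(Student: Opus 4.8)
The plan is to reduce the general case $n \geq 11$ to the already-established case $n=7$ (Theorem~\ref{thm:NotVirtRAAG}) by exhibiting a product structure inside $\mathrm{PFB}_n$ that must be respected by any right-angled Artin group commensurable with it. First I would identify inside $\mathrm{PFB}_n$ a natural copy of $\mathrm{PFB}_7 \times \mathrm{PFB}_{n-7}$. Thinking of $\mathrm{FB}_n$ as $C(P_{n-2}^{\mathrm{opp}})$, the generators $\sigma_1, \ldots, \sigma_6$ and the generators $\sigma_8, \ldots, \sigma_{n-1}$ are supported on two subpaths of $P_{n-2}$ whose vertices are pairwise non-adjacent in $P_{n-2}$, hence pairwise adjacent in the opposite graph $P_{n-2}^{\mathrm{opp}}$. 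This yields a join subgraph $\Lambda \ast \Xi \subset P_{n-2}^{\mathrm{opp}}$ with $\langle \Lambda \rangle \simeq \mathrm{FB}_7$ and $\langle \Xi \rangle \simeq \mathrm{FB}_{n-7}$, so intersecting with the pure subgroup gives the desired product $\mathrm{PFB}_7 \times \mathrm{PFB}_{n-7}$ as a finite-index subgroup of a join-parabolic subgroup of $\mathrm{FB}_n$. The key structural point, via Lemma~\ref{lem:MaxProdInGP}, is that this join is \emph{maximal}: since $\mathrm{PFB}_7$ and $\mathrm{PFB}_{n-7}$ are acylindrically hyperbolic (Corollary~\ref{cor:PFBnotProd}), neither factor splits further, so this is a maximal product subgroup, and it is not contained in any vertex-group.

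The heart of the argument is to show that this product decomposition is preserved under commensurability. Suppose for contradiction that $\mathrm{PFB}_n$ has a finite-index subgroup isomorphic to some right-angled Artin group $A(\Delta)$. The maximal product subgroup $Q = \mathrm{PFB}_7 \times \mathrm{PFB}_{n-7}$ intersects $A(\Delta)$ in a finite-index subgroup $\dot{Q}$, which is itself a product of finite-index subgroups $H \times K$ with $H \leq_{f.i.} \mathrm{PFB}_7$ and $K \leq_{f.i.} \mathrm{PFB}_{n-7}$. By Lemma~\ref{lem:GPcontainedInProduct}, since $\dot{Q}$ is commensurable to a product of two infinite groups, it sits inside a maximal product subgroup of $A(\Delta)$, which by Lemma~\ref{lem:MaxProdInGP} is conjugate to $\langle \Xi \rangle$ for some maximal join $\Xi \subset \Delta$; decomposing $\Xi$ into its irreducible join-factors realises $\dot Q$ inside a product $A(\Xi_1) \times \cdots \times A(\Xi_m)$ of right-angled Artin groups. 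Here is where I would invoke the rigidity machinery of Section~\ref{section:IMC}: each factor $A(\Xi_j)$ has almost stable centralisers (Corollary~\ref{cor:StableCent}), and $H$ (being a finite-index subgroup of the acylindrically hyperbolic group $\mathrm{PFB}_7$, with appropriate care using Corollary~\ref{cor:GPimc} applied to its ambient graph-product structure) is non-cyclic with an IMC generating set. Lemma~\ref{lem:IMCandProduct} then forces $H$ into a single factor $A(\Xi_i)$ modulo the virtual centres of the others; since right-angled Artin groups defined by non-trivial graphs have trivial virtual centre on their non-central factors, this essentially places $H$ inside one $A(\Xi_i)$.

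From this I would conclude that $\mathrm{PFB}_7$ is commensurable to a finite-index subgroup of the right-angled Artin group $A(\Xi_i)$, i.e.\ $\mathrm{PFB}_7$ is virtually a right-angled Artin group, contradicting Theorem~\ref{thm:NotVirtRAAG}. More precisely, once $H$ lands inside $A(\Xi_i)$, the finite-index subgroup $H$ of $\mathrm{PFB}_7$ is itself a subgroup of a right-angled Artin group; by applying the same IMC/product argument symmetrically to $K$ and using Lemma~\ref{lem:BigInterProd} to pin down the factors (so that $H$ and $K$ occupy complementary sets of factors), one sees $H$ has finite index in $A(\Xi_i)$, making $\mathrm{PFB}_7$ virtually a right-angled Artin group — the contradiction. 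I would handle the $n=7$ case separately as already done in Theorem~\ref{thm:NotVirtRAAG}, so the present proof only treats $n \geq 11$ (the gap $8 \leq n \leq 10$ being excluded because the complementary factor $\mathrm{PFB}_{n-7}$ must itself be infinite and non-elementary, which requires $n-7 \geq 4$, i.e.\ $n \geq 11$).

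I expect the main obstacle to be the bookkeeping around finite-index subgroups and centres: Lemma~\ref{lem:IMCandProduct} only concludes containment in $A_i$ \emph{up to the virtual centres} $\mathrm{VZ}(A_j)$ of the other factors, so I must argue that these virtual-centre contributions do not obstruct the final commensurability claim. The clean way to do this is to note that a finite-index subgroup of $\mathrm{PFB}_7$ cannot surject onto or embed nontrivially into an abelian virtual centre in a way that destroys the isomorphism type, together with the fact that $\mathrm{PFB}_7$ is acylindrically hyperbolic hence not virtually a product (Corollary~\ref{cor:PFBnotProd}), which prevents $H$ from being smeared across several factors. The verification that $H$ indeed admits an IMC generating set also requires a small argument, since $H$ is only a finite-index subgroup of $\mathrm{PFB}_7$ rather than a graph product on the nose; I would derive this from Proposition~\ref{prop:AcylIMC}, checking that $H$ is acylindrically hyperbolic with trivial finite radical.
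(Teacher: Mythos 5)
Your overall architecture matches the paper's: reduce to $n=7$ via the maximal product subgroup $\mathrm{PFB}_7 \times \mathrm{PFB}_{n-7}$ and the IMC rigidity of Section~\ref{section:IMC}. But there is a genuine gap at the final step, and it is not a mere technicality. You apply Lemma~\ref{lem:IMCandProduct} with $H$ (a finite-index subgroup of $\mathrm{PFB}_7$) as the group with an IMC generating set and the right-angled Artin factors $A(\Xi_1), \ldots, A(\Xi_m)$ as the ambient product, so your conclusion is that $H$ sits with finite index inside some $A(\Xi_i)$; you then assert that this makes ``$\mathrm{PFB}_7$ virtually a right-angled Artin group.'' It does not: what you have shown is only that $\mathrm{PFB}_7$ is \emph{commensurable} to the right-angled Artin group $A(\Xi_i)$, which is strictly weaker than containing a right-angled Artin group as a finite-index subgroup. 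This distinction is precisely the point of the paper: Theorem~\ref{thm:Comm} shows that $\mathrm{FB}_7$, hence its finite-index subgroup $\mathrm{PFB}_7$, \emph{is} commensurable to $A(P_4)$. So the statement from which you derive your final contradiction is actually true, and no contradiction follows.

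The fix is to run the IMC argument in the opposite direction, as the paper does in Proposition~\ref{prop:MorphismProduct}: take the factors of the maximal product subgroup of the ambient right-angled Artin group as the groups admitting IMC generating sets (Lemma~\ref{lem:ProductIMCinGP}, Corollary~\ref{cor:GPimc}) and $\mathrm{PFB}_7 \times \mathrm{PFB}_{n-7}$ as the ambient product. Since $\mathrm{VZ}(\mathrm{PFB}_k)=1$ for $k \geq 4$ (Corollary~\ref{cor:FBVZ}), the virtual-centre correction terms in Lemma~\ref{lem:IMCandProduct} then vanish, and one obtains the containment $A(\Xi_i) \leq \mathrm{PFB}_7$ with finite index --- an honest finite-index right-angled Artin subgroup of $\mathrm{PFB}_7$, which does contradict Theorem~\ref{thm:NotVirtRAAG}. (Your direction also forces you to fight the non-trivial virtual centres of possible cyclic factors $A(\Xi_l) \simeq \mathbb{Z}$, an issue the paper's direction avoids entirely; this can be handled using $\mathrm{VZ}(\mathrm{PFB}_7 \times \mathrm{PFB}_{n-7})=1$, but your sketch of it is vague.) Two smaller points: the intersection $\dot Q = Q \cap A(\Delta)$ need not itself split as $H \times K$ with $H \leq \mathrm{PFB}_7$ and $K \leq \mathrm{PFB}_{n-7}$, so you must pass to the further finite-index subgroup $(Q_1 \cap \dot Q) \times (Q_2 \cap \dot Q)$; and the finite index of $H \times K$ in $A(\Xi_1) \times \cdots \times A(\Xi_m)$, which Lemma~\ref{lem:IMCandProduct} requires, needs the sandwiching argument via Lemma~\ref{lem:ForPFB} and Lemma~\ref{lem:BigInterProd}, which you allude to but do not carry out.
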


\noindent
We start by stating and proving a general criterion that allows us to show that, under some assumptions, if a group $G_1$ can be realised as a finite-index subgroup in a group $G_2$, then every factor of a maximal product subgroup of $G_2$ contains as a finite-index subgroup a factor of a maximal product subgroup of $G_1$. 

\begin{prop}\label{prop:MorphismProduct}
Let $G_1$ and $G_2$ be two torsion-free groups. Assume that:
\begin{itemize}
	\item[(i)] In both $G_1$ and $G_2$, a subgroup commensurable to a product of two infinite groups is contained in a maximal product subgroup.
	\item[(ii)] In $G_1$, every maximal product subgroup $Q$ decomposes as $Q_1 \times \cdots \times Q_s$ where each $Q_i$ either is infinite cyclic or admits an IMC generating set.
	\item[(iii)] In $G_2$, if two maximal product subgroups $P_1$ and $P_2$ are such that $P_1 \cap P_2$ has finite index in $P_1$, then $P_1=P_2$.
	\item[(iv)] $G_2$ has almost stable centralisers.
\end{itemize}
If $G_1$ is a finite-index subgroup of $G_2$ and if $P:= P_1 \times \cdots \times P_s$ is a maximal product subgroup of $G_2$ such that $\mathrm{VZ}(P)=\{1\}$ and such that no $P_i$ is virtually a product of two infinite groups, then there exists a maximal product subgroup $R:=R_1 \times \cdots \times R_s$ of $G_1$ such that each $R_i$ is a finite-index subgroup of $P_i$.
\end{prop}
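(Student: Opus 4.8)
The plan is to realise the sought-after subgroup as a maximal product subgroup of $G_1$ containing $P \cap G_1$, and then to match its factors with those of $P$ by means of Lemma~\ref{lem:IMCandProduct}.

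First I would produce $R$ and show that it sits inside $P$ with finite index. Since $[G_2 : G_1] < \infty$, the subgroup $P \cap G_1$ has finite index in $P = P_1 \times \cdots \times P_s$. As $s \geq 2$ and each $P_i$ is a non-trivial subgroup of the torsion-free group $G_2$, hence infinite, $P$ (and so $P \cap G_1$) is commensurable to a product of two infinite groups; by (i) applied inside $G_1$ there is a maximal product subgroup $R$ of $G_1$ with $P \cap G_1 \leq R$. Being a product subgroup, $R$ is itself a product of at least two infinite groups, so by (i) applied inside $G_2$ it lies in some maximal product subgroup $P'$ of $G_2$. Now $P \cap P' \supseteq P \cap G_1$ has finite index in $P$, whence $P = P'$ by (iii); therefore $R \leq P$. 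Together with $R \leq G_1$ and $P \cap G_1 \leq R$ this even gives $R = P \cap G_1$, which in particular has finite index in $P$.

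Next I would pin down the factors of $R$. By (ii) write $R = R_1 \times \cdots \times R_t$ with each $R_i$ infinite cyclic or admitting an IMC generating set. Since $R$ has finite index in $P$, any element centralising a finite-index subgroup of $R$ also centralises one of $P$, so $\mathrm{VZ}(R) \leq \mathrm{VZ}(P) = \{1\}$ and in particular $Z(R) = \{1\}$; consequently no $R_i$ is infinite cyclic, for the generator of such a factor would be central in $R$. Hence every $R_i$ is a non-cyclic group with an IMC generating set. The same mechanism gives $\mathrm{VZ}(P_k) = \{1\}$ for each $k$, a non-trivial element of $\mathrm{VZ}(P_k)$ placed in the $k$-th coordinate producing a non-trivial element of $\mathrm{VZ}(P)$. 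Finally, almost stable centralisers are inherited by subgroups (intersecting a centraliser with a subgroup cannot increase its index in the ambient centraliser), so by (iv) each $P_k$ has almost stable centralisers. I can now apply Lemma~\ref{lem:IMCandProduct}: for each $i$, viewing $R = R_i \times (\prod_{j \neq i} R_j)$ as a finite-index subgroup of $P_1 \times \cdots \times P_s$, with $R_i$ non-cyclic and IMC and the $P_k$ of almost stable centralisers, the lemma supplies an index $j(i)$ with $R_i$ contained in $P_{j(i)}$ in coordinate $j(i)$ and in $\mathrm{VZ}(P_k) = \{1\}$ elsewhere, that is $R_i \leq P_{j(i)}$. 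Setting $B_j := \prod_{i : j(i)=j} R_i \leq P_j$, we obtain $R = B_1 \times \cdots \times B_s$ of finite index in $P = P_1 \times \cdots \times P_s$, so $[P:R] = \prod_j [P_j : B_j]$ forces every $B_j$ to be a non-trivial finite-index subgroup of $P_j$; thus $j(\cdot)$ is surjective. It is injective as well: were some $B_j$ a product of two or more of the infinite factors $R_i$, then $P_j$ would be virtually a product of two infinite groups, against the hypothesis. Hence $t = s$ and, after reindexing, each $R_i = B_i$ has finite index in $P_i$, which is exactly the claim.

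I expect the main obstacle to be the first step, namely forcing $R \leq P$: this is where one must dovetail hypothesis (i) in \emph{both} groups with the rigidity (iii) stating that distinct maximal product subgroups of $G_2$ are not commensurable. Once $R \leq P$ is secured the remaining bookkeeping is essentially routine, the only delicate points being the exclusion of infinite cyclic factors through $Z(R) = \{1\}$ and the injectivity of $j(\cdot)$ through the irreducibility assumption on the $P_i$.
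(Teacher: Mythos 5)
Your proposal is correct and follows essentially the same route as the paper: produce a maximal product subgroup $R$ of $G_1$ containing $P \cap G_1$ via (i), force $R \leq P$ via (iii), rule out cyclic factors using $\mathrm{VZ}(P)=\{1\}$, apply Lemma~\ref{lem:IMCandProduct} to send each $R_i$ into some $P_{\sigma(i)}$, and use the irreducibility of the $P_i$ to make $\sigma$ bijective. The only differences are cosmetic (you additionally note $R = P \cap G_1$ and spell out the inheritance of almost stable centralisers and triviality of $\mathrm{VZ}(P_k)$, which the paper leaves implicit).
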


\begin{proof}
Let $P = P_1 \times \cdots \times P_s$ be a maximal product subgroup of $G_2$ such that $\mathrm{VZ}(P)=\{1\}$ and such that no $P_i$ is virtually a product of two infinite groups. It follows from $(i)$ that $P \cap G_1$ is contained in some maximal product subgroup $R \leq G_1$. Similarly, $R$ must be contained in some maximal product subgroup $P^+$ of $G_2$. Since $P \cap G_1$ has finite index in $P$, it follows from $(iii)$ that $P^+$ actually coincides with $P$. Let $R= R_1 \times \cdots \times R_m$ denote the decomposition of $R$ given by $(ii)$. Notice that, since $\mathrm{VZ}(P)$ is trivial, necessarily $\mathrm{VZ}(R)$ must be trivial as well, which implies that no $R_i$ is (virtually) cyclic. 

\medskip \noindent
Notice that $P \cap G_1 \leq R \leq P$ and that $P \cap G_1$ has finite index in $P$, so $R$ must have finite index in $P$. Applying Lemma~\ref{lem:IMCandProduct}, which is possible thanks to $(ii)$ and $(iv)$, we deduce that, for every $1 \leq i \leq m$, there exists $1 \leq \sigma(i) \leq s$ such that $R_i \leq P_{\sigma(i)}$. Notice that
$$P/R \equiv \prod\limits_{i=1}^r \left( P_i / \prod\limits_{j \in \sigma^{-1}(i)} R_j \right),$$
which must be finite. Consequently, $\prod_{j \in \sigma^{-1}(i)} R_j$ must have finite index in $P_i$ for every $1 \leq i \leq r$. But we know by assumption that no $P_i$ is virtually a product, so $\sigma$ must be bijective. Thus, up to reordering the factors of $P$, we have proved that $m=s$ and that $R_i$ has finite index in $P_i$ for every $1 \leq i \leq s$, as desired.
\end{proof}

\noindent
Thanks to Lemmas~\ref{lem:GPcontainedInProduct},~\ref{lem:ProductIMCinGP},~\ref{lem:BigInterProd}, and~\ref{lem:GPstableCent}, we have a good understanding of when a graph product satisfies the assumptions of Proposition~\ref{prop:MorphismProduct}. This applies in particular to flat braid groups, from which it is not difficult to deduce similar statements for their pure subgroups:

\begin{lemma}\label{lem:ForPFB}
For every $n \geq 2$, the following assertions hold:
\begin{itemize}
	\item[(i)] A subgroup of $\mathrm{PFB}_n$ commensurable to a product of two infinite groups is contained in a maximal product subgroup.
	\item[(ii)] In $\mathrm{PFB}_n$, if two maximal product subgroups $P_1$ and $P_2$ are such that $P_1 \cap P_2$ has finite index in $P_1$, then $P_1=P_2$.
	\item[(iii)] $\mathrm{PFB}_n$ has almost stable centralisers.
	\item[(iv)] The maximal product subgroups of $\mathrm{PFB}_n$ are the conjugates of $$\mathrm{PFB}_{i} \times \mathrm{PFB}_{n-i}:= \left( \mathrm{PFB}_n \cap \langle \sigma_1, \ldots, \sigma_{i-1} \rangle \right) \times \left( \mathrm{PFB}_n \cap \langle \sigma_{i+1}, \ldots, \sigma_{n-1} \right)$$ for $3 \leq i \leq n-3$.
\end{itemize}
\end{lemma}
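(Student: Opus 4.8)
The statement asserts four properties of $\mathrm{PFB}_n$ that are needed to apply Proposition~\ref{prop:MorphismProduct}. The natural strategy is to deduce each of them from the corresponding property of $\mathrm{FB}_n$, exploiting the fact that $\mathrm{PFB}_n$ is a finite-index (normal) subgroup of the graph product $\mathrm{FB}_n = C(P_{n-2}^{\mathrm{opp}})$. The key general principle is that being commensurable to a product, or having almost stable centralisers, are properties inherited between a group and its finite-index subgroups. So I would first record the relevant facts about $\mathrm{FB}_n$ as a right-angled Coxeter group, then transfer them to $\mathrm{PFB}_n$.

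\begin{proof}
We think of $\mathrm{FB}_n$ as the right-angled Coxeter group $C(P_{n-2}^{\mathrm{opp}})$, so that $\mathrm{PFB}_n$ is a finite-index normal subgroup.

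\medskip \noindent
\emph{Proof of $(iii)$.} Because $\mathrm{FB}_n$ is a graph product of finite groups, which trivially have almost stable centralisers, we would like to apply Lemma~\ref{lem:GPstableCent}; however, that lemma requires torsion-free vertex-groups, which fails here. Instead we argue directly. Any element $g \in \mathrm{PFB}_n$ may, after conjugation inside $\mathrm{FB}_n$, be assumed graphically cyclically reduced, and Proposition~\ref{prop:CentralisersGP} computes $C_{\mathrm{FB}_n}(g)$ and $C_{\mathrm{FB}_n}(g^k)$ explicitly as products of cyclic factors $\langle h_i \rangle$, single-vertex centralisers (which are finite, hence equal for $g$ and $g^k$), and a fixed link-subgroup. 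Comparing the two descriptions shows $C_{\mathrm{FB}_n}(g)$ has finite index in $C_{\mathrm{FB}_n}(g^k)$, so $\mathrm{FB}_n$ has almost stable centralisers. Intersecting with the finite-index subgroup $\mathrm{PFB}_n$ preserves this, giving $(iii)$.

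\medskip \noindent
\emph{Proof of $(iv)$.} By Lemma~\ref{lem:MaxProdInGP}, the maximal product subgroups of $\mathrm{FB}_n = C(P_{n-2}^{\mathrm{opp}})$ are the conjugates of $\langle \Lambda \rangle$ for $\Lambda$ a maximal join of $P_{n-2}^{\mathrm{opp}}$, together with any maximal products inside vertex-groups (which are impossible here, as vertex-groups are $\mathbb{Z}_2$). A join in $P_{n-2}^{\mathrm{opp}}$ corresponds to a pair of non-adjacent subsets in the path $P_{n-2}$; the maximal such decompositions are precisely the splittings $\{\sigma_1, \ldots, \sigma_{i-1}\}$ versus $\{\sigma_{i+1}, \ldots, \sigma_{n-1}\}$, with $\sigma_i$ omitted so that the two blocks commute, and with $3 \leq i \leq n-3$ so that both factors are non-cyclic. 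The maximal product subgroups of $\mathrm{PFB}_n$ are then obtained by intersecting these with the finite-index normal subgroup $\mathrm{PFB}_n$: this intersection of a join-subgroup splits as a product of the intersections of the two commuting factors, yielding the stated form. One must check that this intersection is genuinely a product of two infinite groups and that maximality is preserved under passing to a finite-index normal subgroup; the latter follows because conjugation in $\mathrm{FB}_n$ permutes these subgroups and $\mathrm{PFB}_n$ is normal.

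\medskip \noindent
\emph{Proof of $(i)$.} This is the transfer of Lemma~\ref{lem:GPcontainedInProduct} to $\mathrm{PFB}_n$. A subgroup $H \leq \mathrm{PFB}_n$ commensurable to a product of two infinite groups is, a fortiori, such a subgroup of $\mathrm{FB}_n$, so by Lemma~\ref{lem:GPcontainedInProduct} it is contained in a conjugate of a vertex-group or in a maximal product subgroup of $\mathrm{FB}_n$. The vertex-groups are finite, so the first alternative is excluded. Thus $H$ lies in a maximal product subgroup $g\langle \Lambda \rangle g^{-1}$ of $\mathrm{FB}_n$, and hence in its intersection with $\mathrm{PFB}_n$, which is a maximal product subgroup of $\mathrm{PFB}_n$ by $(iv)$.

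\medskip \noindent
\emph{Proof of $(ii)$.} This is the transfer of Lemma~\ref{lem:BigInterProd}. Given two maximal product subgroups $P_1, P_2$ of $\mathrm{PFB}_n$ with $P_1 \cap P_2$ of finite index in $P_1$, lift them via $(iv)$ to maximal product subgroups $\hat{P}_1 = g\langle \Phi \rangle g^{-1}$ and $\hat{P}_2 = h\langle \Psi \rangle h^{-1}$ of $\mathrm{FB}_n$, with $P_j = \hat{P}_j \cap \mathrm{PFB}_n$. Since $\mathrm{PFB}_n$ has finite index in $\mathrm{FB}_n$, the subgroup $\hat{P}_1 \cap \hat{P}_2$ has finite index in $\hat{P}_1$, so Lemma~\ref{lem:BigInterProd} gives $\hat{P}_1 = \hat{P}_2$, whence $P_1 = P_2$.
\end{proof}

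\medskip \noindent
\textbf{Anticipated main obstacle.} The genuinely delicate point is $(iv)$, and specifically the bookkeeping between $\mathrm{FB}_n$ and $\mathrm{PFB}_n$ that underlies all four parts. One must verify carefully that intersecting a maximal product subgroup of $\mathrm{FB}_n$ with the finite-index normal subgroup $\mathrm{PFB}_n$ yields a maximal product subgroup, that both resulting factors are infinite (requiring the constraint $3 \leq i \leq n-3$), and that no new maximal products appear that are not visible at the level of $\mathrm{FB}_n$. The subtlety is that maximality is a statement about \emph{all} product subgroups, and passing to a finite-index subgroup could a priori create or destroy maximal products; the torsion in the vertex-groups of $\mathrm{FB}_n$ means Lemma~\ref{lem:GPstableCent} cannot be invoked as a black box, so $(iii)$ must also be argued by hand via Proposition~\ref{prop:CentralisersGP}.
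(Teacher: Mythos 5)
Your treatment of $(i)$, $(ii)$, and $(iv)$ follows essentially the same transfer strategy as the paper: reduce everything to $\mathrm{FB}_n$ via Lemmas~\ref{lem:GPcontainedInProduct}, \ref{lem:MaxProdInGP}, and~\ref{lem:BigInterProd}, and use the fact that the intersection of a maximal product subgroup of $\mathrm{FB}_n$ with the normal finite-index subgroup $\mathrm{PFB}_n$ splits compatibly (the paper isolates this as Fact~\ref{fact:MaxSub}). Those parts are fine, modulo the same glossed-over maximality bookkeeping that the paper also glosses over.

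The proof of $(iii)$, however, contains a genuine error. You claim that comparing the centraliser formulas of Proposition~\ref{prop:CentralisersGP} for $g$ and $g^k$ yields ``a fixed link-subgroup'' and hence that \emph{$\mathrm{FB}_n$ has almost stable centralisers}. This is false: the vertex-groups of $C(P_{n-2}^{\mathrm{opp}})$ are copies of $\mathbb{Z}_2$, so a single-vertex syllable $a_i$ of a graphically cyclically reduced $g$ dies in $g^2$; the support of $g^k$ is then strictly smaller than that of $g$, its link is strictly larger, and the centraliser can jump by infinite index. The paper itself records exactly this phenomenon for $\mathbb{D}_\infty \times \mathbb{D}_\infty$, which embeds in $\mathrm{FB}_7$ as $\langle \sigma_1,\sigma_2\rangle \times \langle \sigma_4,\sigma_5\rangle$: the element $g=\sigma_1\sigma_2\sigma_4$ has virtually cyclic centraliser $\langle\sigma_4\rangle\times\langle\sigma_1\sigma_2\rangle\times\langle\sigma_6\rangle$, while $g^2=(\sigma_1\sigma_2)^2$ has centraliser $\langle\sigma_1\sigma_2\rangle\times\langle\sigma_4,\sigma_5,\sigma_6\rangle$, which is not virtually abelian. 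So the statement you derive is not true, and the step ``hence $\mathrm{FB}_n$ has almost stable centralisers, and intersecting with $\mathrm{PFB}_n$ preserves this'' collapses. The conclusion $(iii)$ is nonetheless correct, and your computational route can be repaired: a graphically cyclically reduced element of $\mathrm{PFB}_n$ can have no single-vertex factor $\Phi_i$ in its support decomposition, since the corresponding transpositions of $\mathrm{Sym}(n)$ coming from the different join factors have disjoint supports and a lone $\sigma_{j}$ would force a non-trivial image in $\mathrm{Sym}(n)$; with $r=0$ the two centraliser formulas really do coincide. But you did not make this observation, and it is the whole point. The paper sidesteps the issue entirely by invoking Corollary~\ref{cor:StableCent} together with the (cited, external) fact that pure flat braid groups embed into right-angled Artin groups.
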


\begin{proof}
Let $P$ be a product in $\mathrm{PFB}_n$. According to Lemma~\ref{lem:GPcontainedInProduct}, $P$ is contained in a maximal product subgroup $Q$ of $\mathrm{FB}_n$. Thinking of $\mathrm{FB}_n$ as a right-angled Coxeter group, we deduce from Lemma~\ref{lem:MaxProdInGP} that $Q$ is conjugate to $\langle \sigma_1, \ldots, \sigma_{i-1} \rangle \times \langle \sigma_{i+1}, \ldots, \sigma_{n-1}\rangle$ for some $3 \leq i \leq n-3$. Thus, $P$ is contained in a conjugate of $\mathrm{PFB}_{i} \times \mathrm{PFB}_{n-i}$. This proves $(iv)$. Moreover, our argument shows the following assertion, which we will use in order to prove the rest of the lemma:

\begin{fact}\label{fact:MaxSub}
The maximal product subgroups of $\mathrm{PFB}_n$ are the intersection with $\mathrm{PFB}_n$ of the maximal product subgroups of $\mathrm{FB}_n$. 
\end{fact}

 \noindent
If $H \leq \mathrm{PFB}_n$ is commensurable to a product of two infinite groups, then we know from Lemma~\ref{lem:GPcontainedInProduct} that $H$ is contained in a maximal product subgroup $Q$ of $\mathrm{FB}_n$. Thus, $H$ is contained in $Q \cap \mathrm{PFB}_n$, which is a maximal product subgroup of $\mathrm{PFB}_n$ according to Fact~\ref{fact:MaxSub}. This proves $(i)$.

\medskip \noindent
Let $P_1$ and $P_2$ be two maximal product subgroups of $\mathrm{PFB}_n$ such that $P_1 \cap P_2$ has finite index in $P_1$. According to Lemma~\ref{lem:GPcontainedInProduct}, $P_1$ (resp.\ $P_2$) is contained in a maximal product subgroup $P_1^+$ (resp.\ $P_2^+$) of $\mathrm{FB}_n$. Notice that, as a consequence of Fact~\ref{fact:MaxSub}, $P_1^+ \cap \mathrm{PFB}_n$ (resp.\ $P_2^+ \cap \mathrm{PFB}_n$) is a maximal product subgroup of $\mathrm{PFB}_n$. Since it contains $P_1$ (resp.\ $P_2$), necessarily $P_1= P_1^+ \cap \mathrm{PFB}_n$ (resp.\ $P_2= P_2^+ \cap \mathrm{PFB}_n$). Since $P_1^+ \cap P_2^+$ contains $P_1 \cap P_2$, and since the latter has finite index in $P_1$, and a fortiori in $P_1^+$, we deduce from Lemma~\ref{lem:BigInterProd} that $P_1^+ = P_2^+$. Hence 
$$P_1 = P_1^+ \cap  \mathrm{PFB}_n = P_2^+ \cap \mathrm{PFB}_n = P_2,$$
proving the assertion $(ii)$.

\medskip \noindent
Finally, the assertion $(iii)$ follows from Corollary~\ref{cor:StableCent}, since pure flat braid groups embed into right-angled Artin groups (as a consequence of \cite[Example~5.40]{MR4071367} (see also \cite{Farley}) and \cite[Theorem~1.2]{MR3868219}).
\end{proof}

\begin{proof}[Proof of Theorem~\ref{thm:NeverRAAG}.] 
For $n=7$, the desired conclusion is given by Theorem~\ref{thm:NotVirtRAAG}. From now on, assume that $n \geq 9$. Notice that, according to Lemmas~\ref{lem:GPcontainedInProduct},~\ref{lem:ProductIMCinGP}, and~\ref{lem:ForPFB}, the assumptions $(i)-(iv)$ of Proposition~\ref{prop:MorphismProduct} are satisfied for $G_1$ a right-angled Artin group and $G_2$ a pure flat braid group.

\medskip \noindent
Assume for contradiction that $\mathrm{PFB}_n$ contains a finite-index subgroup that is a right-angled Artin group $A(\Gamma)$. According to Lemma~\ref{lem:ForPFB}, $P:= \mathrm{PFB}_7 \times \mathrm{PFB}_{n-7}$ is a maximal product subgroup of $\mathrm{PFB}_n$. We know from Corollaries~\ref{cor:FBVZ} and~\ref{cor:PFBnotProd} that $\mathrm{PFB}_k$ is not virtually a product of two infinite groups and has a trivial virtual centre whenever $k \geq 4$. Consequently, Proposition~\ref{prop:MorphismProduct} applies and shows that $A(\Gamma)$ contains a maximal product subgroup $R:=R_1 \times R_2$ such that $R_1$ (resp.\ $R_2$) has finite index in $\mathrm{PFB}_7$ (resp.\ $\mathrm{PFB}_{n-7}$. Since $R_1$ must be a right-angled Artin group according to Corollary~\ref{cor:UniqueFactor}, we deduce that $\mathrm{PFB}_7$ is virtually a right-angled Artin group, contradicting Theorem~\ref{thm:NotVirtRAAG}. 
\end{proof}

\section{An instance of commensurability}

\noindent
In contrast with Theorem~\ref{thm:NeverRAAG}, in this section we prove the following observation:

\begin{thm}\label{thm:Commensurable}
The flat braid group $\mathrm{FB}_7$ is commensurable to the right-angled Artin group $A(P_4)$.
\end{thm}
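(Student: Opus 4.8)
The plan is to realise $\mathrm{FB}_7$ and $A(P_4)$, up to finite index, as fundamental groups of compact flip manifolds and then to produce a common finite-sheeted cover. Recall that a flip manifold is a graph manifold assembled from finitely many Seifert pieces of the form $\Sigma \times S^1$, with $\Sigma$ a compact hyperbolic surface with boundary, glued along their boundary tori by \emph{flips}, i.e.\ homeomorphisms interchanging the base and the fibre directions. The logical backbone is elementary: if $M_0 \to M_1$ and $M_0 \to M_2$ are finite covers, then $\pi_1(M_0)$ is a common finite-index subgroup of $\pi_1(M_1)$ and $\pi_1(M_2)$, so these two fundamental groups are commensurable.

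First I would reduce the problem to a commensurability between two flip-manifold groups. By the computation of Section~\ref{section:Lampraag}, $\mathrm{FB}_7$ contains a normal subgroup of index $8$ isomorphic to an index-$2$ subgroup of $G := \langle a,t \mid [a,tat^{-1}]=1\rangle$; in particular $\mathrm{FB}_7$ is commensurable to $G$, and it suffices to prove that $G$ is commensurable to $A(P_4)$. Next I would give $G$ an explicit topological model: taking $\Sigma$ to be a pair of pants and gluing two of the three boundary tori of $\Sigma \times S^1$ by a flip yields a compact flip manifold $M_2$ with a single Seifert piece and one JSJ torus, and a direct Van Kampen computation identifies $\pi_1(M_2)$ with $G$. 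On the other side, since $P_4$ is a finite tree, the construction of \cite{MR2376814} provides a compact flip manifold $M_1$ with $\pi_1(M_1) \cong A(P_4)$, whose JSJ decomposition is a chain of pair-of-pants Seifert pieces glued in a row by flips.

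The heart of the argument, and the step I expect to be the main obstacle, is the construction in Section~\ref{section:Flip} of a common finite cover $M_0$ of $M_1$ and $M_2$. The guiding principle is that every finite cover of a flip manifold is assembled from finite covers of its Seifert pieces, and that a finite cover of a piece $\Sigma \times S^1$ is again of the form $\Sigma' \times S^1$, determined by a finite cover $\Sigma' \to \Sigma$ of the base surface together with a finite cyclic cover of the $S^1$ fibre. A first point to make is that the JSJ graph of a finite cover need not be a topological cover of the base JSJ graph: since the Seifert pieces sit inside a Bass--Serre tree, a finite-index subgroup can produce a quotient graph containing cycles even when the base graph is a tree. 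This is precisely what allows the tree-shaped JSJ graph of $M_1$ and the loop-shaped JSJ graph of $M_2$ to acquire a common refinement. The remaining difficulty is one of compatibility: one must choose the base-surface covers and the fibre covers so that, over every JSJ torus of $M_1$ and of $M_2$ simultaneously, the induced data on the two adjacent pieces agree. The flips make this delicate, because over each gluing torus the base direction of one piece is identified with the fibre direction of the other, so the ``horizontal'' covering data on one piece is constrained by the ``vertical'' covering data on its neighbour, and these constraints must be solved consistently around the entire graph. I would resolve this by passing to sufficiently divisible covers and exploiting the abundance of finite covers of hyperbolic surfaces with prescribed boundary behaviour, building $M_0$ piece by piece while tracking how base and fibre slopes are permuted across the flips.

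Finally, once $M_0 \to M_1, M_2$ is constructed, the conclusion is immediate: $\pi_1(M_0)$ is a finite-index subgroup of both $\pi_1(M_1) \cong A(P_4)$ and $\pi_1(M_2) \cong G$, so $A(P_4)$ is commensurable to $G$, which is in turn commensurable to $\mathrm{FB}_7$ by the reduction above. Chaining these commensurabilities yields Theorem~\ref{thm:Commensurable}.
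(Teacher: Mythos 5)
Your proposal follows essentially the same route as the paper: reduce via the index-$8$ subgroup of $\mathrm{FB}_7$ to the one-relator group $\langle a,t\mid [a,tat^{-1}]=1\rangle$, realise it and $A(P_4)$ as fundamental groups of compact flip manifolds built from pieces $\mathbb{S}^1\times\mathbb{S}(3)$, and construct a common finite-sheeted cover by choosing compatible covers of the pair-of-pants bases and the fibres across the flips (the paper does this explicitly with two $4$-sheeted covers $\alpha,\beta:\mathbb{S}(6)\to\mathbb{S}(3)$). The outline is correct and matches the paper's argument, including the correct identification of the main compatibility difficulty.
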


\noindent
First, we show in Section~\ref{section:Lampraag} that $\mathrm{FB}_7$ is commensurable to a \emph{lampraag} $A(\mathbb{L}) \rtimes \mathbb{Z}$, thanks to some (quasi-)median geometry explained in Section~\ref{section:QM}. A key observation is that both $A(\mathbb{L}) \rtimes \mathbb{Z}$ and $A(P_4)$ can be described as fundamental groups of compact $3$-manifolds. Therefore, in order to deduce that these two groups are commensurable, it suffices to construct a common finite-sheeted cover for the two corresponding manifolds. This is what we do in Section~\ref{section:Flip}.

\subsection{Some (quasi-)median geometry}\label{section:QM}

\noindent
In the next section, we will use some quasi-median geometry of graph products, as introduced in \cite{QM}. In this section, we recall the few definitions and results that we need.

\begin{thm}[\cite{QM}]\label{thm:QM}
For every graph $\Gamma$ and every collection of groups $\mathcal{G}$ indexed by $\Gamma$, the Cayley graph 
$$\mathrm{QM}(\Gamma, \mathcal{G}):= \mathrm{Cayl} \left( \Gamma \mathcal{G} , \bigcup\limits_{G \in \mathcal{G}} G \backslash \{1\} \right).$$
is a quasi-median graph.
\end{thm}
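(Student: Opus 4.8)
The plan is to characterise $\QM$ combinatorially via graphically reduced words and then invoke the metric characterisation of quasi-median graphs. Recall from the reduced-word calculus recalled in Section~\ref{section:GP} that every element of $\Gamma\mathcal{G}$ admits a graphically reduced word, unique up to shuffling; its number of syllables is therefore a well-defined quantity $\ell(g)$. The first step is to verify that $\QM$ is connected (immediate, as $\bigcup_{G\in\mathcal{G}} G\setminus\{1\}$ generates $\Gamma\mathcal{G}$) and that its graph metric is $d(g,h)=\ell(g^{-1}h)$: two vertices are adjacent precisely when $g^{-1}h$ lies in a single vertex-group, and a shortest edge-path realises a graphically reduced word. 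Along the way I would record that the maximal cliques of $\QM$ are exactly the cosets $g\langle u\rangle$ of vertex-groups (with $u$ a single vertex of $\Gamma$), since $gx\sim gy$ with $x,y\in G_u$ forces $x^{-1}y\in G_u$; in particular every triangle is contained in such a coset.

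Second, I would appeal to the Bandelt--Mulder--Wilkeit characterisation: a connected graph is quasi-median if and only if it is weakly modular and contains no induced $K_{3,2}$ nor $K_4^-$ (the complete graph on four vertices minus an edge). The two forbidden-subgraph conditions are the easiest to dispatch and rely only on the uniqueness of reduced words up to shuffling. For $K_4^-$, after left-translating one vertex of a triangle to $1$, that triangle lies in a coset of a vertex-group $G_s$; two triangles sharing an edge then lie in cosets of $G_s$ and $G_t$ whose common edge forces $G_s\cap G_t\neq\{1\}$, hence $s=t$, so the two remaining vertices lie in a common clique and are adjacent. For $K_{3,2}$, placing one degree-$3$ vertex at $1$ makes the three opposite vertices single syllables $x_1,x_2,x_3$, while the second degree-$3$ vertex $y$ satisfies $\ell(y)=2$ with each $x_i^{-1}y$ a single syllable; but a length-two element has at most two graphically reduced words (arising from a single shuffle of two adjacent, distinct vertex-groups), so at most two distinct $x_i$ can serve as its first syllable, contradicting that $x_1,x_2,x_3$ are distinct.

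Third, and this is where the real work lies, I would establish weak modularity, i.e.\ the triangle and quadrangle conditions. The engine is a normal-form lemma describing how $\ell$ changes under right multiplication by a syllable $c\in G_s$: one has $\ell(gc)\le\ell(g)$ exactly when $g$ admits a graphically reduced word whose last syllable lies in $G_s$, in which case that syllable may be amalgamated with $c$. For the triangle condition, after translating the apex to $1$, adjacent vertices $v\sim w$ with $\ell(v)=\ell(w)=k$ give $w=vc$ with $c\in G_s$ and $\ell(vc)=k$; the lemma lets me write $v=v'a$ with $a\in G_s$ a shuffled-to-the-end last syllable and $\ell(v')=k-1$, and then $v'$ is a common neighbour of $v$ and $w=v'(ac)$ at distance $k-1$. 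The quadrangle condition is handled by an analogous but more delicate two-syllable analysis. I expect this verification of weak modularity --- in particular controlling the interaction of the shuffling equivalence with the prefix/suffix structure across the required case distinctions --- to be the main obstacle; the forbidden-subgraph checks and the distance formula are comparatively routine consequences of the reduced-word calculus of Section~\ref{section:GP}.
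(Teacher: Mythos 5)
The paper does not prove this statement: it is quoted from \cite{QM}, and the proof there follows exactly the route you outline (distance formula $d(g,h)=\ell(g^{-1}h)$ via graphically reduced words, triangles lying in cosets of vertex-groups, exclusion of induced $K_4^-$ and $K_{3,2}$, then the triangle and quadrangle conditions). Your treatment of the distance formula, the two forbidden subgraphs, and the triangle condition is correct; the only substantive piece you defer is the quadrangle condition, so let me record the one lemma that closes it. If $z$ admits a graphically reduced word ending with a syllable $c_1\in G_s$ and another ending with a syllable $c_2\in G_t$ with $s\neq t$, then uniqueness up to shuffling forces each of these syllables to shuffle past the other, so $s$ and $t$ are adjacent in $\Gamma$ and $z=z'c_1c_2=z'c_2c_1$ with $\ell(z')=\ell(z)-2$. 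Now, in the quadrangle condition with apex $1$, write $v=zc_1^{-1}$ and $w=zc_2^{-1}$ with $\ell(v)=\ell(w)=\ell(z)-1$; if $s=t$ then $v^{-1}w=c_1c_2^{-1}\in G_s$ and $v,w$ are adjacent or equal, which the hypotheses exclude, and if $s\neq t$ the lemma gives $v=z'c_2$ and $w=z'c_1$, so $z'$ is the required common neighbour at distance $\ell(z)-2$ from the apex. With that supplied, your proposal is a complete and faithful reconstruction of the argument in \cite{QM}.
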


\noindent
Quasi-median graphs can be defined as retracts of Hamming graphs (i.e.\ products of complete graphs). There are many alternative characterisations of quasi-median graphs (see for instance \cite{MR1297190}), but this definition is rather simple and it highlights the connection with median graphs (also known as one-skeletons of CAT(0) cube complexes\footnote{See \cite{MedianNotCCC} and references therein.}), which can be defined as retracts of hypercubes. It is also worth mentioning that median graphs coincide with triangle-free quasi-median graphs. As a consequence, Theorem~\ref{thm:QM} implies the well-known observation:

\begin{cor}\label{cor:RACGmedian}
For every graph $\Gamma$, the Cayley graph $\mathrm{Cayl}(C(\Gamma), \Gamma)$ of the right-angled Coxeter group $C(\Gamma)$ is a median graph. 
\end{cor}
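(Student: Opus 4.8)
The plan is to specialise Theorem~\ref{thm:QM} to the case where all vertex-groups are cyclic of order two, and then to upgrade ``quasi-median'' to ``median'' by means of the triangle-free criterion recorded just above the statement.

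First I would take $\mathcal{G}= \{ \mathbb{Z}_2 \mid u \in \Gamma\}$, so that, by the very definition of the right-angled Coxeter group, $\Gamma \mathcal{G}= C(\Gamma)$. Since each vertex-group $\mathbb{Z}_2$ has a unique non-trivial element, the generating set $\bigcup_{G \in \mathcal{G}} G \backslash \{1\}$ is naturally identified with the vertex-set of $\Gamma$, i.e.\ with the standard Coxeter generating set. Consequently $\mathrm{QM}(\Gamma, \mathcal{G})= \mathrm{Cayl}(C(\Gamma), \Gamma)$, and Theorem~\ref{thm:QM} tells us directly that this Cayley graph is quasi-median.

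Next, since median graphs coincide with triangle-free quasi-median graphs, it only remains to verify that $\mathrm{Cayl}(C(\Gamma), \Gamma)$ contains no triangle. For this I would exhibit a bipartition. Every defining relator of $C(\Gamma)$---namely $u^2$ for a vertex $u$, and the commutator $uvu^{-1}v^{-1}$ for an edge $\{u,v\}$---has even length; hence the assignment sending every generator to the non-trivial element of $\mathbb{Z}_2$ extends to a well-defined homomorphism $C(\Gamma) \to \mathbb{Z}_2$. This homomorphism two-colours the vertices of the Cayley graph so that adjacent vertices receive different colours, so the graph is bipartite, and in particular triangle-free. Together with the previous paragraph, this shows that $\mathrm{Cayl}(C(\Gamma), \Gamma)$ is median.

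There is no genuine obstacle here: the statement is an essentially immediate corollary of Theorem~\ref{thm:QM}. The only points requiring a moment of care are the correct identification of $\mathrm{QM}(\Gamma, \mathcal{G})$ with $\mathrm{Cayl}(C(\Gamma), \Gamma)$ (each order-two vertex-group contributes exactly one generator, so that no multi-edges or loops arise) and the elementary observation that the word-length-parity map is well-defined, which is precisely what rules out triangles.
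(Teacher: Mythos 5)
Your proposal is correct and follows exactly the route the paper intends: specialise Theorem~\ref{thm:QM} to $\mathbb{Z}_2$ vertex-groups and invoke the fact that median graphs are the triangle-free quasi-median graphs. The paper leaves the triangle-freeness implicit, whereas you justify it via the parity homomorphism $C(\Gamma) \to \mathbb{Z}_2$; this is a valid (and welcome) filling-in of the same argument rather than a different approach.
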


\noindent
\emph{Hyperplanes} are the key objects in order to understand the geometry of (quasi-)median graphs. They are equivalence classes of edges with respect to the reflexive-transitive closure of the relation that identifies two edges whenever they belong to a common $3$-cycle or whenever they are opposite edges in a $4$-cycle. See Figure~\ref{Hyp}. Hyperplanes in quasi-median graphs of graph products are described by \cite[Lemma~8.9 and Corollary~8.10]{QM}:
\begin{figure}
\begin{center}
\includegraphics[trim= 0 16.5cm 10cm 0, clip, width=0.5\linewidth]{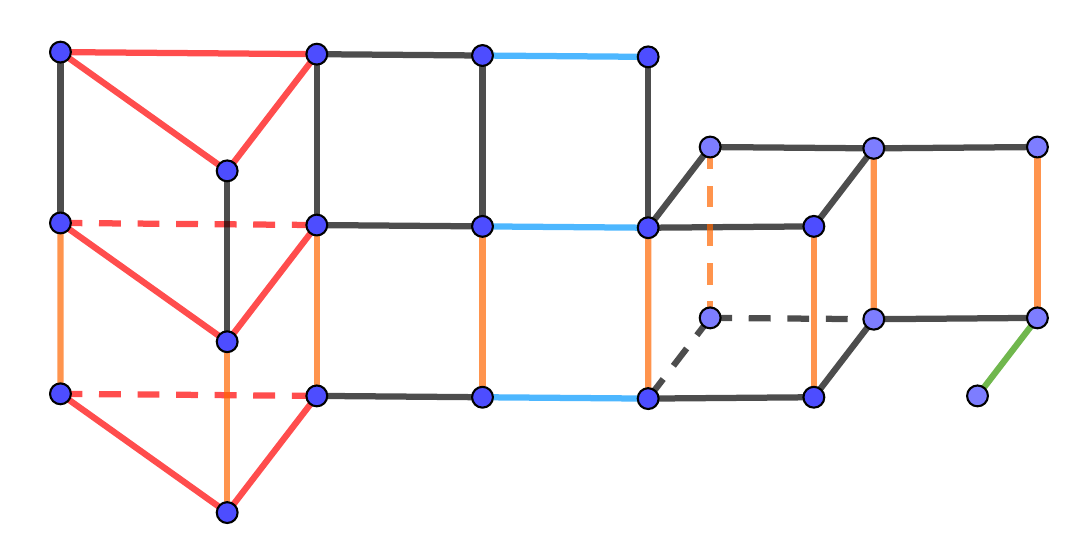}
\caption{Some hyperplanes in a quasi-median graph.}
\label{Hyp}
\end{center}
\end{figure}

\begin{lemma}\label{lem:QMhyp}
Let $\Gamma$ be a graph and $\mathcal{G}$ a collection of groups indexed by $\Gamma$. Fix a vertex $u \in \Gamma$ and let $J_u$ denote the hyperplane of $\mathrm{QM}(\Gamma, \mathcal{G})$ containing the edges of the clique $\langle u \rangle$. An edge $e$ belongs to $J_u$ if and only if $e = \{ g,g \ell\}$ for some $g \in \langle \mathrm{link}(u) \rangle$ and $\ell \in \langle u \rangle \backslash \{1\}$.  As a consequence, the stabiliser of $J_u$ in $\Gamma \mathcal{G}$ is $\langle \mathrm{star}(u) \rangle$. 
\end{lemma}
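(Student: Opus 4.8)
The plan is to prove Lemma~\ref{lem:QMhyp} by unwinding the definition of a hyperplane in $\QM$ as an equivalence class of edges, and then identifying exactly which edges are equivalent to the edges of the clique $\langle u \rangle$ sitting at the identity vertex. First I would set up notation: let $e_0 = \{1, \ell_0\}$ be an edge of the clique $\langle u \rangle$ (so $\ell_0 \in \langle u \rangle \setminus \{1\}$), and recall that by Theorem~\ref{thm:QM} the graph $\QM$ is quasi-median, with cliques corresponding to cosets $g \langle v \rangle$ of vertex-groups (each such coset spans a complete subgraph since $\bigcup_{G} G \setminus \{1\}$ is the generating set). The hyperplane $J_u$ is the reflexive-transitive closure of $e_0$ under the relation identifying two edges that lie in a common triangle ($3$-cycle) or are opposite in a $4$-cycle.

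The key structural step is to describe the two elementary moves concretely in terms of the group. For the triangle move: two edges of $\QM$ lie in a common triangle if and only if they lie in a common clique, i.e.\ a common coset $g\langle v \rangle$; within such a clique every pair of edges is equivalent, so passing through triangles lets one move freely among all edges of a single clique but never changes the clique's vertex-label $v$ or its associated coset. For the square move: I would argue that a $4$-cycle in $\QM$ arises precisely from the shuffling relation, i.e.\ it has the form $g, g\ell, g\ell m, gm$ with $\ell \in \langle v \rangle\setminus\{1\}$, $m \in \langle w \rangle \setminus\{1\}$, and $v,w$ adjacent in $\Gamma$ (so that $\ell$ and $m$ commute); the opposite edges $\{g, g\ell\}$ and $\{gm, gm\ell\}$ then belong to parallel cliques with the \emph{same} label $v$, differing by right-translation by an element $m$ of an adjacent vertex-group. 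Combining the two moves, an edge is in $J_u$ if and only if it is labelled by $u$ (i.e.\ lies in a coset $g\langle u \rangle$) and is reachable from $e_0$ by successively right-multiplying by generators taken from vertex-groups adjacent to $u$, i.e.\ from $\langle \mathrm{link}(u) \rangle$. This gives exactly the claimed description: $e \in J_u$ iff $e = \{g, g\ell\}$ with $g \in \langle \mathrm{link}(u) \rangle$ and $\ell \in \langle u \rangle \setminus \{1\}$.

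To pin this down rigorously I would use the normal-form/graphically-reduced-word technology recalled in Section~\ref{section:GP}: the forward inclusion (every edge reached by the moves has the stated form) is immediate from the two move descriptions since $\langle \mathrm{link}(u)\rangle$ is closed under the relevant right-multiplications, and the reverse inclusion (every edge of that form is genuinely equivalent to $e_0$) is proved by inducting on the graphically reduced length of $g \in \langle \mathrm{link}(u)\rangle$, peeling off one syllable at a time and using a square move to transport the $u$-edge across it. The main obstacle — and the only place where real care is needed — is verifying that \emph{no other} edges enter the class, i.e.\ that the moves cannot change the label $u$ of the clique nor push $g$ outside $\langle \mathrm{link}(u)\rangle$; this requires checking that every triangle and every square in $\QM$ has the shape asserted above, which is where I would either cite the explicit description of cliques and squares in quasi-median Cayley graphs from \cite{QM} or give a short direct argument via graphically reduced words.

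Finally, the stabiliser computation follows formally. Since $\Gamma \mathcal{G}$ acts on $\QM$ by left multiplication and this action permutes edges preserving the hyperplane relation, the stabiliser of $J_u$ consists of those $h$ with $h J_u = J_u$. Using the edge description, $h$ stabilises $J_u$ iff left-multiplication by $h$ carries the set $\{\{g, g\ell\} : g \in \langle \mathrm{link}(u)\rangle, \ell \in \langle u \rangle \setminus\{1\}\}$ to itself; since these edges all lie in cosets of $\langle u \rangle$ based at elements of $\langle\mathrm{link}(u)\rangle$, this holds exactly when $h \in \langle \mathrm{star}(u)\rangle = \langle u \rangle \cdot \langle \mathrm{link}(u)\rangle$, giving the asserted stabiliser $\langle \mathrm{star}(u)\rangle$. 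I would keep this last part brief, as it is a direct consequence of the edge description combined with the fact that $\langle \mathrm{star}(u)\rangle$ preserves both the label $u$ and the carrier $\langle \mathrm{link}(u)\rangle$.
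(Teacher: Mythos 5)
The paper offers no proof of this lemma at all: it is quoted directly from \cite{QM} (Lemma~8.9 and Corollary~8.10 there), so there is no in-paper argument to compare against. Your proposal reconstructs, correctly, essentially the argument of the cited source: triangles of $\mathrm{QM}(\Gamma,\mathcal{G})$ lie in single cosets of vertex-groups (because, by the normal form, a product of two non-trivial syllables from distinct vertex-groups is never itself a syllable), the only $4$-cycles not contained in a single clique are the commutation squares $g,\, g\ell,\, g\ell m = gm\ell,\, gm$ with $\ell, m$ in adjacent vertex-groups, and the two elementary moves therefore generate exactly the edges of the cliques $g\langle u\rangle$ with $g \in \langle\mathrm{link}(u)\rangle$, the reverse inclusion following by induction on the syllable length of $g$; the stabiliser computation is then formal from $\langle\mathrm{star}(u)\rangle = \langle u\rangle \times \langle\mathrm{link}(u)\rangle$. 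Two points should be tightened when writing this up. First, your claim that a $4$-cycle ``arises precisely from the shuffling relation'' overlooks $4$-cycles contained in a single clique (possible when a vertex-group has order at least $4$); these are harmless, since their opposite edges are already triangle-equivalent, but the case analysis is incomplete without ruling out, via reduced words, every other configuration (e.g.\ three vertices in one clique and one outside). Second, your own triangle-move observation shows that $J_u$ consists of \emph{all} edges of the cliques $g\langle u\rangle$ with $g \in \langle\mathrm{link}(u)\rangle$; when $\langle u\rangle$ has order greater than $2$, such an edge $\{g\ell_1, g\ell_2\}$ with $\ell_1,\ell_2 \neq 1$ has no endpoint in $\langle\mathrm{link}(u)\rangle$, so the literal form ``$e = \{g, g\ell\}$ with $g \in \langle\mathrm{link}(u)\rangle$'' is too restrictive -- the statement your argument actually proves is ``$e$ is an edge of $g\langle u\rangle$ for some $g \in \langle\mathrm{link}(u)\rangle$''. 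This is an imprecision already present in the lemma as stated (and immaterial for the paper's applications, where vertex-groups have order two), but your write-up should match what the moves genuinely produce.
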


\noindent
As a consequence of our lemma, all the edges of a given hyperplane of $\mathrm{QM}(\Gamma, \mathcal{G})$ are naturally labelled by the same vertex of $\Gamma$. This labelling may quite useful in practice. For instance:

\begin{lemma}\label{lem:AdjLabels}
Let $\Gamma$ be a graph and $\mathcal{G}$ a collection of groups indexed by $\Gamma$. Two transverse hyperplanes in $\mathrm{QM}(\Gamma, \mathcal{G})$ must be labelled by adjacent vertices of $\Gamma$. 
\end{lemma}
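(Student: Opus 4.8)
The statement to prove is Lemma~\ref{lem:AdjLabels}: two transverse hyperplanes in $\mathrm{QM}(\Gamma, \mathcal{G})$ must be labelled by adjacent vertices of $\Gamma$. Recall that two hyperplanes are \emph{transverse} precisely when they meet in a $4$-cycle (a square), i.e.\ there exist four vertices $g, g\ell_1, g\ell_1\ell_2, g\ell_2$ forming a $4$-cycle such that the two ``horizontal'' edges belong to the first hyperplane and the two ``vertical'' edges belong to the second. My strategy is to extract, from such a square, the combinatorial relation between the two vertex-labels $u, v \in \Gamma$ of the hyperplanes, and to show that the existence of the square forces $u$ and $v$ to be adjacent in $\Gamma$.

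\textbf{Key steps.} First I would invoke Lemma~\ref{lem:QMhyp} to pin down the labels: each hyperplane $J_u$ (resp.\ $J_v$) carries a well-defined label $u$ (resp.\ $v$), and its edges are exactly those of the form $\{h, h\ell\}$ with $h \in \langle \mathrm{link}(u)\rangle$ and $\ell \in \langle u \rangle \setminus \{1\}$ (resp.\ for $v$). Next, from a square witnessing transversality, I would write its two $u$-labelled edges as $\{g, g\ell\}$ and $\{g\ell', g\ell'\ell''\}$ with $\ell, \ell' , \ell''$ syllables in $\langle u \rangle$, and its two $v$-labelled edges as the remaining pair, with syllables in $\langle v \rangle$. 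The $4$-cycle relation means that the element obtained by reading around the square is trivial, so that (after translating by $g^{-1}$) we obtain an equation of the form $\ell_u \, \ell_v = \ell_v' \, \ell_u'$ in $\Gamma\mathcal{G}$, where $\ell_u, \ell_u' \in \langle u\rangle\setminus\{1\}$ and $\ell_v, \ell_v' \in \langle v \rangle \setminus\{1\}$. The crux is then to analyse this equation using the normal-form theory for graph products recalled in Section~\ref{section:GP}: a word of length two with syllables in distinct vertex-groups $\langle u\rangle$ and $\langle v\rangle$ is graphically reduced unless $u$ and $v$ are adjacent (only then does the shuffling move apply). Since the two sides give two length-$\le 2$ expressions for the same element and must represent it, uniqueness of the graphically reduced form up to shuffling forces the syllables to be reorderable, which happens exactly when $u$ and $v$ are adjacent.

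\textbf{The main obstacle.} The delicate point is ruling out the degenerate possibility $u = v$ and carefully handling the case analysis on whether the four corners of the square are genuinely distinct; I would need to confirm that a square (a $4$-cycle that is not a triangle and whose opposite edges carry the two given hyperplanes) really does yield two syllables in \emph{different} vertex-groups, rather than collapsing. If $u = v$ the two hyperplanes would coincide rather than being transverse, so that case is excluded at the outset. Concretely, the argument reduces to the standard fact that in $\Gamma\mathcal{G}$ the subgroup $\langle u, v\rangle$ generated by two non-adjacent vertex-groups splits as the free product $\langle u \rangle \ast \langle v\rangle$ (used already in the proof of Lemma~\ref{lem:WhenVirtuallyZ}), in which no nontrivial relation of the form $\ell_u \ell_v = \ell_v' \ell_u'$ with all four syllables nontrivial can hold; hence a square forces adjacency. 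Assembling these observations completes the proof.
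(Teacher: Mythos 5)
Your argument is correct. Note that the paper itself does not prove Lemma~\ref{lem:AdjLabels}: it is stated as a consequence of the labelling of hyperplane edges coming from Lemma~\ref{lem:QMhyp} and the quasi-median machinery of \cite{QM}, so there is no in-paper proof to compare against; your proposal supplies the natural direct argument. Reading around a $4$-cycle $g,\ gx,\ gxy,\ gxyz$ covered by the two hyperplanes gives a relation $xyzw=1$ with $x,z\in\langle u\rangle\setminus\{1\}$ and $y,w\in\langle v\rangle\setminus\{1\}$, and if $u\neq v$ were non-adjacent this would be a non-trivial alternating word in $\langle u\rangle\ast\langle v\rangle$, a contradiction; this is exactly your reduction to the free-product splitting already used in the proof of Lemma~\ref{lem:WhenVirtuallyZ}. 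The only point you assert rather than prove is that $u=v$ is impossible: the justification is that if all four syllables lay in the same vertex-group $\langle u\rangle$, the four corners would all lie in the single coset $g\langle u\rangle$, which spans a complete subgraph of $\mathrm{QM}(\Gamma,\mathcal{G})$, so consecutive edges of the square would lie in common $3$-cycles and all four edges would belong to one hyperplane, contradicting the assumption that the two hyperplanes are distinct. With that one sentence added, the proof is complete.
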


\noindent
In order to state our last preliminary lemma, we need to introduce a couple of preliminary definitions.

\begin{definition}
Let $G$ be a group acting on a quasi-median graph $X$. The \emph{rotative-stabiliser} $\mathrm{stab}_\circlearrowright(J)$ of a hyperplane $J$ is the subgroup of $\mathrm{stab}(J)$ that stabilises each maximal complete subgraph of $J$. 
\end{definition}

\noindent
In the sequel, given a group $G$ acting on a median graph $X$ and a collection of hyperplanes $\mathcal{J}$, we denote by $\mathrm{Rot}(\mathcal{J})$ the subgroup $\langle \mathrm{stab}_\circlearrowright(J) \mid J \in \mathcal{J} \rangle$ of $G$. 

\begin{definition}
Let $X$ be a quasi-median graph and $\mathcal{J}$ a collection of hyperplanes. The \emph{crossing graph} of $\mathcal{J}$ is the graph whose vertices are the hyperplanes of $\mathcal{J}$ and whose edges connect two hyperplanes whenever they are transverse. 
\end{definition}

\noindent
Recall that two hyperplanes are \emph{transverse} whenever they cover some $4$-cycle. We also say that a hyperplane $J$ is \emph{tangent} to a subgraph $Y$ whenever $J$ contains an edge not in $Y$ but with a vertex in $Y$. 

\begin{prop}\label{prop:PingPong}
Let $G$ be a group acting on a quasi-median graph $X$. Fix a gated subgraph $Y \subset X$ and let $\mathcal{J}$ denote the collection of the hyperplanes of $X$ tangent to $Y$. Assume that the following conditions holds:
\begin{itemize}
	\item for every $J \in \mathcal{J}$, $\mathrm{stab}_\circlearrowright(J)$ acts vertex-freely on $X$;
	\item for all transverse $J,H \in \mathcal{J}$, every element of $\mathrm{stab}_\circlearrowright(J)$ commutes with every element of $\mathrm{stab}_\circlearrowright(H)$.
\end{itemize}
Let $\Delta$ denote the crossing graph of $\mathcal{J}$ and let $\mathcal{G} = \{ \mathrm{stab}_\circlearrowright(J) \mid J \in \mathcal{J}\}$. Then
$$\langle \mathrm{Rot}(\mathcal{J}), \mathrm{stab}(Y) \rangle = \mathrm{Rot}(\mathcal{J}) \rtimes \mathrm{stab}(Y)$$
and the map $\Delta \mathcal{G} \to \mathrm{Rot}(\mathcal{J})$ that restricts on each vertex-group $\mathrm{stab}_\circlearrowright(J)$ to the identity is an isomorphism. 
\end{prop}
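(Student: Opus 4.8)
The plan is to split the statement into two parts tied together by a single ping-pong lemma: the semidirect-product structure, and the identification of $\mathrm{Rot}(\mathcal{J})$ with the graph product $\Delta\mathcal{G}$. First I would record that the defining relations of $\Delta\mathcal{G}$ hold in $\mathrm{Rot}(\mathcal{J})$: the vertex-groups of $\Delta\mathcal{G}$ are the rotative-stabilisers $\mathrm{stab}_\circlearrowright(J)$, the edges of the crossing graph $\Delta$ join transverse hyperplanes, and the second hypothesis says precisely that the rotative-stabilisers of transverse hyperplanes commute elementwise. Hence the assignment $\mathrm{stab}_\circlearrowright(J)\ni g\mapsto g$ extends, by the universal property of graph products, to a surjective homomorphism $\phi\colon \Delta\mathcal{G}\twoheadrightarrow \mathrm{Rot}(\mathcal{J})$. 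For the semidirect structure I would observe that $\mathrm{stab}(Y)$ permutes the hyperplanes tangent to $Y$, so for $g\in\mathrm{stab}(Y)$ conjugation carries $\mathrm{stab}_\circlearrowright(J)$ to $\mathrm{stab}_\circlearrowright(gJ)$ with $gJ\in\mathcal{J}$; thus $\mathrm{stab}(Y)$ normalises $\mathrm{Rot}(\mathcal{J})$ and $\langle \mathrm{Rot}(\mathcal{J}),\mathrm{stab}(Y)\rangle = \mathrm{Rot}(\mathcal{J})\cdot\mathrm{stab}(Y)$.

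Everything then reduces to one geometric statement, which I will call the \emph{Core Lemma}: if $w=g_1\cdots g_n$ is a graphically reduced nontrivial word in $\Delta\mathcal{G}$, with $g_i\in \mathrm{stab}_\circlearrowright(J_i)\setminus\{1\}$, then some hyperplane of $X$ separates $Y$ from $\phi(w)Y$; in particular $\phi(w)Y\neq Y$. This single lemma yields simultaneously the injectivity of $\phi$ (a nontrivial reduced word moves $Y$, hence acts nontrivially on $X$) and the trivial intersection $\mathrm{Rot}(\mathcal{J})\cap\mathrm{stab}(Y)=\{1\}$ (a nontrivial element of $\mathrm{Rot}(\mathcal{J})$ does not fix $Y$), which together with the normality above gives the internal semidirect decomposition. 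Here I use that $Y$ is gated, so each $J\in\mathcal{J}$, being tangent to but not crossing $Y$, leaves $Y$ inside a single sector, and separation of $Y$ from $\phi(w)Y$ is detected via the gate projection onto $Y$.

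To prove the Core Lemma I would argue by induction on $n$, peeling off an initial syllable. For $n=1$: since $J_1$ is tangent to $Y$, some vertex $q\in Y$ is incident to a clique $Q$ dual to $J_1$; as $g_1$ fixes $Q$ setwise and acts vertex-freely (first hypothesis), $g_1 q\in Q\setminus\{q\}$ lies in a sector of $J_1$ distinct from the one containing $Y$, so $J_1$ separates $Y$ from $g_1 Y$. For the inductive step, write $w=g_1 v$ with $v=g_2\cdots g_n$, so that $J_1$ is the hyperplane of an initial syllable of the reduced word $w$. The key reformulation is that graphical reducedness of $w$ translates into the geometric statement that $vY$ lies in the home sector $S$ of $J_1$ (the sector containing $Y$). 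Granting this, the rotation $g_1$ sends $vY$ into $g_1 S$, which is disjoint from $S$ by the $n=1$ analysis, so $J_1$ separates $Y$ from $wY=g_1 vY$, closing the induction.

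The main obstacle is establishing this reformulation in the general quasi-median setting, i.e.\ proving that a graphically reduced word cannot push $Y$ back across the hyperplane $J_1$ of an initial syllable. In the model graph $\mathrm{QM}(\Delta,\mathcal{G})$ with $Y$ a single vertex this is transparent, since crossing $J_1$ amounts to a syllable that can be shuffled to the front, which reducedness forbids; but abstractly it requires the interval and gate-projection calculus of quasi-median graphs from \cite{QM}. Concretely, writing $\mathcal{H}(w)$ for the set of hyperplanes separating the gated subgraphs $Y$ and $\phi(w)Y$, the dichotomy between syllables whose hyperplane is transverse to $J_1$ (these are the edges of $\Delta$, hence commuting generators, and their rotations fix each sector of $J_1$ setwise) and those whose hyperplane is disjoint from $J_1$ governs how $\mathcal{H}$ evolves; reducedness ensures that $J_1$ enters $\mathcal{H}(w)$ and is never subsequently removed. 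Once the Core Lemma is in hand, the injectivity of $\phi$, the triviality of $\mathrm{Rot}(\mathcal{J})\cap\mathrm{stab}(Y)$, and hence the full statement follow formally.
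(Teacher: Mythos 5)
The paper does not actually prove this proposition: it defers entirely to the Ping-Pong Lemma \cite[Proposition~8.44]{QM} (alternatively \cite[Theorem~10.54]{QM}), and your proposal is essentially a reconstruction of that very ping-pong argument, so the two approaches coincide. Your skeleton is the right one (surjectivity via the universal property of graph products, $\mathrm{stab}(Y)$ normalising $\mathrm{Rot}(\mathcal{J})$ since it permutes $\mathcal{J}$, and a single separation lemma for reduced words yielding both injectivity and $\mathrm{Rot}(\mathcal{J})\cap\mathrm{stab}(Y)=\{1\}$), and the one step you leave at the level of a sketch --- that a graphically reduced word never recrosses the hyperplane of an initial syllable, which requires tracking the translated hyperplanes $g_1\cdots g_{i-1}J_i$ and the fact that rotative stabilisers of transverse hyperplanes preserve each other's sectors --- is precisely the content of the cited lemma.
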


\noindent
The proposition is a rather straightforward consequence of the Ping-Pong Lemma \cite[Proposition~8.44]{QM}. It is also a consequence of \cite[Theorem~10.54]{QM}. In order to keep the paper short, we refer the reader to \cite{QM} for more details and we only mention that Proposition~\ref{prop:PingPong} applies to subgroups $G \leq \Gamma \mathcal{G}$ of graph products $\Gamma \mathcal{G}$, to quasi-median graphs $X= \mathrm{QM}(\Gamma, \mathcal{G})$, and to subgraphs of the form $\langle \Lambda \rangle$ where $\Lambda \subset \Gamma$ as justified by \cite[Lemma~8.46]{QM} and \cite[Corollary~6.6]{Mediangle}.

\subsection{Lampraag over $\mathbb{Z}$}\label{section:Lampraag}

\noindent
In this section, we introduce a family of groups we call \emph{lampraags}, in analogy with lamplighter groups. They are particular examples of graph-wreath products \cite{MR3393469}, and more generally of halo products \cite{Halo}. The lampraag over $\mathbb{Z}$ will provide a convenient model for $\mathrm{FB}_7$ up to commensurability.

\begin{definition}
Let $G$ be a group and $S \subset G$ a subset. The \emph{lampraag over $(G,S)$} is the semidirect product 
$$A( \mathrm{Cayl}(G,S) ) \rtimes G$$
where $G$ permutes the generators of $A(\mathrm{Cayl}(G,S))$ according to its action by left-multiplication on $\mathrm{Cayl}(G,S)$. 
\end{definition}

\noindent
In the definition, we do not require $S$ to be a generating set. For instance, taking $S= \emptyset$ is allowed, in which case $A(\mathrm{Cayl}(G,\emptyset)) \rtimes G \simeq \mathbb{Z} \ast G$. In practice, however, we will be mainly interested in the case where $\mathrm{Cayl}(G,S)$ is connected, i.e.\ when $S$ is a generating~set. In fact, in the sequel, we will focus on the lampraag over $\mathbb{Z}$, where $\mathbb{Z}$ is endowed with its canonical generating set $\{1\}$. For convenience, we will denote by $\mathbb{L}$ the Cayley graph of $\mathbb{Z}$ with respect to $\{1\}$, i.e.\ the bi-infinite line; and by $A(\mathbb{L}) \rtimes \mathbb{Z}$ the corresponding lampraag. Notice that:

\begin{fact}\label{fact:LampRaagPresentation}
The lampraag $A(\mathbb{L}) \rtimes \mathbb{Z}$ admits $\langle a,t \mid [a,tat^{-1}]=1 \rangle$ as a presentation. 
\end{fact}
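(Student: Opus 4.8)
The plan is to compute a presentation of the semidirect product $A(\mathbb{L}) \rtimes \mathbb{Z}$ directly from its definition and then to simplify it by Tietze transformations. Since $\mathbb{L}$ is the bi-infinite line, I would index its vertices by $\mathbb{Z}$ and write $a_i$ for the standard generator of $A(\mathbb{L})$ attached to the vertex $i$; the edges of $\mathbb{L}$ join $i$ to $i+1$, so $A(\mathbb{L}) = \langle a_i \ (i \in \mathbb{Z}) \mid [a_i, a_{i+1}] = 1 \ (i \in \mathbb{Z}) \rangle$. The generator $t$ of $\mathbb{Z}$ acts by left multiplication on $\mathrm{Cayl}(\mathbb{Z}, \{1\})$, that is by the shift $i \mapsto i+1$, so it permutes the generators according to $t a_i t^{-1} = a_{i+1}$. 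As $\mathbb{Z}$ is free on $t$, the standard presentation of a semidirect product then gives
$$A(\mathbb{L}) \rtimes \mathbb{Z} = \langle a_i \ (i \in \mathbb{Z}), t \mid [a_i, a_{i+1}] = 1, \ t a_i t^{-1} = a_{i+1} \ (i \in \mathbb{Z}) \rangle.$$

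Next I would eliminate all the generators $a_i$ with $i \neq 0$. From the conjugation relations one proves by induction, in both directions, that $a_i = t^i a_0 t^{-i}$ for every $i \in \mathbb{Z}$, so each such $a_i$ can be removed by a Tietze transformation and replaced everywhere by $t^i a_0 t^{-i}$. The bookkeeping here is the only delicate point and is the step I expect to require the most care, since there are infinitely many generators and relations and I must verify that every conjugation relation is consumed exactly once. Concretely, for $i \neq -1$ the relation $t a_i t^{-1} = a_{i+1}$ is precisely the one used to eliminate $a_{i+1}$, while the lone surviving relation $t a_{-1} t^{-1} = a_0$ becomes $t(t^{-1} a_0 t)t^{-1} = a_0$, a tautology, after substitution; hence all conjugation relations disappear.

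It then remains to track the commutation relations. After substitution, $[a_i, a_{i+1}] = 1$ becomes $[t^i a_0 t^{-i}, t^{i+1} a_0 t^{-(i+1)}] = 1$, which is the conjugate $t^i [a_0, t a_0 t^{-1}] t^{-i} = 1$ of the single relation $[a_0, t a_0 t^{-1}] = 1$. Thus every commutation relation is a consequence of the one indexed by $i = 0$, and all the others may be discarded. Renaming $a_0$ as $a$ yields the presentation $\langle a, t \mid [a, t a t^{-1}] = 1 \rangle$, as claimed. As an alternative that sidesteps the infinite Tietze bookkeeping, I could instead exhibit the two homomorphisms $a \mapsto a_0,\ t \mapsto t$ and $a_i \mapsto t^i a t^{-i},\ t \mapsto t$, check in each direction that the defining relations are preserved, and verify that they are mutually inverse; I would fall back on this should the elimination argument become unwieldy.
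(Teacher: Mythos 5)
Your proposal is correct and follows exactly the paper's route: write down the obvious infinite presentation of the semidirect product, then use $a_i = t^i a_0 t^{-i}$ to eliminate all generators but $a_0$ and $t$, observing that every commutation relation is a conjugate of the single relation $[a_0, t a_0 t^{-1}]=1$. Your version simply spells out the Tietze bookkeeping that the paper leaves implicit.
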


\begin{proof}
It is clear that
$$\langle \ldots, a_{-1}, a_0, a_1, \ldots, t \mid ta_it^{-1}= a_{i+1} \text{ and } [a_i,a_{i+1}]=1 \text{ for every } i \in \mathbb{Z} \rangle$$
is a presentation of our lampraag. Since $a_i = t^ia_0t^{-i}$ for every $i \in \mathbb{Z}$, this infinite presentation can be simplified into the finite presentation given above. 
\end{proof}

\noindent
Interestingly, the lampraag over $\mathbb{Z}$ turns out to be connected to many other families of groups. For instance:
\begin{itemize}
	\item As a consequence of Fact~\ref{fact:LampRaagPresentation}, this is a one-relator group. One-relator groups have been extensively studied in combinatorial group theory.
	\item As already mention, this is an example of graph-wreath product \cite{MR3393469}, and more generally of halo product \cite{Halo}.
	\item As we will see in Section~\ref{section:Flip}, this is the fundamental group of a compact flip $3$-manifold with boundary.
	\item Its finite-index subgroup $A(\mathbb{L}) \rtimes 2\mathbb{Z}$, which admits $\langle a,b,t \mid [a,b]=[a,tbt^{-1}]=1 \rangle$ as a presentation, is a \emph{diagram group} \cite[Example~5.43]{MR4071367}. Interestingly, it is also proved in \cite{MR4071367} that $A(\mathbb{L}) \rtimes 2\mathbb{Z}$ is not a right-angled Artin group. 
	\item The lampraag $A(\mathbb{L}) \rtimes \mathbb{Z}$ is the fundamental group of a \emph{Cartesian graph of groups} \cite[Example~11.38]{QM}.
	\item It is not difficult to deduce from the description of centralisers in right-angled Artin groups (see Proposition~\ref{prop:CentralisersGP} below) that the lampraag $A(\mathbb{L}) \rtimes \mathbb{Z}$ cannot be a subgroup of a right-angled Artin group. However, its finite-index subgroup $A(\mathbb{L}) \rtimes 2 \mathbb{Z}$ is a simple subgroup of $A(P_3)$. Indeed, if we denote by $p,q,r,s$ the four vertices successively met along $P_3$, then the subgroup $\langle q,r,sp \rangle$ turns out to be isomorphic to $A(\mathbb{L}) \rtimes 2 \mathbb{Z}$.  
\end{itemize}

\noindent
The rest of the section is dedicated to the proof of the following statement, which shows that the lampraag $A(\mathbb{L}) \rtimes \mathbb{Z}$ can be thought of as a good model of $\mathrm{FB}_7$ up to commensurability.

\begin{prop}\label{prop:CommCartesianGG}
The group $\mathrm{FB}_7$ is commensurable to the lampraag $A(\mathbb{L}) \rtimes \mathbb{Z}$. 
\end{prop}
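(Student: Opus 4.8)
The plan is to realise a finite-index subgroup of $\mathrm{FB}_7$ as the desired lampraag by exploiting the median geometry of Section~\ref{section:QM}. Thinking of $\mathrm{FB}_7$ as $C(P_5^{\mathrm{opp}})$, Corollary~\ref{cor:RACGmedian} gives an action on the median graph $X = \mathrm{Cayl}(C(P_5^{\mathrm{opp}}), P_5^{\mathrm{opp}})$, and the goal is to apply the Ping-Pong Proposition~\ref{prop:PingPong} to a well-chosen gated subgraph $Y$ so that $\langle \mathrm{Rot}(\mathcal{J}), \mathrm{stab}(Y) \rangle = \mathrm{Rot}(\mathcal{J}) \rtimes \mathrm{stab}(Y)$ takes the shape of a lampraag over $\mathbb{Z}$. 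The natural candidate for the ``shift'' is the infinite-order element $t = \sigma_3 \sigma_4$ (the vertices $3,4$ are non-adjacent in $P_5^{\mathrm{opp}}$), and I would take $Y$ to be the combinatorial convex hull of the axis of $t$ enlarged to absorb the commuting directions $\sigma_1, \sigma_6$; note that by Proposition~\ref{prop:CentralisersGP} the centraliser of $t$ is exactly $\langle \sigma_3\sigma_4 \rangle \times \langle \sigma_1, \sigma_6 \rangle$, so that $\mathrm{stab}(Y)$ will be $\langle \sigma_1, \sigma_3, \sigma_4, \sigma_6 \rangle \cong (\mathbb{Z}_2 \times \mathbb{Z}_2) \times \mathbb{D}_\infty$.

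The heart of the computation is the crossing graph of the hyperplanes $\mathcal{J}$ tangent to $Y$. Using the labelling from Lemma~\ref{lem:QMhyp} and the adjacency constraint of Lemma~\ref{lem:AdjLabels}, I would sort the tangent hyperplanes by their labels: the labels commuting with both $\sigma_3$ and $\sigma_4$, namely $\sigma_1$ and $\sigma_6$, yield hyperplanes running parallel to $Y$, which are folded into $\mathrm{stab}(Y)$; while the labels commuting with exactly one of $\sigma_3, \sigma_4$, namely $\sigma_5$ (commuting with $\sigma_3$) and $\sigma_2$ (commuting with $\sigma_4$), yield a $\langle t \rangle$-periodic family. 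Since $\sigma_2$ and $\sigma_5$ commute ($|2-5| \geq 2$), consecutive members of this family are transverse, whereas two hyperplanes sharing a label are never transverse by Lemma~\ref{lem:AdjLabels}; hence the crossing graph is exactly the bi-infinite line $\mathbb{L}$. Proposition~\ref{prop:PingPong} then identifies $\mathrm{Rot}(\mathcal{J})$ with the graph product over $\mathbb{L}$ of the rotative-stabilisers, and since $\langle \mathrm{Rot}(\mathcal{J}), \mathrm{stab}(Y) \rangle$ visibly contains all six generators $\sigma_1, \ldots, \sigma_6$, this split extension is in fact all of $\mathrm{FB}_7$, giving an internal decomposition $\mathrm{FB}_7 = \mathrm{Rot}(\mathcal{J}) \rtimes \big( (\mathbb{Z}_2 \times \mathbb{Z}_2) \times \mathbb{D}_\infty \big)$.

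Finally, I would pass to finite-index subgroups to reach the torsion-free lampraag. The quotient $(\mathbb{Z}_2 \times \mathbb{Z}_2) \times \mathbb{D}_\infty$ contains $\langle \sigma_3\sigma_4 \rangle \cong \mathbb{Z}$ with index $8$, and its preimage is an index-$8$ normal subgroup of $\mathrm{FB}_7$ of the form $\mathrm{Rot}(\mathcal{J}) \rtimes \mathbb{Z}$; since $\mathrm{FB}_7$ is a right-angled Coxeter group it is virtually torsion-free, so after a further bounded refinement the $\mathbb{Z}_2$-lamps are replaced by the infinite-cyclic glide elements translating along the corresponding hyperplanes, turning $\mathrm{Rot}(\mathcal{J})$ into the right-angled Artin group $A(\mathbb{L})$ and the shift into translation by $2$. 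Matching the resulting presentation against Fact~\ref{fact:LampRaagPresentation} identifies this subgroup with the index-$2$ subgroup $A(\mathbb{L}) \rtimes 2\mathbb{Z}$ of the lampraag; being simultaneously finite-index in $\mathrm{FB}_7$ and in $A(\mathbb{L}) \rtimes \mathbb{Z}$, it witnesses the desired commensurability. I expect the main obstacle to be precisely this last bookkeeping step: rigorously ruling out spurious transversalities so that the crossing graph is genuinely $\mathbb{L}$ (rather than $\mathbb{L}$ decorated by the parallel hyperplanes $J_{\sigma_1}, J_{\sigma_6}$), and then matching the Coxeter-flavoured output $\mathrm{Rot}(\mathcal{J})$ (with its $\mathbb{Z}_2$ rotative-stabilisers) against the torsion-free $A(\mathbb{L})$ with infinite-cyclic generators, which requires choosing the torsion-free finite-index subgroup compatibly with the $\mathbb{Z}$-action.
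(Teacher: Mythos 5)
Your overall strategy --- run the Ping-Pong Proposition~\ref{prop:PingPong} along the axis of $\sigma_3\sigma_4$ to exhibit a lampraag-shaped splitting --- is the same one the paper uses, but applying it directly to the median graph of $\mathrm{FB}_7$ (rather than first passing to a smaller subgroup) breaks down at two points, and the second one is exactly where your sketch defers to ``bookkeeping''.

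First, the crossing graph of the hyperplanes tangent to $Y=\langle \sigma_1,\sigma_3,\sigma_4,\sigma_6\rangle$ is not the line $\mathbb{L}$. By Lemma~\ref{lem:QMhyp} the stabiliser of $J_{\sigma_2}$ is $\langle \sigma_2,\sigma_4,\sigma_5,\sigma_6\rangle$, which does not contain $\sigma_1$; hence $\sigma_1 J_{\sigma_2}$ is a tangent hyperplane \emph{distinct} from $J_{\sigma_2}$ (it carries the edge $\{\sigma_1,\sigma_1\sigma_2\}$), and likewise $\sigma_6 J_{\sigma_5}\neq J_{\sigma_5}$. So each ``position'' along the axis carries two non-transverse $\sigma_2$-labelled (resp.\ $\sigma_5$-labelled) hyperplanes, and the crossing graph is a doubled line. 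Second, even on a genuine line the rotative-stabilisers here are $\langle \sigma_2\rangle\cong\mathbb{Z}_2$ and its conjugates, so Proposition~\ref{prop:PingPong} returns a right-angled \emph{Coxeter}-type graph product (indeed, $\mathrm{Rot}(\mathcal{J})$ is the normal closure of $\{\sigma_2,\sigma_5\}$, containing infinite dihedral ``lamp'' subgroups such as $\langle \sigma_2,\sigma_1\sigma_2\sigma_1\rangle$), not $A(\mathbb{L})$. Converting this to $A(\mathbb{L})\rtimes 2\mathbb{Z}$ up to finite index is not a routine refinement: the lamp part is infinitely generated, so you cannot simply intersect with a torsion-free finite-index subgroup, and no equivariant commensuration is provided. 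The paper sidesteps both problems by first passing to the index-$4$ normal subgroup $H=\langle \sigma_1\sigma_2,\sigma_3,\sigma_4,\sigma_5\sigma_6\rangle$, identifying it via the Bass--Serre tree dual to the $\sigma_3,\sigma_4$-hyperplanes as a graph product over a path with vertex groups $\mathbb{Z},\mathbb{Z}_2,\mathbb{Z}_2,\mathbb{Z}$, and only then running the ping-pong in the quasi-median graph of \emph{that} graph product: there the lamps $\langle\sigma_1\sigma_2\rangle,\langle\sigma_5\sigma_6\rangle$ are already infinite cyclic, the tangent hyperplanes form a genuine bi-infinite line, and $\mathrm{Rot}(\mathcal{R})\cong A(\mathbb{L})$ falls out directly. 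If you want to salvage your version, you would need to pre-package the involutions into the infinite-order elements $\sigma_1\sigma_2$ and $\sigma_5\sigma_6$ \emph{before} invoking Proposition~\ref{prop:PingPong}, which is precisely the role of the subgroup $H$.
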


\noindent
Our argument allows us to be more explicit. To be precise, the proof of Proposition~\ref{prop:CommCartesianGG} shows that the map
$$\left\{ \begin{array}{ccc} \sigma_1\sigma_2 & \mapsto & a \\ \sigma_3\sigma_4 & \mapsto & t^2 \\ \sigma_5 \sigma_6 & \mapsto & tat^{-1} \end{array} \right.$$
induces an isomorphism from the index-$8$ normal subgroup $\langle \sigma_1\sigma_2, \sigma_3\sigma_4, \sigma_5\sigma_6 \rangle$ of $\mathrm{FB}_7$ to the index-$2$ subgroup $\langle a, tat^{-1}, t^2 \rangle = A(\mathbb{L}) \rtimes 2\mathbb{Z}$ of $A(\mathbb{L}) \rtimes \mathbb{Z}$, when presented as $\langle a,t \mid [a,tat^{-1}]=1 \rangle$ (see Fact~\ref{fact:LampRaagPresentation}). 

\begin{proof}[Proof of Proposition~\ref{prop:CommCartesianGG}.]
As a first step, let us consider the subgroup 
$$H:= \langle \sigma_1\sigma_2, \sigma_3, \sigma_4, \sigma_5 \sigma_6 \rangle$$ 
of $\mathrm{FB}_7$. One easily checks that this is a normal subgroup. Moreover, the quotient $\mathrm{FB}_7/H$ is the product of two cyclic groups of order $2$ generated by the images of $\sigma_1$ and $\sigma_5$. Therefore, $H$ is a normal subgroup of index $4$ in $\mathrm{FB}_7$ with $\{1,\sigma_1,\sigma_5, \sigma_1\sigma_5\}$ as a set of representatives. 

\medskip \noindent
Now, let us investigate the structure of $H$. For this purpose, consider the graph $\mathrm{M}:= \mathrm{Cayl}(\mathrm{FB}_7, \{\sigma_1, \ldots, \sigma_6 \})$. We know from Corollary~\ref{cor:RACGmedian} that this is a median graph. Moreover, it follows from Lemma~\ref{lem:AdjLabels} that two hyperplanes of $M$ labelled by $\sigma_3$ or $\sigma_4$ are never transverse; so the set $\mathcal{J}$ of all the hyperplanes of $\mathrm{M}$ labelled by $\sigma_3$ or $\sigma_4$ yields an arboreal structure on $\mathrm{M}$, i.e.\ the graph $T(\mathcal{J})$ whose vertices are the connected components of the graph $\mathrm{M}\backslash \backslash \mathcal{J}$ (obtained from $\mathrm{M}$ by removing the hyperplanes from $\mathcal{J}$) and whose edges connect two components whenever they are separated by a single hyperplane is a tree. Make $H$ act on $T(\mathcal{J})$. 

\medskip \noindent
Notice that there is a single $H$-orbit of vertices. Indeed, a vertex of $T(\mathcal{J})$ corresponds to a maximal subgraph of $\mathrm{M}$ all of whose edges are labelled by generators distinct from $\sigma_3$ and $\sigma_4$. In other words, the vertices of $T(\mathcal{J})$ correspond to the cosets of $K:=\langle \sigma_1, \sigma_2, \sigma_5, \sigma_6 \rangle$ in $\mathrm{FB}_7$. Since we saw that $1,\sigma_1,\sigma_5, \sigma_1\sigma_5$ are representatives modulo $H$, it suffices to verify that $K$, $\sigma_1K$, $\sigma_5K$, and $\sigma_1\sigma_5 K$ all lie in the same $H$-orbit. But this is clear since:
\begin{itemize}
	\item $\sigma_1K = \sigma_1\sigma_2 K$ with $\sigma_1\sigma_2 \in H$;
	\item $\sigma_5K = \sigma_5 \sigma_6 K$ with $\sigma_5 \sigma_6 \in H$;
	\item $\sigma_1 \sigma_5 K = \sigma_1 \sigma_2 \sigma_5 \sigma_6 K$ with $\sigma_1\sigma_2, \sigma_5 \sigma_6 \in H$.
\end{itemize}
The $H$-stabiliser of the vertex of $T(\mathcal{J})$ given by $K$ is $H \cap K = \langle \sigma_1\sigma_2, \sigma_5 \sigma_6 \rangle$. Next, notice that there are two $H$-orbits of edges in $T(\mathcal{J})$. Indeed, the edges of $T(\mathcal{J})$ correspond to the hyperplanes of $\mathrm{M}$ labelled by $\sigma_3$ or $\sigma_4$; or equivalently, according to Lemma~\ref{lem:QMhyp}, to the cosets of $E_1:= \langle \sigma_1, \sigma_3, \sigma_5,\sigma_6 \rangle$ and $E_2:= \langle \sigma_1,\sigma_2, \sigma_4, \sigma_6 \rangle$ in $\mathrm{FB}_7$. As previously, we deduce our claim from the following easy observations:
\begin{itemize}
	\item for every $\eta \in \{\sigma_1, \sigma_5, \sigma_1\sigma_5\}$, $\eta E_1 = E_1$ since $\eta \in E_1$;
	\item $\sigma_1 E_2 = E_2$ since $\sigma_1 \in E_2$;
	\item $\sigma_5 E_2 = \sigma_5 \sigma_6 E_2$ with $\sigma_5 \sigma_6 \in H$;
	\item $\sigma_1 \sigma_5 E_2  = \sigma_5 E_2 = \sigma_5 \sigma_6 E_2$ with $\sigma_5 \sigma_6 \in H$.
\end{itemize}
The $H$-stabiliser of the edge of $T(\mathcal{J})$ given by $E_1$ (resp.\ $E_2$) is $H \cap E_1 = \langle \sigma_3, \sigma_5\sigma_6 \rangle$ (resp.\ $H \cap E_2 = \langle \sigma_4, \sigma_1 \sigma_2 \rangle$). We conclude from these properties satisfied by the action of $H$ on $T(\mathcal{J})$ that $H$ decomposes as the following graph of groups:

\begin{center}
\includegraphics[width=0.7\linewidth]{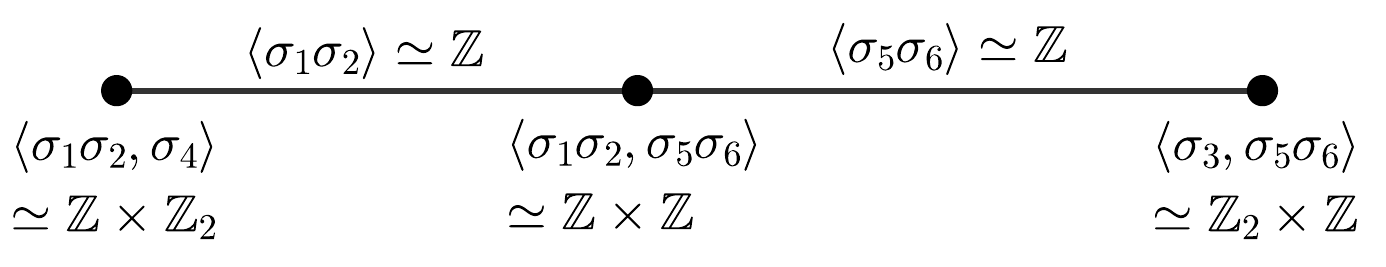}
\end{center}

\noindent
Alternatively, this amounts to describing $H$ as the following graph product:

\begin{center}
\includegraphics[width=0.7\linewidth]{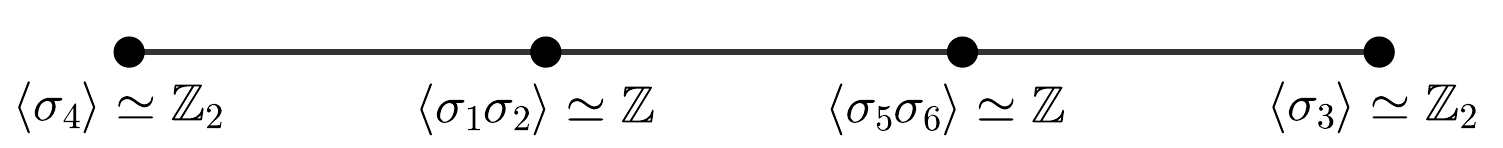}
\end{center}

\noindent
For convenience, we will write $\sigma_{12}:= \sigma_1\sigma_2$ and $\sigma_{56}:= \sigma_5 \sigma_6$. Consider the action of $H$ on the quasi-median graph $\mathrm{QM}$ given by this decomposition, as described in Section~\ref{section:QM}. Let $\mathcal{R}$ be the collection of all the hyperplanes of $\mathrm{QM}$ labelled by $\sigma_{12}$ or $\sigma_{56}$ and let $\Lambda$ denote the subgraph $\langle \sigma_3, \sigma_4 \rangle \subset \mathrm{QM}$. Notice that $\Lambda$ is a connected component of $\mathrm{QM} \backslash \backslash \mathcal{R}$. Of course, since $\langle \sigma_3,\sigma_4 \rangle$ is an infinite dihedral group, $\Lambda$ is just a bi-infinite line; namely 
$$\ldots, \ \sigma_4, \ \sigma_3 \sigma_4, \ \sigma_4\sigma_3, \ \sigma_4, \  1, \ \sigma_3, \ \sigma_3 \sigma_4, \ \sigma_3\sigma_4 \sigma_3, \ \ldots$$
The hyperplanes of $\mathcal{R}$ tangent to $\Lambda$ are the $\langle \sigma_3,\sigma_4\rangle$-translates of $J_{\sigma_{12}}$ and $J_{\sigma_{56}}$. The configuration in $\mathrm{QM}$ is the following:

\begin{center}
\includegraphics[width=\linewidth]{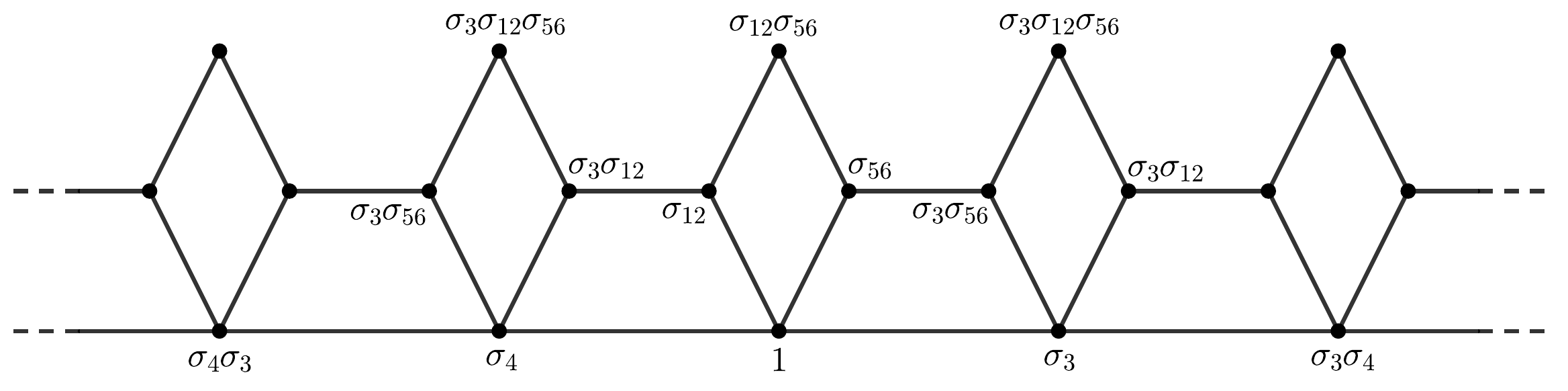}
\end{center}

\noindent
Thus, the crossing graph of the hyperplanes of $\mathcal{R}$ tangent to $\Lambda$ is also a bi-infinite line, and $\langle \sigma_3,\sigma_4 \rangle$ acts on it through two reflections fixing two adjacent vertices. Applying Proposition~\ref{prop:PingPong}, we obtain the decomposition:
$$H = \langle \text{conjugates of } \sigma_1\sigma_2 \text{ and } \sigma_5\sigma_6 \rangle \rtimes \langle \sigma_3,\sigma_4 \rangle \simeq A(\mathbb{L} ) \rtimes \mathbb{D}_\infty$$
where $\mathbb{D}_\infty$ acts on $\mathbb{Z}$ via two reflections (corresponding to $\sigma_3$ and $\sigma_4$) fixing $0$ and $1$ respectively. We conclude that the index-$2$ subgroup $\langle \sigma_1\sigma_2, \sigma_3\sigma_4, \sigma_5\sigma_6 \rangle$ of $H$, which is therefore an index-$8$ subgroup of $\mathrm{FB}_7$, is isomorphic to the index-$2$ subgroup $A(\mathbb{L}) \rtimes 2 \mathbb{Z}$ of the lampraag $A(\mathbb{L}) \rtimes \mathbb{Z}$. 
\end{proof}

\subsection{Some flip manifolds}\label{section:Flip}

\noindent
In this section, our goal is to describe $A(\mathbb{L}) \rtimes \mathbb{Z}$ and $A(P_4)$ as fundamental groups of two flip manifolds $M_1$ and $M_2$, and then to construct a third flip manifold $M_0$ that is a common finite-sheeted cover of $M_1$ and $M_2$. Since $A(\mathbb{L}) \rtimes \mathbb{Z}$ is commensurable to $\mathrm{PFB}_7$ according to Proposition~\ref{prop:CommCartesianGG}, this will allow us to deduce that $\mathrm{PFB}_7$ is commensurable to $A(P_4)$. Here, by a flip manifold, we mean a $3$-manifold (possibly with boundary) obtained by gluing copies of $\mathbb{S}^1 \times \mathbb{S}(b)$, where $\mathbb{S}(b)$ denotes a punctured sphere with $b \geq 1$ boundary components. Two copies of $\mathbb{S}^1 \times \mathbb{S}(b)$ will be always glued along a torus boundary component in such a way that meridians and longitudinals are switched.

\medskip \noindent
Let us begin by describing $A(\mathbb{L}) \rtimes \mathbb{Z}$ as the fundamental group of the flip manifold $M_1$ given by Figure~\ref{MOne}. The fundamental group of $\mathbb{S}^1 \times \mathbb{S}(3)$ can be identified with $\mathbb{Z} \times \mathbb{F}_2 = \langle a,b,c \mid [a,c]=[b,c]=1 \rangle$ where $a$ (resp.\ $b$) corresponds to the red (resp.\ blue) boundary component of $\mathbb{S}(3)$ and where $c$ is given by the $\mathbb{S}^1$-factor. The gluing identifies $c$ with $b$ and $a$ with $c$, so the fundamental group of $M_1$ admits
$$\langle a,b,c,t \mid [a,c]=[b,c]=1, \ tct^{-1}=b, \ tat^{-1}=c \rangle$$
as a presentation, which can be simplified as $\langle a,t \mid [a,tat^{-1}] =1 \rangle$. We conclude from Fact~\ref{fact:LampRaagPresentation} that $A(\mathbb{L}) \rtimes \mathbb{Z}$ is indeed isomorphic to the fundamental group of $M_1$. 
\begin{figure}
\begin{center}
\includegraphics[width=0.4\linewidth]{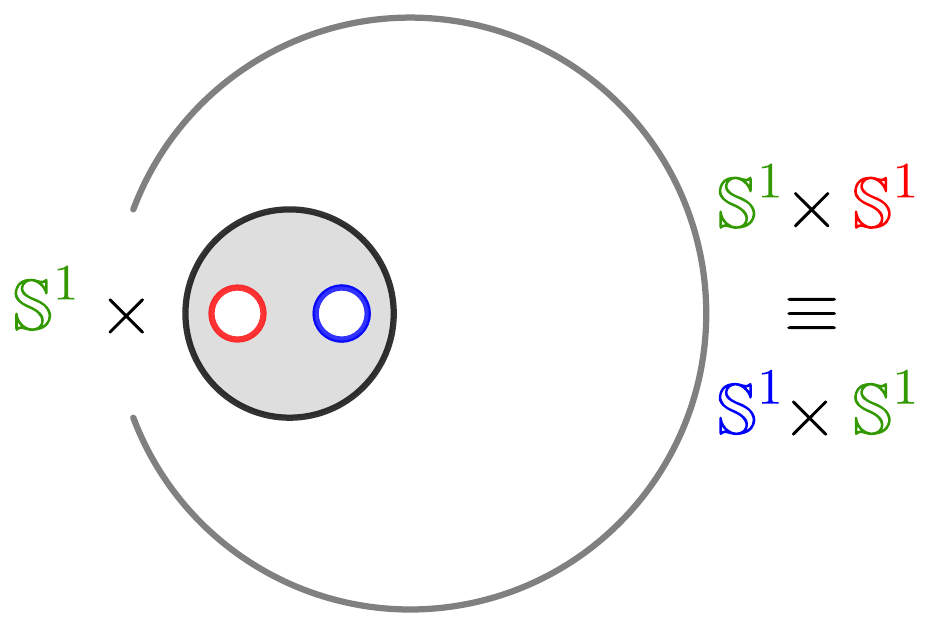}
\caption{The flip manifold $M_1$ whose fundamental group is $A(\mathbb{L}) \rtimes \mathbb{Z}$.}
\label{MOne}
\end{center}
\end{figure}

\medskip \noindent
Next, let us describe $A(P_4)$ as the fundamental group of the flip manifold $M_2$ given by Figure~\ref{MTwo}. The three copies of $\mathbb{S}^1 \times \mathbb{S}(3)$ have fundamental groups isomorphic to $\mathbb{Z} \times \mathbb{F}_2$, say respectively $\langle a_i, b_i,c_i \mid [a_i,c_i]=[b_i,c_i]=1 \rangle$ for $i=1,2,3$ where $a_i$ corresponds to the left (resp.\ right) inner boundary component of $\mathbb{S}(3)$ and where $c_i$ is given by the $\mathbb{S}^1$-factor. The first gluing identifies $c_1$ with $a_2$ and $b_1$ with $c_2$, and the second gluing identifies $c_2$ with $a_3$ and $b_2$ with $c_3$. Therefore, the fundamental group of $M_2$ admits
$$\left\langle \begin{array}{l} a_1,a_2,a_3,b_1, \\ b_2,b_3, c_1,c_2,c_3 \end{array}  \mid \begin{array}{l} [a_1,c_1]=[b_1,c_1]=[a_2,c_2]= [b_2,c_2]=[a_3,c_3]=[b_3,c_3] = 1, \\ c_1=a_2, \ b_1=c_2, \  c_2=a_3, \ b_2=c_3 \end{array} \right\rangle$$
as a presentation, which can be simplified as
$$\langle a_1, a_2,b_2,c_2, b_3  \mid [a_1, a_2]=[a_2,c_2]= [c_2,b_2]= [b_2,b_3]=1 \rangle.$$
This is clearly a presentation of $A(P_4)$, concluding the proof that $A(P_4)$ is indeed isomorphic to the fundamental group of $M_2$.
\begin{figure}
\begin{center}
\includegraphics[width=0.7\linewidth]{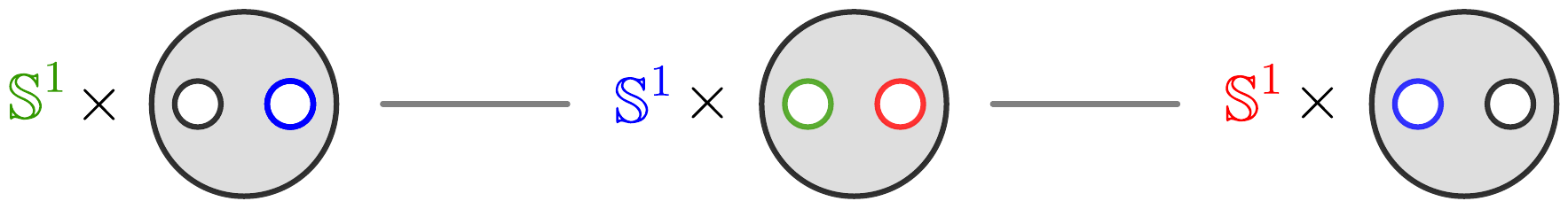}
\caption{The flip manifold $M_2$ whose fundamental group is $A(P_4)$.}
\label{MTwo}
\end{center}
\end{figure}
\begin{figure}
\begin{center}
\includegraphics[width=0.8\linewidth]{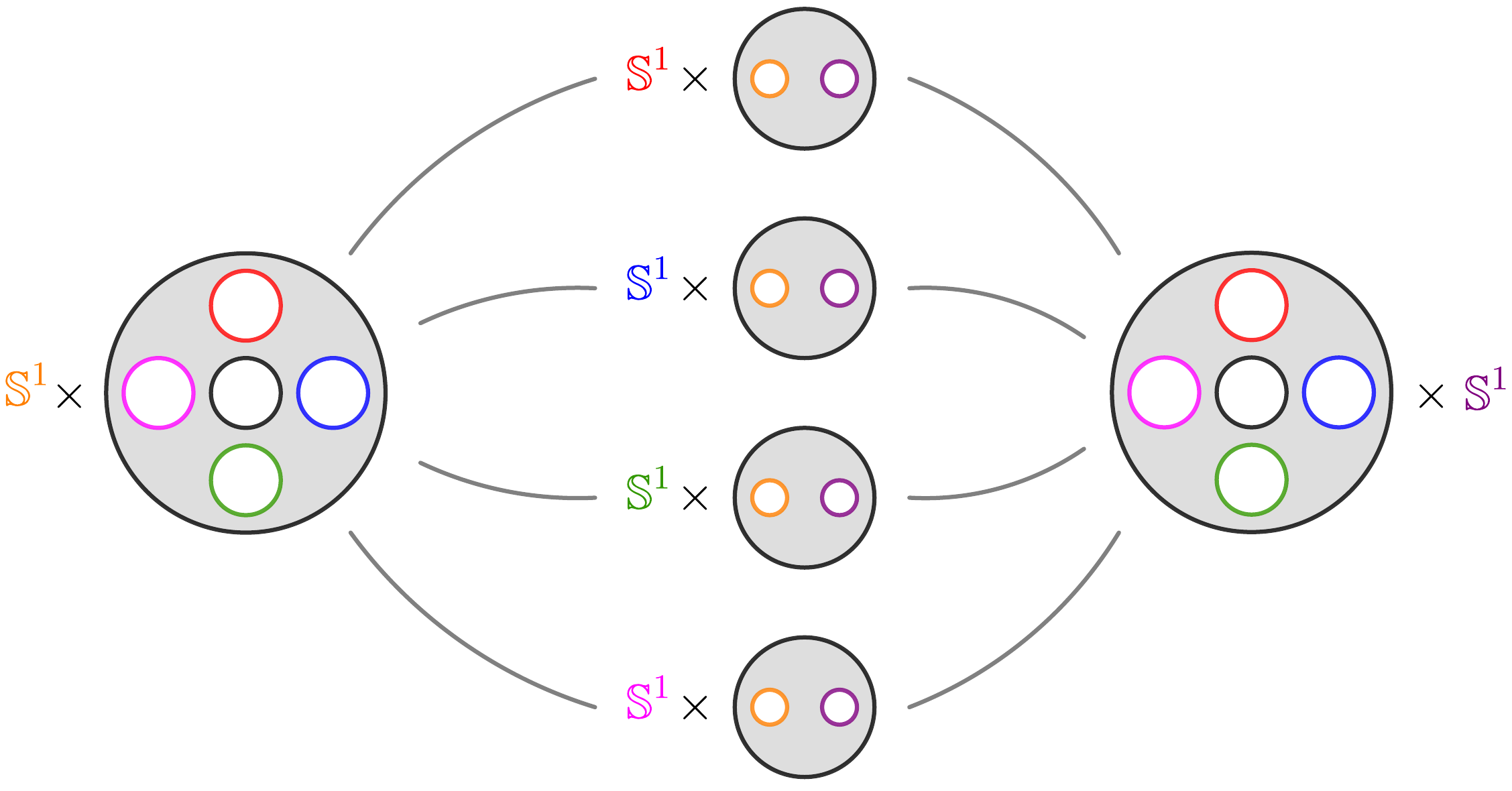}
\caption{The flip manifold $M_0$, a common cover of $M_1$ and $M_2$.}
\label{MZero}
\end{center}
\end{figure}

\medskip \noindent
Finally, let us construct a common finite cover of $M_1$ and $M_2$. Our flip manifold $M_0$ is described by Figure~\ref{MZero}. In order to describe the covering maps $M_0 \to M_1, M_2$, we need two specific covering maps $\alpha, \beta : \mathbb{S}(6) \to \mathbb{S}(3)$. They are respectively described by Figures~\ref{Alpha} and~\ref{Beta}. It is worth noticing that the restriction of $\alpha$ to each boundary component is a $2$-sheeted cover; and that the restriction of $\beta$ to a red (resp.\ blue) boundary component is a $4$-sheeted (resp.\ $1$-sheeted) cover.

\begin{figure}
\begin{center}
\includegraphics[trim = 0 26cm 20.5cm 0, clip, width=\linewidth]{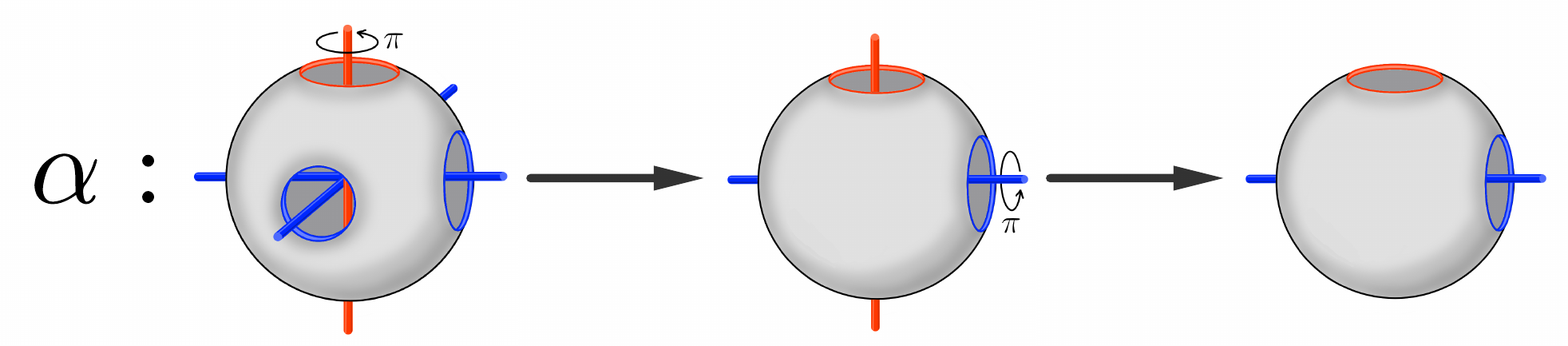}
\caption{The $4$-sheeted covering map $\alpha : \mathbb{S}(6) \to \mathbb{S}(3)$.}
\label{Alpha}
\end{center}
\end{figure}
\begin{figure}
\begin{center}
\includegraphics[trim = 0 26cm 36cm 0, clip, width=0.65\linewidth]{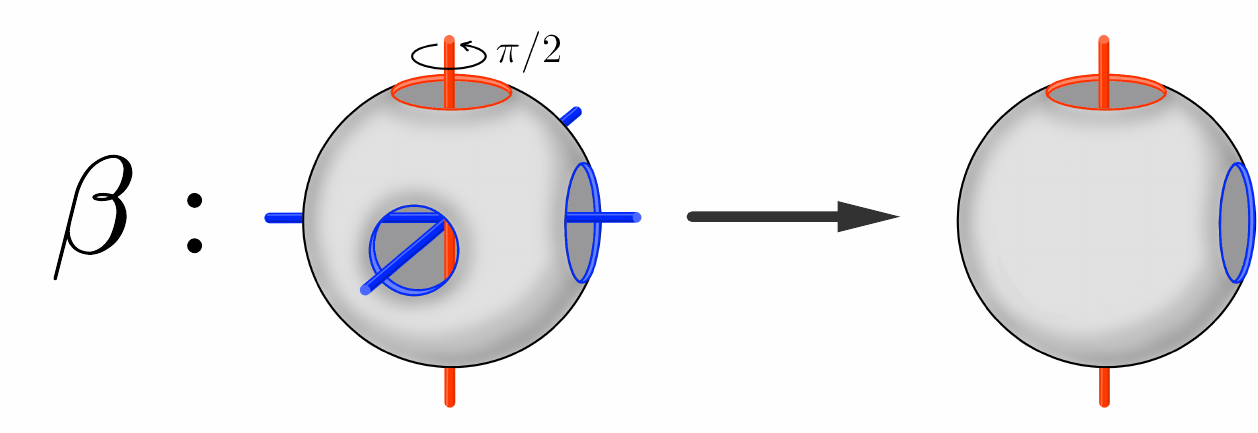}
\caption{The $4$-sheeted covering map $\beta : \mathbb{S}(6) \to \mathbb{S}(3)$.}
\label{Beta}
\end{center}
\end{figure}

\medskip \noindent
The covering map $\mu : M_0 \to M_1$ is defined as follows. The two pieces $\mathbb{S}^1 \times \mathbb{S}(6)$ of $M_0$ are sent to the piece $\mathbb{S}^1 \times \mathbb{S}(3)$ of $M_1$ through $(1, \alpha)$. And the four pieces $\mathbb{S}^1 \times \mathbb{S}(3)$ of $M_0$ are sent to the piece $\mathbb{S}^1 \times \mathbb{S}(3)$ of $M_1$ through $(2, \mathrm{id})$. By construction, these isolated maps are compatible with the gluings defining $M_0$, and we get a $4$-sheeted covering map $\mu : M_0 \to M_1$.

\medskip \noindent
The covering map $\nu : M_0 \to M_2$ is defined as follows. The two pieces $\mathbb{S}^1 \times \mathbb{S}(6)$ of $M_0$ are sent to the left and right pieces $\mathbb{S}^1 \times \mathbb{S}(3)$ of $M_2$ through $(1, \beta)$. And the four pieces $\mathbb{S}^1 \times \mathbb{S}(3)$ of $M_0$ are sent to the middle piece $\mathbb{S}^1 \times \mathbb{S}(3)$ of $M_2$ through $(4,\beta)$. By construction, these isolated maps are compatible with the gluings defining $M_0$, and we get a $4$-sheeted covering map $\nu : M_0 \to M_2$.

\medskip \noindent
Since we have constructed two $4$-sheeted covering maps $M_0 \to M_1, M_2$, it follows that the fundamental groups of $M_1$ and $M_2$ share an isomorphic $4$-index subgroups. Hence:

\begin{prop}
The lampraag $A(\mathbb{L}) \rtimes \mathbb{Z}$ and the right-angled Artin group $A(P_4)$ share an isomorphic $4$-index subgroup.
\end{prop}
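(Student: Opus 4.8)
The plan is to read the statement straight off the two covering maps $\mu \colon M_0 \to M_1$ and $\nu \colon M_0 \to M_2$ constructed above, using nothing beyond the basic correspondence between finite-sheeted coverings and finite-index subgroups. Concretely, I would invoke the standard fact that if $p \colon \tilde{X} \to X$ is an $n$-sheeted covering with $\tilde{X}$ connected, then the induced homomorphism $p_* \colon \pi_1(\tilde{X}) \to \pi_1(X)$ is injective and its image has index $n$; thus $p_*$ realises $\pi_1(\tilde{X})$ as an index-$n$ subgroup of $\pi_1(X)$.

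First I would note that $M_0$ is connected, so that $\pi_1(M_0)$ is a single well-defined group. Applying the fact above to the $4$-sheeted covering $\mu$, the map $\mu_*$ embeds $\pi_1(M_0)$ as an index-$4$ subgroup of $\pi_1(M_1) \simeq A(\mathbb{L}) \rtimes \mathbb{Z}$; applying it to the $4$-sheeted covering $\nu$, the map $\nu_*$ embeds the \emph{same} group $\pi_1(M_0)$ as an index-$4$ subgroup of $\pi_1(M_2) \simeq A(P_4)$. Since both images are isomorphic to $\pi_1(M_0)$, the groups $A(\mathbb{L}) \rtimes \mathbb{Z}$ and $A(P_4)$ contain isomorphic subgroups of index $4$, which is exactly the asserted conclusion.

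The proposition itself is therefore formal; all the substance sits in the construction of $M_0$ and of the two maps preceding the statement. The one point that genuinely needs care --- and which I expect to be the main obstacle --- is checking that the piecewise-defined maps really assemble into honest $4$-sheeted coverings. The delicate issue is the matching of covering degrees across each flip gluing: a flip exchanges the $\mathbb{S}^1$-direction with a boundary curve of the punctured sphere, so the degree that $\alpha$ or $\beta$ induces on a given boundary component on one side of a gluing torus must agree with the degree induced on the corresponding $\mathbb{S}^1$-factor on the other side. This is precisely why the boundary behaviour of $\alpha$ and $\beta$ was recorded above (for instance, $\beta$ being $4$-sheeted on the red boundaries and $1$-sheeted on the blue ones): one must verify torus-by-torus that these degrees are compatible, that the local covers patch to a global map on $M_0$, and that the resulting total degree equals $4$ in each case. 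Once this bookkeeping is in place, the covering-space argument of the previous paragraph finishes the proof immediately.
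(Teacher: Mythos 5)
Your proposal is correct and follows essentially the same route as the paper: the paper likewise deduces the proposition directly from the two $4$-sheeted covering maps $\mu : M_0 \to M_1$ and $\nu : M_0 \to M_2$ via the standard correspondence between connected finite-sheeted covers and finite-index subgroups of the fundamental group. The compatibility check you flag (matching the boundary degrees of $\alpha$ and $\beta$ across each flip gluing) is exactly the content the paper disposes of with its descriptions of $\alpha$ and $\beta$ and the remark that the piecewise maps are compatible with the gluings defining $M_0$.
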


\noindent
Combined with Proposition~\ref{prop:CommCartesianGG}, we conclude that Theorem~\ref{thm:Commensurable} holds.

\addcontentsline{toc}{section}{References}

\bibliographystyle{alpha}
{\footnotesize\bibliography{FlatBraidGroups}}

\Address

%

\end{document}